\numberwithin{equation}{section}
 \newtheorem{thm}{Theorem}[section]
 \newtheorem{cor}[thm]{Corollary}
 \newtheorem{lem}[thm]{Lemma}
 \newtheorem{prop}[thm]{Proposition}
 \theoremstyle{definition}
 \newtheorem{defn}[thm]{Definition}
 \newtheorem{example}[thm]{Example}
 \theoremstyle{remark}
 \newtheorem{rem}[thm]{Remark}
 \numberwithin{equation}{section}
\DeclareMathOperator{\Hom}{Hom} 
\DeclareMathOperator{\Image}{Im}
\DeclareMathOperator{\coker}{Coker}
\DeclareMathOperator{\Ker}{Ker}   
\DeclareMathOperator{\Supp}{Supp} \DeclareMathOperator{\Ass}{Ass}
 \DeclareMathOperator{\beg}{indeg}
\DeclareMathOperator{\Deg}{deg} \DeclareMathOperator{\MAX}{max}
 \DeclareMathOperator{\Ext}{Ext}\DeclareMathOperator{\Dim}{dim}
\DeclareMathOperator{\depth}{depth}
 \DeclareMathOperator{\Ht}{ht}
\DeclareMathOperator{\Reg}{reg} \DeclareMathOperator{\Tot}{Tot}
\DeclareMathOperator{\grade}{grade}
\DeclareMathOperator{\End}{end} \DeclareMathOperator{\END}{end}
 \DeclareMathOperator{\A}{\alpha}
 \DeclareMathOperator{\E}{\textit{E}}
 \DeclareMathOperator{\D}{d}
 \DeclareMathOperator{\SD}{SD}
\DeclareMathOperator{\SDC}{SDC} \DeclareMathOperator{\Var}{V}
\newcommand{\fm}{\mathfrak{m}}
\newcommand{\fp}{\frak{p}}
\newcommand{\fq}{\frak{q}}
\newcommand{\fa}{\frak{a}}
\newcommand{\fb}{\frak{b}}
\newcommand{\ra}{\rightarrow}
\begin{document}

\title[CM residuals and their Castelnuovo-Mumford regularity ]
 { Cohen-Macaulay residual intersections and their Castelnuovo-Mumford Regularity}

\author[S. H. Hassanzadeh]{S. H. Hassanzadeh}
\address{Institut de mathematiques, Universit\'e Pierre et Marie Curie,175
rue du Chevaleret, Bureau 7C8, 75013, Paris, France, and Faculty of
Mathematical Sciences and Computer, Tarbiat Moallem University, 599
Taleghani Ave., Tehran 15618, Iran.}

\email{h\_hassanzadeh@tmu.ac.ir}

\subjclass{13D25,13D45, 13H10}

\keywords{Castelnuovo-Mumford regularity, Generalized Koszul complex, $G_s$ condition,
 Residual intersection, Sliding depth condition  }
\date{May 29, 2009 }

\begin{abstract} In this article we study the structure of residual intersections via constructing a finite complex  which is acyclic under some sliding depth conditions on the cycles of the Koszul complex. This complex provides information on  an ideal which coincides with the residual intersection in the case of geometric residual intersection; and is closely related to it in general. A new success obtained through  studying such a complex is to prove the Cohen-Macaulayness of residual intersections of a wide class of ideals. For example we show that, in a Cohen-Macaulay local ring, any geometric residual intersection of an ideal that satisfies the sliding depth condition is Cohen-Macaulay; this is an affirmative answer to one of the main open questions in the theory of residual intersection, \cite[Question 5.7]{HU}.

The complex we construct also provides a bound for the Castelnuovo-Mumford regularity of a residual intersection  in term of the degrees of the minimal generators. More precisely, in a positively graded Cohen-Macaulay *local ring $ R=\bigoplus _{n \geq 0} R_{n}$, if $J=\fa :I$ is a (geometric) $s$-residual intersection of the ideal $I$ such that $\Ht (I)=g>0$ and  satisfies a sliding depth condition, then  $\Reg(R/J) \leq \Reg( R) + \dim(R_0)+ \sigma( \fa) -(s-g+1)\beg(I/\fa)-s$, where $\sigma( \fa)$ is the sum of the degrees of elements of a minimal generating set of $\fa$. It is also
shown that the equality holds whenever $I$ is a perfect ideal of height 2, and the base ring $R_0$ is a field.
\end{abstract}


\maketitle


\section{Introduction}
 The notion of residual intersection was originally introduced by Artin and Nagata
\cite{AN}; it has been extensively studied by Huneke, Ulrich and others. Throughout the
paper $R$ is a  Noetherian (graded) ring. Let $I$ be an (graded) ideal of
height $g$ in the local (*local) $R$, and let $s \geq g$ be an integer; an
$s$-residual intersection of $I$ is an ideal $J$ such that $J=\fa
:I$ for some (graded) ideal $\fa \subseteq I$ with $htJ \geq s
\geq \mu(\fa)$ ($\mu$ denoting minimal number of generators).
In the case where $R$ is Gorenstein and $I$ is unmixed, this notion generalizes the concept of linkage where in that
sense the ideals $I$ and $J$ have the same height.  Two important examples of residual
intersections which also demonstrate the ubiquity of such
ideals are as follows  (these examples are given in
\cite[4.1-4.3]{H}): The ideal defined by the maximal minors of a
generic $s$ by $r$ matrix with $r<s$ is an $(s-r+1)$-residual
intersection of the ideal defined by the maximal minors of a
generic $s\times(s+1)$ matrix, which is  a perfect ideal
of height 2. As another example, suppose that $R$ is CM and $I$
is an ideal of positive height that satisfies $G_\infty$, then
the defining ideal of the extended symmetric algebra of I is a
residual intersection. We refer the reader to \cite{H} for more
information.

 The Cohen-Macaulay (from now on, abbreviated by CM) property and the canonical
module of residual intersections was carefully investigated in several works such as \cite{C}, \cite{H}, \cite{HU}, \cite{U}, .... Most of these works deeply
applied a crucial lemma of Artin and Nagata \cite[lemma 2.3]{AN},
this lemma provides an inductive argument to reduce the problem
in residual intersection to that in linkage. One of
the most important condition required for this lemma, or similar
results,  is the $G_s$ condition which bounds the local number
of generators of an ideal; precisely, we say that an
ideal $I$ satisfies $G_s$ condition, if $\mu(I_{\fp})\leq \Ht (\fp)$ for all prime
ideal $\fp$ containing $I$ such that $\Ht (\fp) \leq s-1$; $I$
satisfies $G_{\infty}$, if $I$ satisfies $G_s$ for all $s$. Other conditions which are required to obtain the mentioned properties are
some depth conditions on Koszul homology modules of $I$, such as
strongly Cohen-Macaulay (SCM) and Sliding depth condition, (SD). An explicit
resolution for the residual intersection is only known in special cases. It involves generalized Koszul
complexes and approximation complexes; see for example \cite{BKM}and \cite{KU}.

The interplay between  residual intersections and some arithmetic
subjects in commutative algebra, such as \cite{HH},  \cite{PU}, \cite{v1}, ...,
 is at the origin of  a lot of attempts to weaken the conditions which imply
  some arithmetic properties of residual intersections such as Cohen-Macaulayness.
 In spite of considerable progress in this way, the main challenge in
 the theory of residual intersection is to remove the $G_s$ condition.
 As C.Huneke and B. Ulrich mentioned in their paper \cite[Question 5.7]{HU},
 the main  open question is the following:
 \begin{quote}
 \em Suppose that $R$ is a local CM ring and $I$ is
 an ideal of $R$ which is SCM (or even has sliding depth). Let $J$  be any residual intersection of $I$. Then is $R/J$  CM ?
 \end{quote}

One of the main purposes of this paper is to answer this question, affirmatively.
The idea is that we construct a finite complex $\mathcal{C}_\bullet$ whose tail consists of free modules and
 whose beginning terms are finite direct sums of  cycles of
the Koszul complex. It is shown in Proposition \ref{p16} that this complex is acyclic
under some sliding depth conditions on  cycles of the Koszul
complex. This condition is precisely defined in \ref{d13} with the abbreviated form SDC$_k$
 for some integer $k$. We then provide some s conditions  which imply the $\SDC_k$
 condition. On the way, in Proposition \ref{p15}, we completely determine the local cohomology modules (and consequently  clarify the depth) of the last  cycle of the Koszul complex which does not coincide with the boarder. This result fairly improve a proposition of Herzog, Vasconcelos and Villareal \cite[1.1]{HVV}. This investigation ensures that in the cases where the residual intersection is close to the linkage, namely when $s-g\leq2$, the complex $\mathcal{C}_\bullet$ is acyclic without any assumption on $I$; see  Corollary \ref{c17}. Some importance of $s$-residual intersections which are close to  linkage is due to the fact that these ideals contains a class of ideals whose  Rees algebra is CM; see for example \cite{HH} and \cite{U}. The ideal which is resolved by $\mathcal{C}_\bullet$, say $K$, is quite close to the residual
intersection; indeed in Theorem \ref{t19} it is shown that $K$ is always contained in $J$ and has the same radical as $J$. Moreover,
if $I$ satisfies the  sliding the depth condition ${\rm SDC}_1$ then $K$ is CM.
Therefore, the affirmative answer to the above mentioned question is in the case where $K=J$. It is shown in Theorem \ref{t19}(iv) that if $I/\fa$ is generated by at most one element locally in height $s$ then $K=J$. In particular, if the residual is geometric, see Corollary \ref{c111}.

Having  an approximation complex for the residual intersection in hand,
 we establish a bound for the Castelnuovo-Mumford regularity of
residual intersections in terms of the degrees of their
defining ideals. Determining this bound needs several careful
studies of the degrees and the maps of $\mathcal{C}_\bullet$.
More precisely, it is shown in Theorem \ref{t22} that, in a positively graded
Cohen-Macaulay *local ring, $ R=\bigoplus _{n \geq 0} R_{n}$ which admits a graded canonical module, if $J=\fa :I$
is an  $s$-residual intersection of the ideal $I$ such that $\Ht (I)=g>0$, $I/\fa$ is generated by at most one element locally in height $s$ and  that $I$ satisfies the $\SD_1$ condition, then
\begin{center}
$\Reg(R/J) \leq \Reg( R) + \dim(R_0)+ \sigma( \fa) -(s-g+1)\beg(I/\fa)-s$.
\end{center}
This formula generalizes the previous known facts about the regularity of linked ideals.
In the course of the proof of Theorem \ref{t22}, we need to know  the relation between the ordinary Castelnuovo-Mumford regularity of a finitely generated graded $R$-module and another invariant which we call regularity with respect to the maximal ideal, $\Reg _{\fm}(M)=\max\{\End(H^i_{\fm}(M))+i\}$. In Proposition \ref{pregm} we show that $\Reg(M)\leq \Reg _{\fm}(M)\leq \Reg(M)+\Dim(R_0)$ which generalizes previous results of Hyry \cite{Hy} and Trung \cite{T}. This proposition enables us to state the above formula for the regularity of residual intersection without any restriction on the dimension of $R_0$.

In the presence of the $G_s$ condition, in Lemma \ref{Lulrich}, we prove a graded version of the crucial lemma of Artin and Nagata. With  the aid of this lemma, under the condition $G_s$, if $R$ is Gorenstein and $R_0$ is an Artinian local ring with infinite residue field, the graded structure  of the canonical module of residual intersection is determined  in Proposition \ref{PA-N}, due to the work of Huneke and Ulrich \cite[2.3]{HU}. In this situation, the upper bound obtained for the regularity of  residual intersection is given by  the same formula as in the general case and moreover we can give a criteria to decide when the regularity actually attains the proposed upper bound  in Theorem \ref{t22}.

Finally, in the last section, we consider the residual intersection of perfect ideals of height 2. As it is known,  the Eagon-Northcott complex  provides a free resolution for the residual intersection in this case. Using this resolution, in Theorem \ref{T31}, we exactly determine the Castelnuovo-Mumford regularity of residual intersection of perfect ideals of height 2, whenever the base ring $R_0$ is a field. It is shown that the above formula for the regularity is in fact an equality in this case.

Some of the straightforward verifications which are omitted in the proofs can be found in the Ph.D. thesis of the second author \cite{HA}.


\section{Residual Intersection Without The $G_s$ Condition }

Throughout this section $R$ is a Noetherian ring (of dimension
$d$), $I=(f_1,\cdots,f_r)$ is an ideal of grade $g\geq1$,
$\fa=(l_1,\cdots,l_s)$ is an ideal contained in $I$, $s\geq g$, $
J=\fa:_{R}I$, and $S=R[T_1,\cdots,T_r]$ is a polynomial extension
of $R$ with indeterminates $T_i$'s. We denote the symmetric
algebra of $I$ over $R$ by $\mathcal{S}_I$ and consider
$\mathcal{S}_I$ as an $S$-algebra via the ring homomorphism
$S\rightarrow \mathcal{S}_I$ sending $T_i$ to  $f_i$ as an
element of  $(\mathcal{S}_I)_1=I$. Let $\{
\gamma_1,\cdots,\gamma_s\}\subseteq S_1$ be linear forms whose
images under the above homomorphism are $l_i \in (\mathcal{S}_I)_1$, $(\gamma)$ be the $S$-ideal they generate and
$\frak{g}=(T_1,\cdots,T_r)$. For a sequence of elements $\frak{x}$
in a commutative ring $A$ and an $A$-module $M$, we denote the
koszul complex by $K_\bullet(\frak{x};M)$, it's cycles
$Z_i(\frak{x};M)$ and homologies by $H_i(\frak{x};M)$. For a graded
module $M$, $\beg(M):=\inf\{i : M_i\neq 0\}$ and
$\End(M):=\sup\{i : M_i\neq 0\}$. Setting $\deg(T_i)=1$ for all
$i$, $S$ is a  standard graded over $S_0=R$.

To set one more convention, when we draw the picture of a double
complex obtained from a tensor product of two finite complexes
(in the sense of \cite[2.7.1]{W}), say $\mathcal{A}\bigotimes
\mathcal{B}$; we always put $\mathcal{A}$ in the vertical
direction and $\mathcal{B}$ in the horizontal one. We also label
the module which is  in the up-right corner by $(0,0)$ and
consider the labels for the rest, as the points in the
third-quadrant.

Now, consider the koszul complex
 \begin{center}$K_\bullet(f;R):
0\rightarrow K_r \xrightarrow{\delta_{r}^{f}} K_{r-1}
\xrightarrow{\delta_{r-1}^f} \cdots \rightarrow K_0\rightarrow 0.$
\end{center} Let $Z_i=Z_i(f;R)$.
The $\mathcal{Z}$-complex of $I$ with coefficients in $R$ is
\begin{center}
$\mathcal{Z}_\bullet=\mathcal{Z}_\bullet(f;R): 0\rightarrow
Z_{r-1}\bigotimes_{R}S(-r+1) \xrightarrow{\delta_{r-1}^T} \cdots
\rightarrow Z_1\bigotimes_{R}S(-1) \xrightarrow{\delta_{1}^T}
Z_0\bigotimes_{R}S\rightarrow 0.$
\end{center}
Recall that $Z_r=0$, $H_0(\mathcal{Z}_\bullet)=\mathcal{S}_I$ and
$H_i(\mathcal{Z}_\bullet)$ is finitely generated
$\mathcal{S}_I$-module for all $i$, \cite[4.3]{HSV}.

In order to make our future structures and computations more
transparent, we need to add some intricacies to the
$\mathcal{Z}$-complex.

For $i \geq r-g+1$, the tail of the koszul complex
$K_\bullet(f;R)$ resolves $Z_i$. Now, we construct our first
double complex $\mathcal{F}$ with $\mathcal{F}_{-i,-j}=
K_{r-j+i}\bigotimes_{R}S(-i-r+g-1)$ for  $0 \leq i \leq g-2
\text{~~and~~ } 0 \leq j\leq g-i-2$. $\mathcal{F}$ is a
truncation of $K_\bullet(f;R) \bigotimes_R K_\bullet(T;S)$:
$(\delta:=-r+g-1)$
$$
 \xymatrix{
         &                           &                                                                               &             &  0 \ar[d]                        \\
         &                           &                                                                               &0\ar[r]      &K_{r}\otimes S(\delta) \ar[d]     \\
         &                           &  0 \ar[d]                                                                     &      &\vdots \ar[d]                     \\
         &  0 \ar[r] \ar[d]          & K_r\otimes S(-r+2)\ar[r]^(.7){\partial_{r}'} \ar^{\delta_{r}^f \otimes Id}[d] &\cdots\ar[r] &K_{r-g+3}\otimes S(\delta) \ar[d] \\
 0 \ar[r]&  K_r\otimes S(-r+1) \ar[r]& K_{r-1}\otimes S(-r+2) \ar[r]                                                 &\cdots\ar[r] &K_{r-g+2}\otimes S(\delta)        \\
}
$$
The complex $\mathcal{F}$ is a double complex of free $S$-modules which maps
vertically onto the tail of  $\mathcal{Z}_\bullet$. So that if we
replace the last $g$ modules  of $\mathcal{Z}_\bullet$ by
$\Tot(\mathcal{F})$, with the composition map
$K_{r-g+2}\bigotimes_{R} S(-r+g-1)\xrightarrow{\delta_{r-g+2}^f
\otimes Id} Z_{r-g+1}\bigotimes_{R}
S(-r+g-1)\xrightarrow{\delta_{r-g+1}^T} Z_{r-g}\bigotimes_{R}
S(-r+g)$, then we have a modified $\mathcal{Z}$-complex, say
$\mathcal{Z}'_\bullet$, which has the same homologies as
$\mathcal{Z}_\bullet$, see \cite{HA}, while its tail consists of  free
$S$-modules. Precisely,
\begin{center}
$\mathcal{Z}'_\bullet:= 0 \rightarrow \mathcal{Z}'_{r-1}
\rightarrow \cdots \rightarrow \mathcal{Z}'_{0}
 \rightarrow 0. $
\end{center}
where
$$
\mathcal{Z}'_{i}= \left\lbrace
           \begin{array}{c l}
               K_{i+1}\bigotimes_{R}(\bigoplus_{t=r-i}^{g-1}S(-r-t))  & \text{if $i\geq r-g+1$,}\\
              \mathcal{Z}_i                 & \text{otherwise}.
           \end{array}
         \right.
$$
 Now consider the double complex $\mathcal{E}:=\mathcal{Z'}_\bullet
\bigotimes_S K_\bullet(\gamma;S)$.
Denote $\mathcal{D}_\bullet:=\Tot(\mathcal{E})$ as the following complex,
\begin{center}
$\mathcal{D}_\bullet: 0\rightarrow D_{r+s-1} \rightarrow \cdots
\rightarrow D_1 \rightarrow D_0\rightarrow 0.$
\end{center}
Then $H_0(\mathcal{D}_\bullet)=\mathcal{S}_I/(\gamma)\mathcal{S}_I$
and for all $0 \leq i \leq r+s-1$, the biggest $i$ such that $S(-i)$ appears in the summands of
$D_i$ is $i$, moreover
$$\beg (D_i)=\left\lbrace
           \begin{array}{c l}
              i            & 0\leq i \leq r-g,\\
           r-g+1           & r-g+1\leq i \leq r-1,\\
           i-g+2           & r \leq i \leq  r+s-1.
           \end{array}
         \right.
$$

At the moment, we want to study the properties of the complex
$\mathcal{D}_\bullet$. We shall sometimes use the following lemma.

\begin{lem}\label{l11}
Let $M$ be an $R$-module. Then
\begin{enumerate}
\item[(i)]$H_{\frak{g}}^{i}(M\bigotimes_{R}S)=0$ for all $i\neq
r$,
\item[(ii)]there exists a functorial isomorphism
$\theta_M:H_{\frak{g}}^{r}(M\bigotimes_{R}S)\rightarrow
M\bigotimes_{R}H_{\frak{g}}^{r}(S).$
\end{enumerate}
\end{lem}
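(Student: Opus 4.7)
The plan is to compute $H^{\bullet}_{\frak{g}}(M\otimes_R S)$ via the \v{C}ech complex $\check{C}^\bullet:=\check{C}^\bullet(T_1,\ldots,T_r;S)$, whose $k$-th term is $\bigoplus_{i_1<\cdots<i_k} S_{T_{i_1}\cdots T_{i_k}}$. Since each localization $S_{T_{i_1}\cdots T_{i_k}}$ of the polynomial ring $S=R[T_1,\ldots,T_r]$ is a free, hence flat, $R$-module, I would first identify the \v{C}ech complex computing $H^\bullet_{\frak{g}}(M\otimes_R S)$ with $\check{C}^\bullet\otimes_R M$. Thus it suffices to determine the cohomology of $\check{C}^\bullet\otimes_R M$ and trace the resulting isomorphism.

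Next I would exploit the fact that $T_1,\ldots,T_r$ form an $S$-regular sequence, so that the cohomology of $\check{C}^\bullet$ itself is concentrated in degree $r$ and equal to $H^r_{\frak{g}}(S)$. A direct calculation identifies $H^r_{\frak{g}}(S)$ with the $R$-module spanned by the ``purely negative'' monomials $T_1^{-b_1}\cdots T_r^{-b_r}$ with $b_i\geq 1$, which is visibly a free $R$-module. Consequently the natural surjection $\check{C}^r\twoheadrightarrow H^r(\check{C}^\bullet)$ induces a quasi-isomorphism $\check{C}^\bullet\to H^r_{\frak{g}}(S)[-r]$ between bounded complexes of flat $R$-modules.

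Finally, invoking the standard principle that a quasi-isomorphism between bounded complexes of flat modules remains a quasi-isomorphism after tensoring with any module, I would conclude that $\check{C}^\bullet\otimes_R M\to (H^r_{\frak{g}}(S)\otimes_R M)[-r]$ is a quasi-isomorphism. Taking cohomology immediately yields both the vanishing in (i) and the isomorphism $\theta_M$ of (ii); functoriality in $M$ is transparent from this construction. The only delicate point is verifying the flatness inputs (flatness of the \v{C}ech terms and of $H^r_{\frak{g}}(S)$ over $R$) so as to apply the preservation-of-quasi-isomorphism lemma; beyond this routine bookkeeping I anticipate no serious obstacle.
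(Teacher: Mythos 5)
Your argument is correct and follows essentially the same route as the paper's, namely computing $H^\bullet_{\frak g}(-)$ via the \v{C}ech complex on $T_1,\dots,T_r$ and exploiting flatness over $R$ of its terms. The paper derives (i) directly from the observation that $T_1,\dots,T_r$ is a regular sequence on $M\otimes_R S=M[T_1,\dots,T_r]$, whereas you obtain (i) and (ii) in one stroke from the quasi-isomorphism $\check{C}^\bullet\to H^r_{\frak g}(S)[-r]$ between bounded complexes of flat $R$-modules; the two are logically equivalent, and yours is a complete write-up of the paper's terse sketch.
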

\begin{proof}(see\cite[2.1.11]{G}) The proof goes along the same line as in the case
$M=R$. (i) follows from the fact that $T_1, \cdots, T_r$ is a
regular sequence on $M\bigotimes_{R}S$ and (ii) from the
computation of $H_{\frak{g}}^{r}(-)$ via the \v{C}ech complex on $T_1,
\cdots, T_r $.
\end{proof}
The above lemma implies that $H_{\frak{g}}^{j}(D_i)=0$ if $j\neq r$ and $\End(H_{\frak{g}}^{r}(D_i))\leq
-r+i$ for all $i$. In particular,  $H_{\frak{g}}^{r}(D_i)_0=0$ for
all $i\leq r-1$. In the spirit of  \cite[3.2(iv)]{CU} we
introduce the complex $\mathcal{Z}_\bullet^{+}$ of $R$-modules,
\begin{center}
$\mathcal{Z}_\bullet^{+}:=H_{\frak{g}}^{r}(\mathcal{D}_\bullet)_0:
0\rightarrow Z_{r-1}^+ \rightarrow \cdots \rightarrow
Z_{r-s+1}^+\xrightarrow{\varphi_0} Z_{r-s}^+\rightarrow 0.$
\end{center}
Notice that $\mathcal{Z}_i^{+}=Z_{r-s+i}^{+}$. By  Lemma \ref{l11}, for $j\geq r-g+1, Z_j^+$ is a free $R$-module
and for $j\leq r-g, Z_j^+$ is a direct sum of finitely many
copies of some elements of the set
$\{Z_{max\{j,0\}},\cdots,Z_{r-1}\}$.

M.Chardin and B.Ulrich \cite[3.2]{CU} show that under some
conditions on $I$ and $\fa$ the only non-zero homology of this
complex is $\coker \varphi_0 \cong \fa:I$. Our aim in this
section is to extend their result by removing almost all of the
conditions imposed on $I$ and $\fa$ to obtain a sufficient
condition for the acyclicity of $\mathcal{Z}_\bullet^{+}$ and to
determine the structure of $\coker \varphi_0$.
Achieving this aim  sheds some light on the structure of residual
intersections.
 The next lemma is a key to our aim.

\begin{lem}\label{l12}
If $I=\fa$, the only non-zero homology of  $\mathcal{Z}_\bullet^+$
is $\coker \varphi_0 \cong R$.
\begin{proof} Let $\mathcal{C}_{\frak{g}}^{\bullet}(S)$
 be the \v{C}ech complex associated to $\frak{g}$ and $S$. Consider
the double complex
$\mathcal{G}:=\mathcal{C}_{\frak{g}}^{\bullet}(S)\bigotimes_S
\mathcal{D}_\bullet$. By Lemma \ref{l11}, all of the vertical
homologies except those in the base row vanish, therefore
\begin{center}
$\sideset{^1}{_{\text\footnotesize{ver}}^{}}{\E}: 0\rightarrow
H_{\frak{g}}^r(D_{r+s-1}) \rightarrow \cdots \rightarrow
H_{\frak{g}}^r(D_{r+1}) \xrightarrow{\varphi}
H_{\frak{g}}^r(D_r)\rightarrow\cdots\rightarrow
H_{\frak{g}}^r(D_0)\rightarrow 0.$
\end{center}
By definition
$(\sideset{^1}{_{\text\footnotesize{ver}}^{}}{\E})_0=\mathcal{Z}_\bullet^+$.

Now, we return to the (third-quadrant) double complex
$\mathcal{E}$ with $\mathcal{D}_\bullet:=\Tot(\mathcal{E})$, in the case where $I=\fa$. The  vertical spectral
sequence arising from $\mathcal{E}$ at point $(-i,-j)$ has as the
first term $H_j(\mathcal{Z}'_\bullet\bigotimes_S
\bigwedge^{i} S(-1)^s) \cong H_j(\mathcal{Z}_\bullet) \bigotimes_S
\bigwedge^{i} S(-1)^s$. As $H_j(\mathcal{Z}_\bullet)$ is an
$\mathcal{S}_I$-module, it then follows that
$H_i(\mathcal{D}_\bullet)$, for all $i$, is annihilated by a
power of $L= \ker(S \rightarrow \mathcal{S}_I)$. Since $I=\fa$,
$\frak{g}=\frak{g}+L=(\gamma)+L$, hence
$H_{\frak{g}}^j(H_i(\mathcal{D}_\bullet))=H_{(\gamma)}^j(H_i(\mathcal{D}_\bullet))$
for all $i$ and $j$. On the other hand, the  horizontal spectral
sequence (arising from $\mathcal{E}$) at the point $(-i,-j)$ has
as the first term $H_i((\gamma);\mathcal{Z}'_j)$ which
is annihilated by $(\gamma)$. Therefore, the convergence of the
horizontal spectral sequence to the homology modules of
$\mathcal{D}_\bullet$,
implies that $H_i(\mathcal{D}_\bullet)$ is annihilated by some
powers of $(\gamma)$, for all $i$. Hence
$H_{\frak{g}}^j(H_i(\mathcal{D}_\bullet))=H_{(\gamma)}^j(H_i(\mathcal{D}_\bullet))=0$
for all $j\geq1$ and all $i$. Moreover we have,
$\beg(H_{\frak{g}}^0(H_i(\mathcal{D}_\bullet)))\geq \beg(D_i)\geq
1 $ for $i \geq 1$.

Summing up the above paragraph the second horizontal spectral
sequence associated to $\mathcal{G}$ is:
  $$
(\sideset{^{2}}{_{\text\footnotesize{hor}}^{-i,-j}}{\E})_0=H^j_{\frak{g}}(H_i(\mathcal{D}_\bullet))_0=
\left\lbrace
           \begin{array}{c l}
              H^{0}_{(\gamma)}(H_0(\mathcal{D}_\bullet))_0    & \text{if $i=j=0$,}\\
              0                                             & \text{otherwise}.
           \end{array}
         \right.
$$
Now the acyclicity of $\mathcal{Z}_\bullet^+$ and the identification $\coker\varphi_0 \cong H_{\gamma}^0(\mathcal{S}_I/(\gamma)\mathcal{S}_I)_0=(\mathcal{S}_I/(\gamma)\mathcal{S}_I)_0=R$ comes from the fact that $\sideset{^{2}}{_{\text\footnotesize{ver}}^{-i,-j}}{\E} = \sideset{^{\infty}}{_{\text\footnotesize{hor}}^{-i,-j}}{\E}$ for all $i,j$ and the above computation for
$(\sideset{^{2}}{_{\text\footnotesize{hor}}^{-i,-j}}{\E})_0$.

\end{proof}
\end{lem}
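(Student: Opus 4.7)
The plan is to analyze the double complex $\mathcal{G} := \mathcal{C}^\bullet_{\frak{g}}(S)\otimes_S \mathcal{D}_\bullet$ via its two spectral sequences and compare the abutments. On the vertical side, Lemma \ref{l11} forces $H^i_{\frak{g}}(D_j)=0$ for $i\neq r$, so the vertical spectral sequence collapses at the $E^1$ page to the single row $H^r_{\frak{g}}(\mathcal{D}_\bullet)$; restricting to degree zero gives precisely $\mathcal{Z}_\bullet^+$ by definition. The lemma therefore reduces to showing that the degree-zero strand of the horizontal spectral sequence of $\mathcal{G}$ collapses at the corner to a single copy of $R$.

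Before running that horizontal spectral sequence, I would first control $H_i(\mathcal{D}_\bullet)$ itself by running the two spectral sequences of $\mathcal{E}=\mathcal{Z}'_\bullet\otimes_S K_\bullet(\gamma;S)$. Its vertical $E^1$-term at $(-i,-j)$ is $H_j(\mathcal{Z}_\bullet)\otimes_S \bigwedge^i S(-1)^s$, an $\mathcal{S}_I$-module; convergence then shows that $H_i(\mathcal{D}_\bullet)$ is annihilated by a power of $L := \Ker(S\to \mathcal{S}_I)$. The horizontal $E^1$ of $\mathcal{E}$ is Koszul homology on $\gamma$, hence annihilated by $(\gamma)$, so $H_i(\mathcal{D}_\bullet)$ is also annihilated by a power of $(\gamma)$.

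Now the hypothesis $I = \fa$ enters crucially: the $\gamma_i$ reduce mod $L$ to generators of $I=\fa$, so $\frak{g} \subseteq (\gamma) + L$, and the two annihilator statements combine to give $H^j_{\frak{g}}(H_i(\mathcal{D}_\bullet))=H^j_{(\gamma)}(H_i(\mathcal{D}_\bullet))=0$ for every $j\geq 1$. For $j=0$ and $i\geq 1$, the estimate $\beg(D_i)\geq 1$ recorded in the construction of $\mathcal{D}_\bullet$ forces the degree-zero part $H^0_{\frak{g}}(H_i(\mathcal{D}_\bullet))_0$ to vanish. Only the corner $(i,j)=(0,0)$ then survives in degree zero on the horizontal side of $\mathcal{G}$, contributing $(H_0(\mathcal{D}_\bullet))_0 = (\mathcal{S}_I/(\gamma)\mathcal{S}_I)_0 = R$, where the final identification again uses $I=\fa$. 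Comparing with the vertical side identifies $\coker\varphi_0$ with $R$ and forces the higher homologies of $\mathcal{Z}_\bullet^+$ to vanish.

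The main obstacle I anticipate is the careful bookkeeping across three nested spectral-sequence arguments and the graded shifts involved: specifically, justifying that the single supported diagonal of the horizontal spectral sequence on the degree-zero strand matches, with no extension ambiguity, the single surviving row of the vertical one, so that the homology of $\mathcal{Z}_\bullet^+$ is pinned down to be $R$ concentrated in homological degree zero.
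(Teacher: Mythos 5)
Your proposal follows essentially the same route as the paper: collapse the vertical spectral sequence of $\mathcal{G}=\mathcal{C}^\bullet_{\frak{g}}(S)\otimes_S\mathcal{D}_\bullet$ to identify its degree-zero strand with $\mathcal{Z}^+_\bullet$, then use the two spectral sequences of $\mathcal{E}$ to show each $H_i(\mathcal{D}_\bullet)$ is both $L$-torsion and $(\gamma)$-torsion, combine this with $I=\fa$ (so $\frak{g}\subseteq(\gamma)+L$) to kill all higher $\frak{g}$-cohomology of the homology modules, and finally use $\beg(D_i)\geq1$ for $i\geq1$ to leave only the corner contribution $R$ in degree zero. The proposal is correct; the only cosmetic difference is that you state the containment $\frak{g}\subseteq(\gamma)+L$ where the paper records the equality $\frak{g}=(\gamma)+L$, and either suffices for the local-cohomology comparison.
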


The concept of the sliding depth condition $\SD$  first appeared
in the study of the acyclicity of some approximation complexes by Herzog, Simis and Vasconcelos in
\cite{HSV}. This concept was then formally defined by the same
authors in \cite{HSV2}. Let $k$ and $t$ be two integers, we say that the
ideal $I$   satisfies $\SD_k$ at level $t$, if
$\depth(H_i(f;R))\geq \min\{d-g,d-r+i+k\}$ for all $i\geq r-g-t$ (whenever
$t=r-g$ we simply say that $I$ satisfies $\SD_k$, also $\SD$ stands for
 $\SD_0$). However, for our purposes in this section, we
need a slightly weaker condition than the sliding depth condition.
\begin{defn} \label{d13}
Let $k$ and $t$ be two integers. We say that $I$ satisfies the
sliding depth condition on cycles $\SDC_k$ at level $t$, if
$\depth(Z_i)\geq \min\{d-r+i+k, d-g+2, d\}$ for all $r-g-t\leq i\leq r-g$.
\end{defn}

\begin{rem}\label{r14} We make several observations about the
elementary properties of the condition $\SDC$ in the case where $R$ is a CM local ring (see \cite{HA} for some details).
\begin{enumerate}
\item[(i)]The property $\SDC_k$ at
level $t$ localizes and it depends only on $I$, \cite{V}.

\item[(ii)] $\SD_k$  implies $\SDC_{k+1}$, see Proposition \ref{p21} .

\item[(iii)] Whenever $\depth (R)\geq 2$, $\depth (Z_i(f;R))\geq 2$
for all $i$. Furthermore, if $I \neq R$, for all $r-1\geq i\geq r-g+1$, $Z_i$ is a module
of finite projective dimension $r-i-1$. Hence,
$\depth (Z_i)=d-(r-i-1)=d-r+i+1$, for all $r-1\geq i\geq r-g+1$.

\item[(iv)]If $\depth(\Ext _R^i(R/I,R))\geq d-i-1$
for all $i \geq g+1$, for example if $R$ is Gorenstein and $I$ is
CM, then it is not difficult to deduce that $H_{r-g}(f;R)$ is CM
of dimension $d-g$. In this case one can see from  the exact
sequence $0\rightarrow B_{r-g}(f;R)\rightarrow Z_{r-g}\rightarrow
H_{r-g}(f;R)\rightarrow 0$ that $\depth (Z_{r-g})\geq d-g$,
therefore in this case, $I$ satisfies $\SDC_0$ at level $0$.

\item[(v)] In the case where $R$ is Gorenstein local and $I^{unm}$ is CM, where $I^{unm}$ is the unmixed part of $I$, it is shown in Proposition \ref{p15} that $I$ satisfies $\SDC_1$ at level $0$. $\SDC_1$ at level $+1$ is more mysterious, see Example\ref{e18}.

\end{enumerate}
\end{rem}

\begin{prop}\label{p21}
$\SD_k$ implies $\SDC_{k+1}$, whenever $R$ is a CM local ring.
\end{prop}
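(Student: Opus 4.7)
The plan is to run an upward induction on $i$, $0\leq i\leq r-g$, using the two standard short exact sequences attached to the Koszul complex of $f=f_1,\dots,f_r$:
\begin{equation*}
0\to B_{i-1}\to Z_{i-1}\to H_{i-1}(f;R)\to 0,\qquad 0\to Z_i\to K_i\to B_{i-1}\to 0,
\end{equation*}
combined with the depth lemma for short exact sequences (in the forms $\depth B\geq\min\{\depth A,\depth C\}$ and $\depth A\geq\min\{\depth B,\depth C+1\}$). Since $R$ is Cohen-Macaulay of dimension $d$, each $K_i$ is free of depth $d$, and this is the source of the term ``$d$'' in the $\SDC_{k+1}$ bound.

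For the base case $i=0$ one has $Z_0=R$ of depth $d$, trivially exceeding the required bound. For the inductive step, assuming the $\SDC_{k+1}$ estimate $\depth Z_{i-1}\geq\min\{d-r+i+k,\,d-g+2,\,d\}$, I combine it with the $\SD_k$ hypothesis $\depth H_{i-1}(f;R)\geq\min\{d-g,\,d-r+i-1+k\}$ and apply the depth lemma to the first sequence to derive
\begin{equation*}
\depth B_{i-1}\geq\min\{d-r+i+k,\,d-g+1,\,d\},
\end{equation*}
after collapsing the minima. Feeding this into the second sequence, together with $\depth K_i=d$, should yield
\begin{equation*}
\depth Z_i\geq\min\{d,\,\depth B_{i-1}+1\}=\min\{d,\,d-r+i+k+1,\,d-g+2\},
\end{equation*}
which is exactly the required $\SDC_{k+1}$ bound at index $i$.

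The only genuine design choice, and the main subtlety, is to induct \emph{upward} rather than downward. A descending induction beginning from the known value $\depth Z_{r-g+1}=d-g+2$ furnished by Remark \ref{r14}(iii) is tempting but fails: the passage from $Z_{i+1}$ to $B_i$ across $0\to Z_{i+1}\to K_{i+1}\to B_i\to 0$ loses one unit of depth which the second step does not recover, producing only the weaker estimate $\min\{d-r+i+k,\,d-g\}$. The ascending direction is tight because $K_i$ being free contributes a $+1$ in the inequality $\depth Z_i\geq\depth B_{i-1}+1$, and this extra unit precisely compensates for the drop in the $\SD_k$ hypothesis when passing from index $i-1$ to $i$. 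Beyond choosing the correct direction of induction, the argument amounts to routine bookkeeping of minima.
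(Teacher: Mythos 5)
Your argument is correct, and it takes a genuinely different route from the paper. The paper proves this by tensoring the truncated complex $0\to Z_i\to K_i\to\cdots\to K_0\to 0$ with the \v Cech complex $\mathcal{C}^\bullet_\fm(R)$ and comparing the two resulting spectral sequences: the vertical one isolates $H^q_\fm(Z_i)$ as the only contribution in rows $q\neq d$, while $\SD_k$ forces the relevant part of the horizontal $E_2$-page to vanish, and convergence then yields the required vanishing of $H^q_\fm(Z_i)$ in one stroke for each fixed $i$. You instead unfold exactly that information through the two short exact sequences $0\to B_{i-1}\to Z_{i-1}\to H_{i-1}\to 0$ and $0\to Z_i\to K_i\to B_{i-1}\to 0$ and run an elementary ascending induction with the depth lemma, which is more transparent and requires no spectral-sequence machinery; the spectral sequence is, in effect, bookkeeping all $i$ of your inductive steps simultaneously. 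Your accounting of the minima is correct: the inductive hypothesis together with $\depth H_{i-1}\geq\min\{d-g,d-r+i+k-1\}$ gives $\depth B_{i-1}\geq\min\{d-r+i+k,\,d-g+1,\,d\}$ via $\depth A\geq\min\{\depth B,\depth C+1\}$, and then $\depth K_i=d$ (this is where CM is used) lifts this to the stated $\SDC_{k+1}$ bound. Your observation about the failure of the descending induction is also sound: the passage from $Z_{i+1}$ to $B_i$ only gives $\depth B_i\geq\min\{\depth Z_{i+1}-1,\,d\}$, and the subsequent step recovers nothing, so the estimate degrades by one unit at each stage; the ascending direction works precisely because the freeness of $K_i$ supplies the $+1$ that absorbs the simultaneous drop in the $\SD_k$ lower bound.
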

\begin{proof}
 Consider the truncated Koszul complex
\begin{center}
$0 \ra Z_i \ra K_i \ra K_{i-1} \ra \cdots \ra K_0\ra 0$.
\end{center}
Tensoring the \v{C}ech complex, $\mathcal{C}^\bullet_\fm(R)$, with this
complex, we have the following spectral sequences
$$
\sideset{^{1}}{_{\text\footnotesize{ver}}^{-p,-q}}{\E}=
\left\lbrace
           \begin{array}{c l}
              H^{q}_{\fm}(Z_i)    & \text{ $p=i+1$,}\\
              0 & \text{$p\neq i+1$ and $q \neq d$}, \\
              H^{d}_{\fm}(K_p)    & \text{ $p\neq i+1$ and $q=d$;}
           \end{array}
         \right. $$
so that $\sideset{^{1}}{_{\text\footnotesize{ver}}^{-p,-q}}{\E} \cong \sideset{^{2}}{_{\text\footnotesize{ver}}^{-p,-q}}{\E}$ for all $q \neq d$, and
$\sideset{^{2}}{_{\text\footnotesize{ver}}^{-p,-q}}{\E}=\sideset{^{\infty}}{_{\text\footnotesize{ver}}^{-p,-q}}{\E}$, for any $p$ and $q$.
Recall that $\SDC_{k+1}$ is equivalent to say that$\sideset{^{1}}{_{\text\footnotesize{ver}}^{-p,-q}}{\E}=0$ for $p=i+1$, $i \leq r-g$
 and $q \leq \min\{d-r+k+p-1,d-g+1,d-1\} $.

On the other hand,
$$ \sideset{^{2}}{_{\text\footnotesize{hor}}^{-p,-q}}{\E} =
\left\lbrace
           \begin{array}{c l}
              0  & \text{  $p \geq i$, or  $p \geq r-g-k$ and $q \leq d-g-1$},\\
              0  & \text{  $p \leq \min \{i-1, r-g-k-1\}$ and $q-p \leq d-r+k-1$.}
           \end{array}
         \right. $$
The result, now, follows from the convergence of the spectral
sequences.
\end{proof}

Recall that the unmixed part of an ideal $I$, $I^{unm}$, is the intersection
of all primary components of $I$ with height equal to $\Ht I$. If $I' $ is an
ideal that coincides with $I$ locally in height $\Ht I$ in $\Var (I)$,
 then $I' \subseteq I^{unm}$, \cite[exercise 6.4]{M}. As well, $I^{unm}\subseteq{\rm Ann}(H_{r-g}(f;R)={\rm Ann}(\Ext_R^g(R/I,R)),$ and the equality holds if  $R$ is Gorenstein locally in height $\Ht(I)$.
  Recall that if $R$ is Gorenstein local, then $\omega_{R/I}:=\Ext_R^g(R/I,R)$
 is called the canonical module of $R/I$; in the sense of \cite{HK}.

 In \cite[1.1]{HVV}, Herzog, Vasconcelos and Villarreal  present a lower bound
  for $\depth (Z_{r-g}) $, in the case where $R$ is Gorenstein local and $I$ is CM. In the next proposition
 we  clarify all of the local cohomology modules of $Z_{r-g} $ and exactly
 determine $\depth (Z_{r-g}) $, which gives a complete generalization to \cite[1.1]{HVV}.

\begin{prop}\label{p15}
Suppose that $(R,\fm)$ is Gorenstein and denote by $^\upsilon$ the Matlis dual. Then
\begin{enumerate}
\item[(i)] $H^i_{\fm}(Z_{r-g})\cong H^i_{\fm}(\omega_{R/I})$ for $i<d-g$,

\item[(ii)]  $H^{d-g}_{\fm}(Z_{r-g})\cong (\coker (R/I\xrightarrow{can.} {\rm End}_R (\omega_{R/I})))^\upsilon$,

\item[(iii)] $H^{d-g+1}_{\fm}(Z_{r-g})\cong (I^{unm}/I)^\upsilon$ whenever $g \geq 2$ ,

\item[(iv)] $H_{\fm}^{d-g+i}(Z_{r-g})\cong (H_{i-1}(f;R))^\upsilon$ for $2\leq i \leq g-1$

\item[(v)] $H^{d}_{\fm}(Z_{r-g})\cong I^{unm}$ , if $g=1$.

\end{enumerate}

In particular either $\depth(Z_{r-g}) = \depth(\omega_{R/I})$ or $R/ I^{unm}$ is CM.
In the latter case:

\begin{enumerate}
\item{ $\depth(Z_{r-g}) = d$ if either $g=1$ or $f$ is a regular sequence.}

\item{$\depth(Z_{r-g}) = d-g+1$ if  $I$ is not unmixed.}

 \item{$\depth(Z_{r-g})=d-g+2$ if $g \geq 2$,  $I$ is CM and $f$ is not a regular sequence. }
\end{enumerate}

\end{prop}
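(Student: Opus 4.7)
The plan is to compute the modules $\Ext^j_R(Z_{r-g}, R)$ for every $j$ and then translate each identification to local cohomology via Gorenstein local duality $H^i_\fm(M)\cong \Ext^{d-i}_R(M,R)^\upsilon$. The starting point is the short exact sequence
\begin{equation*}
0\to B_{r-g}\to Z_{r-g}\to H_{r-g}(f;R)\to 0,
\end{equation*}
together with the fact that, since $\grade I=g$, the higher Koszul homologies $H_j(f;R)$ vanish for $j>r-g$, so the tail of the Koszul complex provides a free $R$-resolution of $B_{r-g}$ of length $g-1$.

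The key computation is $\Ext^j_R(B_{r-g},R)$. Dualizing that resolution and using the self-duality $K_j^*\cong K_{r-j}$ of the Koszul modules reduces this to the cohomology of the truncated Koszul complex $0\to K_{g-1}\to\cdots\to K_0\to 0$, from which one reads off $\Ext^0_R(B_{r-g},R)\cong Z_{g-1}(f;R)$, $\Ext^j_R(B_{r-g},R)\cong H_{g-1-j}(f;R)$ for $1\le j\le g-2$, $\Ext^{g-1}_R(B_{r-g},R)\cong R/I$, and $\Ext^j_R(B_{r-g},R)=0$ for $j\ge g$. Since $H_{r-g}(f;R)$ is annihilated by $I$, we also have $\Ext^j_R(H_{r-g}(f;R),R)=0$ for $j<g$. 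A hyper-Ext spectral-sequence argument, exploiting once more the self-duality of the Koszul complex, identifies $H_{r-g}(f;R)\cong \omega_{R/I}$ and $\Ext^g_R(\omega_{R/I},R)\cong {\rm End}_R(\omega_{R/I})$, with the connecting homomorphism in the $\Ext$ long exact sequence matching the canonical action $R/I\to {\rm End}_R(\omega_{R/I})$, whose kernel is $I^{unm}/I$.

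Assembling these ingredients through the long exact sequence of $\Ext(-,R)$ applied to the short exact sequence above yields $\Ext^j_R(Z_{r-g},R)\cong \Ext^j_R(B_{r-g},R)$ for $1\le j\le g-2$, $\Ext^j_R(Z_{r-g},R)\cong \Ext^j_R(\omega_{R/I},R)$ for $j>g$, and the central four-term exact sequence
\begin{equation*}
0\to \Ext^{g-1}_R(Z_{r-g},R)\to R/I\to {\rm End}_R(\omega_{R/I})\to \Ext^g_R(Z_{r-g},R)\to 0.
\end{equation*}
Matlis-dualizing each identification via local duality then delivers (i)--(iv). The case $g=1$ is handled by the same LES, only now $B_{r-1}$ is itself free: $\Hom_R(Z_{r-1},R)$ equals the kernel of $R\to {\rm End}_R(\omega_{R/I})$, namely $I^{unm}$, which gives (v). The depth consequences follow by inspection: (i) always forces $\depth(Z_{r-g})\ge \depth(\omega_{R/I})$, with equality unless both obstructions at cohomological degrees $d-g$ and $d-g+1$ vanish, i.e. unless $R/I^{unm}$ is CM; the four enumerated sub-cases then correspond to explicit vanishing patterns among $I^{unm}/I$, the lower Koszul homologies $H_{i-1}(f;R)$, and $\coker(R/I\to {\rm End}_R(\omega_{R/I}))$.

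The main obstacle is the identification $H_{r-g}(f;R)\cong\omega_{R/I}$ together with $\Ext^g_R(\omega_{R/I},R)\cong {\rm End}_R(\omega_{R/I})$ in such a way that the canonical map $R/I\to {\rm End}_R(\omega_{R/I})$ genuinely arises as the connecting homomorphism in the $\Ext$ long exact sequence. Both identifications are where the Gorenstein hypothesis on $R$ is essential, entering respectively through Koszul self-duality (to collapse the hyper-Ext spectral sequence against $H_{r-g}(f;R)$) and through the biduality for canonical modules of grade $g$.
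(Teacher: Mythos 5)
Your plan is essentially the paper's: start from the short exact sequence $0\to B_{r-g}\to Z_{r-g}\to H_{r-g}(f;R)\to 0$, compute $\Ext^j_R(B_{r-g},R)$ from the tail of the Koszul complex (you do this directly via Koszul self-duality, whereas the paper presents the same computation as a collapsing spectral sequence on $\Hom_R(\mathcal{T}_\bullet,\mathcal{I}^\bullet)$ --- a cosmetic difference), and then assemble $\Ext^j_R(Z_{r-g},R)$ from the long exact sequence and pass through local duality. That scaffolding is correct.

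However, the step you yourself flag as ``the main obstacle'' is a genuine gap, and appealing to Koszul self-duality and canonical-module biduality does not close it. Those give the \emph{module} isomorphisms $H_{r-g}(f;R)\cong\omega_{R/I}$ and $\Ext^g_R(\omega_{R/I},R)\cong\mathrm{End}_R(\omega_{R/I})$, but they say nothing about \emph{which} map the connecting homomorphism $\eta\colon R/I=\Ext^{g-1}_R(B_{r-g},R)\to\Ext^g_R(\omega_{R/I},R)=\mathrm{End}_R(\omega_{R/I})$ becomes under these identifications. A priori $\eta$ is just some $R/I$-linear map, i.e.\ multiplication by an element $\eta(1)\in\mathrm{End}_R(\omega_{R/I})$, and there is no formal reason for $\eta(1)$ to be the identity or even a unit. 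The paper supplies exactly this missing ingredient: localizing at primes $\fp\supseteq I$ of height $\le g+1$, where $Z_{r-g}$ has finite projective dimension $\le g-1$ (Remark \ref{r14}(iii)), one gets $\Ext^g_R(Z_{r-g},R)_\fp=0$, hence $\eta(1)$ is a unit in $\mathrm{End}_R(\omega_{R/I})_\fp$ for all such $\fp$, and the Krull principal ideal theorem applied to the ring $\mathrm{End}_R(\omega_{R/I})$ then forces $\eta(1)$ to be a unit globally. Only after that does $\ker\eta$ and $\coker\eta$ agree with those of the canonical map. Relatedly, the identification $\ker(\text{can.})=I^{unm}/I$ used for (iii) and (v) also requires an argument (the paper proves the induced map $\bar\eta\colon R/I^{unm}\to\mathrm{End}_R(\omega_{R/I})$ is injective by examining associated primes). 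Without these two verifications your (ii), (iii) and (v) are not yet proved.
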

\begin{proof}  Consider the short exact sequence $0 \ra B_{r-g} \ra Z_{r-g} \ra \omega_{R/I}(\cong H_{r-g}(f;R)) \ra 0$. Since $B_{r-g}$ is a module of  projective dimension
  $g-1$,  $\depth B_{r-g}=d-g+1 \text{~~and~~}  \Ext_R^i(Z_{r-g},R) \cong \Ext_R^i(\omega_{R/I},R)$ for $i \geq g+1$. Now (i) follows by the local duality.
  Since $\Ext_R^i(\omega_{R/I},R)=0$ for $i \leq g-1$, we have $\Ext_R^{g-i}(Z_{r-g},R)=\Ext_R^{g-i}(B_{r-g},R)$ for all $2 \leq i \leq g$, and the following exact sequence,
  \begin{gather}
  0 \ra \Ext_R^{g-1}(Z_{r-g},R) \ra \Ext_R^{g-1}(B_{r-g},R)\ra \Ext_R^{g}(\omega_{R/I},R) \ra \Ext_R^{g}(Z_{r-g},R) \ra 0. 
  \label{+}
  \end{gather}

To determine all of the $R$-modules, $ \Ext_R^{i}(B_{r-g},R)$, consider the following exact complex, which is a truncation of the Koszul complex $K_\bullet(f;R)$,
$$\mathcal{T}_ \bullet:0\rightarrow K_r\rightarrow \cdots \rightarrow K_{r-g+1} \rightarrow B_{r-g} \rightarrow  0. $$

 Let $\mathcal{I}^\bullet$ be an injective resolution of $R$. The double complex $\Hom_R(\mathcal{T}_{\bullet},\mathcal{I}^{\bullet})$ whose  $(-i)$-th column is $\Hom_R(\mathcal{T}_{r-g+i},\mathcal{I}^j) $ for all $j \geq 0$, gives rise to two spectral sequences where $\sideset{^{1}}{_{\text\footnotesize{hor}}}{\E} = \sideset{^{\infty}}{_{\text\footnotesize{hor}}}{\E} = 0$ and

$$ \sideset{^{2}}{_{\text\footnotesize{ver}}^{-i,-j}}{\E} =
\left\lbrace
           \begin{array}{c l}
             \Ext_R^{j}(B_{r-g},R)  & \text{  $i=0$ and $j \geq 1$ ,}\\
                         0  & \text{  $i \geq 1$ and $ j \geq 1 $,}\\
              H_{g-i}(f;R) & \text{$i \geq 2$ and $j=0$}.
           \end{array}
         \right. $$

Notice that the only non-trivial map arising from this spectral sequence living in $\sideset{^{i}}{_{\text\footnotesize{ver}}}{\E}$ for, $2 \leq i \leq g+1$, is $\sideset{^{i}}{_{\text\footnotesize{ver}}^{0, -i+1}}{\D}:\Ext_R^{i-1}(B_{r-g},R)\ra H_{g-i}(f;R) $.  Therefore, as  $\sideset{^{\infty}}{_{\text\footnotesize{ver}}}{\E}=0$, all of these maps must be isomorphisms, which proves (iv). Also, if $g \geq 2$, it shows that $\Ext_R^{g-1}(B_{r-g},R) \cong R/I.$

We now separate the cases $g=1$ and $ g \geq 2$. First, if $g \geq 2$. Notice that $\Ext_R^{g}(\omega_{R/I},R) \cong {\rm End}_R(\omega_{R/I})$, then, by modifying the maps in \ref{+}, we have the following exact sequence,
 \begin{gather}
  0 \ra \Ext_R^{g-1}(Z_{r-g},R) \ra R/I \xrightarrow{\eta} {\rm End}_R(\omega_{R/I}) \ra \Ext_R^{g}(Z_{r-g},R) \ra 0 , 
  \end{gather}

 where $\eta$ is given by mutiplication by $\eta(1)$. Now, let $\fp \supseteq I$ be a prime ideal of height at most $g+1$, then $\Ext_R^{g}(Z_{r-g},R)_{\fp}=0$, by Remark \ref{r14}[iii], which implies that $\eta(1)$ is unit in ${\rm End}_R(\omega_{R/I})_{\fp}$. The Krull principal ideal theorem applied to the ring  ${\rm End}_R(\omega_{R/I})$ then implies that $\eta(1)$ is unit in ${\rm End}_R(\omega_{R/I})$. Therefore $ \Ext_R^{g}(Z_{r-g},R) \cong \coker \eta \cong \coker(R/I\xrightarrow{can.} {{\rm End}}_R (\omega_{R/I})) $, which yields (ii) for $g \geq 2$.

 For (iii), recall that $\eta$ induces a homomorphism $\bar{\eta}: R/I^{unm}\ra {\rm End}_R(\omega_{R/I}) $  with $\coker \bar{\eta} \cong \coker \eta$. As mentioned above, $\bar{\eta}_\fp$ is onto for all $\fp \subseteq I$ with $\Ht \fp=g$; on the other hand for such a  prime ideal
 $(\omega_{R/I})_{\fp} \cong (R/I)_{\fp} \cong (R/I^{unm})_{\fp},$ hence the composed map from $(R/I)_{\fp}$ to itself is an  isomorphism which implies that $(\bar{\eta})_{\fp}$ is an isomorphism; so that $\bar{\eta}$ is injective, since $\Ass(\Ker \bar{\eta} \subseteq \Ass(R/I^{unm})$. Now, (iii) follows from the commutative diagram below,
 $$
 \xymatrix{
 R/I\ar^(.4){\eta}[r]\ar_{can.}[d] &{\rm End}_R(\omega_{R/I})\\
 R/I^{unm}\ar_{\bar{\eta}}[ur]&\hspace{2cm}.\\
  }
 $$

We now turn to the case $g=1$. To prove (v), note that in this case $B_{r-g}\cong R$, thus the exact sequence \ref{+} can be written as
 \begin{gather}
  0 \ra \Hom_R(Z_{r-1},R) \ra R \xrightarrow{\eta} {\rm End}_R(\omega_{R/I}) \ra \Ext_R^{1}(Z_{r-1},R) \ra 0 
  \label{++}.
  \end{gather}
  One then shows as above that $\eta(1)$ is unit in ${\rm End}_R(\omega_{R/I})$, and  $\Ker \eta= \Ker(R/I\xrightarrow{can.} {\rm End}_R (\omega_{R/I})))={\rm Ann({\rm End}_R (\omega_{R/I}))}= I^{unm}.$

  Replacing $\Ker \eta$ by $I^{unm}$ in \ref{++}, we have the following short exact sequence which immediately completes the proof of (ii) for the case $g=1$,
  $$0\ra R/I^{unm} \xrightarrow{\bar{\eta}} {\rm End}_R(\omega_{R/I}) \ra \Ext_R^{1}(Z_{r-1},R) \ra 0.$$
  By Remark \ref{r14}(iii), $\Ext_R^{1}(Z_{r-1},R)_{\fp}=0$ for all $\fp \supseteq I$ with $\Ht \fp=1,2$. Therefore $\Dim(\Ext_R^{1}(Z_{r-1},R))\leq (\Dim(R/I))-2$. Now, if $R/I^{unm}$ is CM or even satisfies $S_2$, then both $R/I^{unm}$ and ${\rm End}_R(\omega_{R/I})$ satisfy  $S_2$; so that $\depth(\Ext_R^{1}(Z_{r-1},R)_{\fp})\geq 1$, for the same prime ideals $\fp$, which implies that $\Ext_R^{1}(Z_{r-1},R)=0$. Now (1) follows from this fact  and (v), while (2) and (3) are immediate consequences of (i)-(iv). We just mention that, if $\frak{b}$ is an ideal in the Gorenstein ring $R$, then $\omega_{R/\frak{b}}$ is CM and $R/\frak{b}$ is S$_2$ if and only if $R/I^{unm}$ is CM.

  \end{proof}

 We return to the complex $\mathcal{Z}_\bullet^+$ to investigate the acyclicity of this complex. In the next theorem it is shown that the complex  $\mathcal{Z}_\bullet^+$ is acyclic for a wide class of ideals.
\begin{thm}\label{t15}
Suppose that $R$ is a CM local ring and that $J$ is an
$s$-residual intersection of $I$. If $I$ satisfies $\SDC_0$ at
level $\min \{s-g-3,r-g\}$, then $\mathcal{Z}_\bullet^+$ is
acyclic.
\end{thm}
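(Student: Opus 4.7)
The plan is to mirror the approach of Lemma \ref{l12}, running the double complex $\mathcal{G} := \mathcal{C}_{\frak{g}}^\bullet(S) \otimes_S \mathcal{D}_\bullet$. By Lemma \ref{l11}(i), the vertical spectral sequence of $\mathcal{G}$ collapses at $E^1$ onto the row $q=r$, whose degree-$0$ slice is by construction $\mathcal{Z}_\bullet^+$. Hence the vertical $E^2$ in degree $0$ is the homology of $\mathcal{Z}_\bullet^+$, and since both spectral sequences of $\mathcal{G}$ abut to the same total hypercohomology, acyclicity follows once one shows that $H^j_{\frak{g}}(H_i(\mathcal{D}_\bullet))_0$ vanishes at every bidegree $(i,j)$ outside $(0,0)$ contributing to the relevant diagonals. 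The novelty compared with Lemma \ref{l12} is that in the residual (non-linkage) case we only have $(\gamma)+L \subsetneq \frak{g}$, so the annihilator argument that killed the higher $H^j_{\frak{g}}$ there is no longer available and must be replaced by a depth argument.

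The first step is to collect structural information on $H_i(\mathcal{D}_\bullet)$ via the two spectral sequences of $\mathcal{E} = \mathcal{Z}'_\bullet \otimes_S K_\bullet(\gamma;S)$: the vertical page reads $E^1 = H_j(\mathcal{Z}_\bullet) \otimes_S \bigwedge^i S(-1)^s$, exhibiting every $H_i(\mathcal{D}_\bullet)$ as an $\mathcal{S}_I$-module, while the horizontal page shows that $(\gamma)$ acts nilpotently on it. The second step is to extract from the $\SDC_0$ hypothesis the depth information needed to push the vanishing of $H^j_{\frak{g}}$ further. Remark \ref{r14}(iii) already pins down the depth of the free cycles $Z_i$ for $r-g+1 \le i \le r-1$, and $\SDC_0$ at level $\min\{s-g-3, r-g\}$ supplies the analogous depth bound on the remaining cycles $Z_{r-g}, Z_{r-g-1}, \ldots, Z_{r-s+3}$ appearing in $\mathcal{Z}'_\bullet$. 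These $R$-module depths translate, via Lemma \ref{l11} and a \v{C}ech-complex computation combining $\frak{g}$ with the maximal ideal of $R$, into vanishing of $H^j_{\frak{g}}(H_i(\mathcal{D}_\bullet))_0$ in the required range.

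The main obstacle is the numerical bookkeeping: aligning the level $\min\{s-g-3,r-g\}$ with the precise bidegrees where $H^j_{\frak{g}}(H_i(\mathcal{D}_\bullet))_0 = 0$ is needed in order for the horizontal $E^2$ of $\mathcal{G}$ to degenerate in degree $0$. The shift $s-g-3$ reflects the interplay between the range of $i$ occurring in $\mathcal{Z}_\bullet^+$, the internal shifts $(-r-t)$ built into $\mathcal{Z}'_\bullet$, and the fact that $H^r_{\frak{g}}(S(-n))_0 \ne 0$ only for $n \ge r$; together with the explicit $\beg(D_i)$ formula it pins down exactly where cycles of depth below the $\SDC_0$ threshold could otherwise contribute. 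Once the numerics are balanced, a diagonal comparison of the two convergent pages of $\mathcal{G}$ on the degree-$0$ strand forces $H_i(\mathcal{Z}_\bullet^+) = 0$ for every $i \ge 1$.
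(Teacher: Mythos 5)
Your plan replaces the paper's argument with a direct spectral-sequence degeneration argument, and the key step — the claim that the $\SDC_0$ depth bounds on the cycles $Z_i$ "translate\ldots into vanishing of $H^j_{\frak{g}}(H_i(\mathcal{D}_\bullet))_0$" — is not only unproved but is, I believe, simply not available. The $\SDC_0$ hypothesis bounds $\fm$-depth of the cycles of $K_\bullet(f;R)$, whereas $H^j_{\frak{g}}(H_i(\mathcal{D}_\bullet))_0$ is governed by the $\frak{g}$-torsion properties of the homology modules $H_i(\mathcal{D}_\bullet)$, which, by the vertical spectral sequence of $\mathcal{E}$ that you yourself cite, are built from the $\mathcal{Z}$-complex homologies $H_j(\mathcal{Z}_\bullet)$ tensored with exterior powers. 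There is no mechanism by which depth of cycles with respect to $\fm$ controls local cohomology of $H_i(\mathcal{D}_\bullet)$ with respect to $\frak{g}$, and a "\v{C}ech computation combining $\frak{g}$ with $\fm$" produces information about $H^\bullet_{\fm+\frak{g}}$, not about $H^\bullet_{\frak{g}}$. Once $\fa\subsetneq I$, the modules $H_i(\mathcal{D}_\bullet)$ typically do have nonvanishing higher $\frak{g}$-local cohomology, so the degeneration you are aiming for will in general fail.

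The second and related gap: your sketch never invokes the hypothesis $\Ht(J)\geq s$. That hypothesis is essential — without it the conclusion is false — so any correct proof must use it somewhere, and in your outline there is nowhere for it to enter. The paper uses it precisely where you would need it: the proof applies the acyclicity lemma of Peskine--Szpiro (\cite[1.4.24]{BH}), reducing the theorem to (a) $\depth(\mathcal{Z}_i^+)\geq i$, which is what $\SDC_0$ and Remark \ref{r14}(iii) supply, and (b) acyclicity of $(\mathcal{Z}_\bullet^+)_\fp$ on the punctured spectrum, proved by induction on $\Ht(\fp)$. The residual-intersection condition $\Ht(J)\geq s$ is exactly what gives $\fa R_\fp= I R_\fp$ for all $\fp$ with $\Ht(\fp)\leq s-1$, so that Lemma \ref{l12} (the case $I=\fa$) handles the base of the induction. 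Your spectral sequence of $\mathcal{G}$ is the right tool for proving Lemma \ref{l12} and for identifying $\coker\varphi_0$ afterward, but for the acyclicity statement of Theorem \ref{t15} the acyclicity lemma is the decisive instrument, because it lets one combine depth data with low-height collapse instead of requiring outright vanishing of the horizontal $E^2$ page. That vanishing is a far stronger assertion than the theorem, and it is the wrong target.
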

\begin{proof}
Invoking  the "lemme d'acyclicit\'{e}" \cite{PS2} or
\cite[1.4.24]{BH}, we have to show that
\begin{enumerate}
\item[(i)] $\mathcal{Z}_\bullet^+$ is acyclic on the punctured
spectrum, and
\item[(ii)] $\depth (\mathcal{Z}_i^+)\geq i$ for all $i \geq 0$.
\end{enumerate}
(ii) is automatically satisfied due to the condition $\SDC_0$, we
just recall that Remark \ref{r14}(iii) assures that the mentioned level
in the theorem is enough.

To prove (i), let $\fp$ be a non-maximal prime ideal of $R$.
Using induction on $\Ht \fp$, we prove that
$(\mathcal{Z}_\bullet^+)_\fp$ is acyclic. If $\Ht \fp \leq s-1$,
then, by definition of $s$-residual intersection, $\fa R_\fp=
IR_\fp$ which in conjunction with Lemma \ref{l12} implies that
$(\mathcal{Z}_\bullet^+)_\fp$ is acyclic. Now, assume that $\Ht
\fp \geq s$ and that $(\mathcal{Z}_\bullet^+)_\fq$ is acyclic for
any prime ideal $\fq$ with $\Ht \fq < \Ht \fp$. At this moment we
apply the acyclicity's lemma to the complex
$(\mathcal{Z}_\bullet^+)_\fp$. Condition (i)
is satisfied by induction hypothesis. To verify  condition (ii)
for this complex, we consider two cases:
\begin{itemize}
\item{$s-3\leq r$. By Remark \ref{r14}(iii) $\depth ((\mathcal{Z}_i^+)_\fp) \geq
2$ for $i=0,1,2$ (the case where $\depth(R)=1=s$ is trivial). Let  $i\geq
3$, then  keeping in mind the level mentioned in the theorem, we have $\depth ((\mathcal{Z}_i^+)_\fp) = \depth
((Z_{r-s+i}^+)_\fp)= \min\{ \depth ((Z_{r-s+j})_\fp): j \geq i\} \geq \Ht \fp -r
+ (r-s+i) = \Ht \fp -s + i \geq i$.}
\item{$s-3\geq r$. In this case, $(\mathcal{Z}_i^+)_\fp =(Z_0)_\fp\oplus (\oplus_{j\geq 1} (Z_j^{e_{ij}})_\fp) $
for all  $0\leq i \leq s-r$ and some $e_{ij}$.
 Hence, we have to show that $\depth ((Z_i^+)_\fp) \geq s-r +i$ for all $i \geq 0$.
 Remark \ref{r14}(i)
 implies that $\depth ((Z_i^+)_\fp) \geq \Ht \fp -r +i$, and we have  $ \Ht \fp -r +i = \Ht \fp -s +s -r +i \geq s-r+i$ as desired.   }
\end{itemize}
\end{proof}

Now, we identify the module $\coker \varphi_0$.

Consider the two spectral sequences arising from the double
complex $\mathcal{G}$ (see the proof of Lemma \ref{l12}):
\begin{center}
$(\sideset{^{2}}{_{\text\footnotesize{hor}}^{-i,-j}}{\E})_0=H^{j}_{\frak{g}}(H_i(\mathcal{D}_\bullet))_0$
$\hspace{12pt}$for all $i$ and $j$, $(\sideset{^{1}}{_{\text\footnotesize{ver}}^{}}{\E})_0=\mathcal{Z}_\bullet^+$ and\\
$(\sideset{^{2}}{_{\text\footnotesize{ver}}^{-i,-j}}{\E})_0=
\left\lbrace
      \begin{array}{cl}
       H_{i-r}(\mathcal{Z}_\bullet^+) &\text{if}\hspace{3pt} j= r,\\
       0&\text{otherwise.}
       \end{array}
       \right.$

\end{center}
Recall that the degree of a homomorphism in $\sideset{^{i}}{_{\text\footnotesize{hor}}}{\E}$ is $(-i+1,-i)$.
Thus
$(\sideset{^{\infty}}{_{\text\footnotesize{hor}}^{0,0}}{\E})_0\subset (\sideset{^{2}}{_{\text\footnotesize{hor}}^{0,0}}{\E})_0=
H^{0}_{\frak{g}}(H_0(\mathcal{D}_\bullet))_0 \subseteq R $. On the
other hand, by the convergence of
$(\sideset{^{2}}{_{\text\footnotesize{hor}}^{-i,-j}}{\E})_0$ to
the homology modules of
$\mathcal{Z}_\bullet^+$,
there exists a filtration of $H_0(\mathcal{Z}_\bullet^+)=\coker
\varphi_0$, say $\cdots \subseteq \mathcal{F}_2 \subseteq
\mathcal{F}_1 \subseteq \coker \varphi_0$, such that $\coker
\varphi_0/\mathcal{F}_1 \cong
(\sideset{^{\infty}}{_{\text\footnotesize{hor}}^{0,0}}{\E})_0$.
Therefore, defining $\tau$ as the composition of the following
homomorphisms
\begin{gather}
 Z_{r-s}^+ \xrightarrow{can.} \coker \varphi_0 \xrightarrow{can.} \coker
\varphi_0/\mathcal{F}_1 \cong
(\sideset{^{\infty}}{_{\text\footnotesize{hor}}^{0,0}}{\E})_0
\subseteq R,\label{*}
\end{gather}
we have another complex of $R$-modules $\mathcal{C}_\bullet:=
\mathcal{Z}_\bullet^+\xrightarrow{\tau}R \ra 0$.

\begin{prop}\label{p16}
Suppose that $R$ is a CM local ring and that $J$ is an
$s$-residual intersection of $I$. If $I$ satisfies $\SDC_1$ at
level $\min \{s-g-2,r-g\}$, then $\mathcal{C}_\bullet$ is acyclic.
\end{prop}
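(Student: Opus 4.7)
The plan is to invoke the Peskine--Szpiro acyclicity lemma, paralleling the proof of Theorem \ref{t15} but accommodating the additional term $R$ in degree zero of $\mathcal{C}_\bullet$. Concretely, I need to verify
\begin{enumerate}
\item[(a)] $(\mathcal{C}_\bullet)_\fp$ is acyclic for every non-maximal prime $\fp$, and
\item[(b)] $\depth \mathcal{C}_i \geq i$ for every $i\geq 1$.
\end{enumerate}

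For (b), observe that $\mathcal{C}_i = \mathcal{Z}_{i-1}^+ = Z_{r-s+i-1}^+$ for $i\geq 1$. The hypothesis $\SDC_1$ at level $\min\{s-g-2,r-g\}$ provides $\depth Z_k \geq \min\{d-r+k+1,\, d-g+2,\, d\}$ for exactly the range of $k$ appearing as summands of the relevant $Z_j^+$---one unit stronger and covering one more cycle than the $\SDC_0$ hypothesis of Theorem \ref{t15}. The same two-case analysis ($s-2\leq r$ versus $s-2\geq r$) as in Theorem \ref{t15}, shifted by one degree to make room for the inserted $R$, then yields $\depth \mathcal{C}_i\geq i$, with Remark \ref{r14}(iii) handling the free cycles of index $\geq r-g+1$.

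For (a), I argue by induction on $\Ht \fp$. If $\Ht \fp\leq s-1$ then, by the definition of an $s$-residual intersection, $\fa R_\fp = I R_\fp$, so Lemma \ref{l12} applies to $(\mathcal{Z}_\bullet^+)_\fp$, making it acyclic with $\coker(\varphi_0)_\fp\cong R_\fp$. Moreover, the proof of Lemma \ref{l12} actually shows that only the $(0,0)$ spot of the horizontal $E^2$-sheet survives in degree zero, so the filtration $\mathcal{F}_1$ on $\coker\varphi_0$ collapses upon this localization and $\tau_\fp$ identifies with the canonical surjection $(Z_{r-s}^+)_\fp\to\coker(\varphi_0)_\fp$ composed with the natural isomorphism to $R_\fp$; hence $(\mathcal{C}_\bullet)_\fp$ is acyclic. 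If $\Ht\fp\geq s$, the induction hypothesis yields acyclicity at every prime strictly below $\fp$, and the depth bounds from (b) localize by Remark \ref{r14}(i); the acyclicity lemma applied to $(\mathcal{C}_\bullet)_\fp$ then concludes.

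The main obstacle is the identification of $\tau_\fp$ with the canonical isomorphism when $\Ht \fp\leq s-1$; this is not formal from the definition of $\tau$ as the composition (\ref{*}), but follows from the explicit degeneration of the horizontal spectral sequence carried out inside the proof of Lemma \ref{l12}, which forces the filtration step $\mathcal{F}_1$ to vanish in that local setting.
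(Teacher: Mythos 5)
Your proposal is correct and follows essentially the same route as the paper: the paper's proof of Proposition \ref{p16} simply says it goes ``in the same way as the proof of Theorem \ref{t15},'' with the one added remark that the identification $\coker\varphi_0\cong R$ in Lemma \ref{l12} is precisely the map $\tau$. You have fleshed out exactly these two points --- the shifted-by-one depth verification via the acyclicity lemma (with $\SDC_1$ in place of $\SDC_0$) and the observation that the filtration step $\mathcal{F}_1$ in the definition of $\tau$ collapses under localization at primes of height $\leq s-1$, so that $\tau_\fp$ is the isomorphism of Lemma \ref{l12} --- and this is exactly what the paper leaves to the reader.
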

\begin{proof}
The proof will be in the same way as the proof of Theorem \ref{t15}. Notice that the identification $\coker \varphi_0\cong R$ in Lemma \ref{l12} is given by $\tau$.
\end{proof}

As an application of mentioning the levels in Theorem \ref{t15} and
Proposition \ref{p16}, one can see that in the case where the residual
intersection is close to the linkage the acyclicity of
$\mathcal{Z}_\bullet^+$ and $\mathcal{C}_\bullet$ follows
automatically, without any extra assumption on $I$.
\begin{cor}\label{c17}
If $R$ is a CM local ring and  $J$ is an $s$-residual intersection
of $I$, then
\begin{enumerate}
\item[(a)] $\mathcal{Z}_\bullet^+$ is acyclic if one of the
following conditions holds
  \begin{enumerate}
      \item[(i)]$s \leq g+2$, or
      \item[(ii)] $s=g+3$ and  $H_{r-g}(f;R)$ is CM.
  \end{enumerate}
\item[(b)] $\mathcal{C}_\bullet$ is acyclic if one of the
following conditions holds
  \begin{enumerate}
      \item[(i)]$s \leq g+1$, or
      \item[(ii)] $s=g+2$ ,  $R$ is Gorenstein and $I^{unm}$ is CM.
  \end{enumerate}
\end{enumerate}
\end{cor}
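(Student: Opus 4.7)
The plan is to reduce each of the four cases to a direct application of either Theorem \ref{t15} or Proposition \ref{p16}, by checking that the appropriate sliding depth condition on cycles holds at the level required. I would handle the two ``formal'' cases (a)(i) and (b)(i) first, and then treat (a)(ii) and (b)(ii) using Remark \ref{r14}(iv) and Proposition \ref{p15}, respectively.

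For (a)(i), the hypothesis $s \leq g+2$ forces $s-g-3 \leq -1$, so the level $\min\{s-g-3, r-g\}$ at which Theorem \ref{t15} demands $\SDC_0$ is negative. The range $r-g-t \leq i \leq r-g$ of indices at which Definition \ref{d13} imposes a depth condition is then empty, so the $\SDC_0$ hypothesis is vacuously satisfied and Theorem \ref{t15} applies with no extra assumption on $I$. Part (b)(i) follows in exactly the same way: $s \leq g+1$ makes the level $\min\{s-g-2, r-g\}$ in Proposition \ref{p16} negative, so the $\SDC_1$ hypothesis is vacuous.

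For (a)(ii) with $s = g+3$, the level becomes $\min\{0, r-g\}$. When $r = g$ the sequence $f$ is regular, $Z_{r-g} = R$, and $\SDC_0$ at level $0$ is immediate. When $r > g$ the level is $0$ and I must check $\depth(Z_{r-g}) \geq d-g$. The hypothesis that $H_{r-g}(f;R)$ is CM of dimension $d-g$, combined with the short exact sequence
\[
0 \to B_{r-g} \to Z_{r-g} \to H_{r-g}(f;R) \to 0
\]
and the observation that $B_{r-g}$ has projective dimension $g-1$ and hence depth $d-g+1$, yields $\depth(Z_{r-g}) \geq \min\{d-g+1, d-g\} = d-g$ by the depth lemma. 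This is exactly the content of Remark \ref{r14}(iv), so Theorem \ref{t15} applies.

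Finally, for (b)(ii) with $s = g+2$, Proposition \ref{p16} requires $\SDC_1$ at level $\min\{0, r-g\}$, i.e.\ $\depth(Z_{r-g}) \geq d-g+1$ (the case $r = g$ again being trivial). This is precisely the Gorenstein case of Proposition \ref{p15}: under the assumptions that $R$ is Gorenstein and $I^{unm}$ is CM, the concluding alternatives of Proposition \ref{p15} each yield depth at least $d-g+1$ for $Z_{r-g}$, as already announced in Remark \ref{r14}(v). The only step demanding attention is to confirm that each of the subcases of Proposition \ref{p15} --- $g=1$, $I$ not unmixed, $I$ CM and $f$ not a regular sequence, $f$ regular --- meets the threshold $d-g+1$, but this is direct inspection rather than a genuine obstacle. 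Proposition \ref{p16} then delivers the acyclicity of $\mathcal{C}_\bullet$, completing the proof.
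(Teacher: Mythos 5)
Your proof is correct and takes essentially the same route as the paper's: each of the four cases is reduced to Theorem \ref{t15} or Proposition \ref{p16} by checking the required $\SDC$ level, using Remark \ref{r14}(iv) for (a)(ii) and Remark \ref{r14}(v)/Proposition \ref{p15} for (b)(ii). The only cosmetic difference is in (a)(i) and (b)(i), where you observe that a negative level makes the index range in Definition \ref{d13} empty and hence the $\SDC$ condition vacuous, whereas the paper invokes Remark \ref{r14}(iii); these are equivalent readings of the same fact.
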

\begin{proof} All parts are immediate consequences of Theorem \ref{t15}, and Proposition \ref{p16}.
Both (a)(i) and (b)(i)  follow  from the fact that $\SDC_1$ at level $-1$
is always satisfied by Remark \ref{r14}(iii). Under the condition of (a)(i),
one can see that $I$ satisfies $\SDC_0$ at level $0$ by Remark \ref{r14}(iv). Also (b)(ii)is implied by  Remark\ref{r14}(v) as $I$ satisfied $\SDC_1$ at level $0$.
\end{proof}

\begin{example}\label{e18}
C. Huneke, in \cite[3.3]{H}, provides an example of a CM ideal $I$
in a regular local ring with a 4-residual intersections which
is not CM. In this example $r=6, s=4$, and $g=3$. Hence
Corollary \ref{c17}(b)(i) shows that the complex $\mathcal{C}_\bullet$,
associated to the ideals in \cite[3.3]{H}, is acyclic. Also, it
will be seen from Theorem \ref{t19} that the ideal $I$  is
an example of a CM ideal in a regular local ring which satisfy
$G_\infty$, generated by a proper sequence \cite[5.5($iv_a$)
and 12.9(2)]{HSV} but doesn't satisfy $\SDC_1$ at level $+1$.
\end{example}

Now, we are ready to establish our main theorem in this section.

\begin{thm}\label{t19}
Suppose that $(R, \fm)$ is a CM *local ring and that $J=\fa:I$  is an
$s$-residual intersection of $I$, with $I$ and $\fa$ are homogeneous ideals. If $I$ satisfies $\SDC_1$ at
level $\min \{s-g,r-g\}$, then either $J=R$, or there exists a homogeneous
ideal $K \subseteq J$ such that
\begin{enumerate}
\item[(i)]$K$ is CM of height $s$;
\item[(ii)]$\Var (K)=\Var (J)$;
\item[(iii)]$K=J$ off $\Var (I)$;
\item[(iv)]$K=J$, whenever $I/\fa$ is generated by at most one element locally in height $s$. In this case $R/J$ is resolved by the complex
$\mathcal{C}_\bullet$ associated to $I$ and $\fa$.
\end{enumerate}
\end{thm}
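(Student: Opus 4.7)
The plan is to take $K := \tau(Z_{r-s}^+) \subseteq R$, where $\tau$ is the degree-zero map from (\ref{*}); this is a homogeneous ideal with $R/K = H_0(\mathcal{C}_\bullet)$, and assuming $J \ne R$ I will show $K$ satisfies (i)--(iv). First, since the hypothesis $\SDC_1$ at level $\min\{s-g, r-g\}$ is stronger than the one in Proposition \ref{p16}, the complex $\mathcal{C}_\bullet$ is acyclic and resolves $R/K$. Each $\mathcal{Z}_i^+ = Z_{r-s+i}^+$ is either free (when $r-s+i \ge r-g+1$) or a direct sum of cycles $Z_k$ with $k \ge r-s+i$, and $\SDC_1$ bounds $\depth Z_k \ge \min\{d-r+k+1, d-g+2, d\}$; iterating the standard depth lemma along $\mathcal{C}_\bullet$ will yield $\depth(R/K) \ge d-s$.

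\medskip

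Next I would establish that $I_\fp = \fa_\fp$ for every prime $\fp$ with $\Ht(\fp) < s$: if $\fp \not\supseteq I$ then $I_\fp = R_\fp$ and $\Ht(J) \ge s$ forces $\fa_\fp = J_\fp = R_\fp$; if $\fp \supseteq I$, any $j \in J \setminus \fp$ is a unit in $R_\fp$ with $jI \subseteq \fa$, giving $I_\fp = \fa_\fp$. Since the construction of $\mathcal{C}_\bullet$ commutes with localization, Lemma \ref{l12} applied to $R_\fp$ yields $K_\fp = R_\fp$, whence $\Ht(K) \ge s$; combined with the depth bound this completes (i). To obtain $K \subseteq J$ I would trace $\tau$ through the spectral sequence of (\ref{*}): $\tau(Z_{r-s}^+)$ lies in the $\infty$-page subquotient $(\sideset{^{\infty}}{_{\mathrm{hor}}^{0,0}}{\E})_0$, and the extra restriction of being stable under all higher differentials (not merely lying in $H^0_{\frak{g}}(\mathcal{S}_I/(\gamma)\mathcal{S}_I)_0$) forces each representative $r$ to satisfy $rI \subseteq \fa$, i.e.\ $r \in J$. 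The same localized Lemma \ref{l12} argument applied at a minimal prime $\fp$ of $K$ (necessarily of height $s$ by the CM property) rules out $J \not\subseteq \fp$, so every minimal prime of $K$ contains $J$, giving $\sqrt{K} = \sqrt{J}$ and hence (ii).

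\medskip

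For (iii): if $\fp \not\supseteq I$ then $IR_\fp = R_\fp$, the Koszul complex $K_\bullet(f;R_\fp)$ is split exact, every $(Z_j)_\fp$ is free, and a direct computation of $\tau_\fp$ identifies $K_\fp$ with $\fa_\fp = J_\fp$. For (iv): $R/K$ being CM of dimension $d-s$ gives $\Ass(R/K) \subseteq \{\fp : \Ht \fp = s\}$, so $K = J$ iff $K_\fp = J_\fp$ at every height-$s$ prime. For such $\fp \not\supseteq I$ this is (iii); for $\fp \supseteq I$, the cyclicity hypothesis gives $I_\fp = \fa_\fp + (x)$ and $J_\fp = \fa_\fp : x$, and a local analysis of $\mathcal{C}_\bullet \otimes R_\fp$ using this single extra generator will identify $K_\fp$ with $\fa_\fp : x = J_\fp$. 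The main technical obstacles I anticipate are (a) rigorously verifying $\tau(Z_{r-s}^+) \subseteq J$ by exploiting stability across higher differentials of the horizontal spectral sequence (this is not formal from being in $H^0_{\frak{g}}$ alone), and (b) the detailed local computation for (iv) identifying $K_\fp$ with $\fa_\fp : x$ under the cyclicity hypothesis.
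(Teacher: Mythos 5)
Your outline is the paper's: set $K=\Image\tau$, use Proposition~\ref{p16} for acyclicity of $\mathcal{C}_\bullet$, localize with Lemma~\ref{l12} for height and parts (ii)--(iv), use the double complex $\mathcal{C}^\bullet_\fm(R)\otimes_R\mathcal{C}_\bullet$ for the depth bound. But the two steps you explicitly defer --- that $K\subseteq J$ and that $K_\fp=J_\fp$ at height-$s$ primes --- are the heart of the theorem, and your sketch offers no workable mechanism for either, so there are genuine gaps. For $K\subseteq J$, it is \emph{not} a matter of ``stability under higher differentials.'' The paper's mechanism is a degree count: Lemma~\ref{l11} gives $\End(H^r_{\frak{g}}(D_i))\leq i-r$, so in particular $H^r_{\frak{g}}(D_r)_1=0$, and convergence of the two spectral sequences of $\mathcal{G}$ then forces $(\sideset{^{\infty}}{_{\text\footnotesize{hor}}^{-i,-i}}{\E})_1=0$ for all $i$. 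Since $(0,0)$ is a corner there are no incoming differentials, so $\sideset{^{\infty}}{_{\text\footnotesize{hor}}^{0,0}}{\E}$ is a graded submodule of $H^0_{\frak{g}}(\mathcal{S}_I/(\gamma)\mathcal{S}_I)$ with degree-$0$ piece $\Image\tau$ and degree-$1$ piece zero; multiplication by $T_i$, which acts as $f_i$, sends degree $0$ to degree $1$, whence $f_i\cdot\Image\tau=0$ in $(\mathcal{S}_I/(\gamma)\mathcal{S}_I)_1=I/\fa$, i.e.\ $I\cdot\Image\tau\subseteq\fa$ and $\Image\tau\subseteq J$.

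For (iv), the paper never compares $K_\fp$ against $\fa_\fp:x$ by a term-by-term analysis of $\mathcal{C}_\bullet\otimes R_\fp$; that would hit exactly the opacity of $\tau$ you already flagged. Instead, $\mu(I/\fa)_\fp\leq 1$ gives $\frak{g}\mathcal{S}_I=(\gamma)\mathcal{S}_I+x\mathcal{S}_I$ for a single linear form $x$, hence $H^j_{\frak{g}}(H_i(\mathcal{D}_\bullet))\cong H^j_{(x)}(H_i(\mathcal{D}_\bullet))=0$ for $j\geq 2$, which collapses the horizontal spectral sequence at $(0,0)$ on the $2$-page: $\sideset{^{2}}{_{\text\footnotesize{hor}}^{0,0}}{\E}=\sideset{^{\infty}}{_{\text\footnotesize{hor}}^{0,0}}{\E}$. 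The conclusion comes from a sandwich, $\Image\tau\subseteq J\subseteq H^0_{\frak{g}}(\mathcal{S}_I/(\gamma)\mathcal{S}_I)_0=\sideset{^{\infty}}{_{\text\footnotesize{hor}}^{0,0}}{\E}=\Image\tau$, which uses the elementary inclusion $J\subseteq H^0_{\frak{g}}(\mathcal{S}_I/(\gamma)\mathcal{S}_I)_0$ (for $r\in J$, $\frak{g}\cdot r\subseteq rI/\fa=0$). Your sketch never records this inclusion, but it is essential; without it and the collapse there is no way to get $K_\fp\supseteq J_\fp$.
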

\begin{proof}  The fact that every ideal we consider is homogeneous enables us to  pass to the local ring $R_{\fm}$. Henceforth we assume $(R,\fm)$ is a CM local ring.

Consider the complex $\mathcal{C}_\bullet$ associated to $I$ and
$\fa$. We prove that the ideal $K=\Image \tau$ satisfies the
desired properties.  The convergence of the spectral sequences
arising from $\mathcal{G}$ in conjunction with
$H_\frak{g}^r(D_r)_1=0$ implies that
$(\sideset{^{\infty}}{_{\text\footnotesize{hor}}^{-i,-i}}{\E})_1=0$
for all $i\geq 0$.  (Further,  one can see that
$(\sideset{^{\infty}}{_{\text\footnotesize{hor}}^{0,0}}{\E})_j=0$
for $j\geq 1$ thus
$(\sideset{^{\infty}}{_{\text\footnotesize{hor}}^{0,0}}{\E})=(\sideset{^{\infty}}{_{\text\footnotesize{hor}}^{0,0}}{\E})_0=\Image
\tau$.) In particular $\frak{g}(\Image \tau)\subseteq
(\sideset{^{\infty}}{_{\text\footnotesize{hor}}^{0,0}}{\E})_1=0$.
That is $\Image \tau \subseteq J$.

If $J=R$, Lemma \ref{l12} implies that $\Image(\tau)=R$, hence to avoid the trivial cases assume, from now on,  that neither $J$ nor $\Image(\tau)$ is the unit ideal.

Notice that by Proposition \ref{p16} the  $\SDC_1$ condition of  $I$   implies
that the complex  $\mathcal{C}_\bullet$ is acyclic.

To prove (i), recall  that for any prime $\fp$ with $\Ht \fp \leq
s-1$, $\fa R_\fp=IR_\fp$, hence by Lemma \ref{l12},
$(\mathcal{C}_\bullet)_\fp \ra 0$ is exact. That is $(\Image
\tau)_\fp=R_\fp$. Thus $\fp$ does not contain $ \Image \tau$.
Therefore $\Ht (\Image \tau)\geq s$.  On the other hand,
considering the double complex
$\mathcal{C}_\fm^\bullet(R)\bigotimes_R\mathcal{C}_\bullet$ the condition $\SDC_1$ on $I$ implies that $\depth(R/\Image \tau)
= \depth(H_0(\mathcal{C}_\bullet))\geq d-s$. Therefore $R/\Image
\tau$ is CM of dimension $d-s$.

For (ii) it is enough to show that $\Var (\Image \tau)\subseteq \Var (J)$.
 Let $\fp$ be a prime ideal that does not contain $J$, then $\fa
R_\fp=IR_\fp$. Then Lemma \ref{l12} implies that $(\Image
\tau)_\fp=R_\fp$ and this completes the proof.

 (iii) is a special case of (iv). For (iv), as $\Image \tau \subseteq J$ and $\Image \tau$
 is CM of height $s$ by (i), it
is enough to show that $\Image \tau$ and $J$ coincide locally in height
$s$. Let $\fp$ be a prime ideal of height $s$. We may (and
do) replace $R$ by $R_\fp$ and assume that $\mu(I/\fa)\leq 1$.
It follows that $\frak{g}\mathcal{S}_I=(\gamma)\mathcal{S}_I + x\mathcal{S}_I$
for some $x \in (\mathcal{S}_I)_1$. Since
$\Supp(H_i(\mathcal{D}_\bullet))\subseteq
\Var((\gamma)\mathcal{S}_I)$ for all $i$ (see Lemma \ref{l12} and it's proof),
$H_\frak{g}^j(H_i(\mathcal{D}_\bullet)) \cong
H_{(x)}^j(H_i(\mathcal{D}_\bullet)) $ for all $j$. Thus
$H_\frak{g}^j(H_i(\mathcal{D}_\bullet))=0$ for all $j\geq 2$. It
then follows that,
$H_\frak{g}^0(\mathcal{S}_I/(\gamma)\mathcal{S}_I)=\sideset{^2}{_{\text\footnotesize{hor}}^{0,0}}{\E}=\sideset{^{\infty}}{_{\text\footnotesize{hor}}^{0,0}}{\E}$.

On the other hand, the following
commutative diagram,
$$\begin{array}{cccccccclcc}
\mathcal{Z}_1^+&\longrightarrow & \mathcal{Z}_0^+ &\longrightarrow &\coker \varphi_0&\longrightarrow &0          &                  && \\
\parallel      &                &\parallel        &                 &\downarrow     &                &\downarrow &                  && \\
\mathcal{Z}_1^+&\longrightarrow & \mathcal{Z}_0^+ &\xrightarrow{\tau}& R            &\longrightarrow&R/\Image\tau & \longrightarrow &0&
\end{array}$$
shows that, the  map $ \coker \varphi_0 \ra \Image\tau$  ~~induced by this diagram is
injective. Then, considering the canonical homomorphisms in (\ref{*}) defining $\tau$,
 $\mathcal{F}_1=0$. This fact implies that,
$(\sideset{^{\infty}}{_{\text\footnotesize{hor}}^{-i,-i}}{\E})_0=0$ for all $i\geq 1$.

Therefore
\begin{center} $\coker \varphi_0 \cong \Image\tau =
\sideset{^{\infty}}{_{\text\footnotesize{hor}}^{0,0}}{\E}=
H_\frak{g}^0(\mathcal{S}_I/(\gamma)\mathcal{S}_I)_0 $.
\end{center} Now, the result follows from the following inclusion
\begin{center}
$\sideset{^{\infty}}{_{\text\footnotesize{hor}}^{0,0}}{\E}=
\Image \tau \subseteq J
 \subseteq H_\frak{g}^0(\mathcal{S}_I/(\gamma)\mathcal{S}_I)_0.$
 \end{center}
\end{proof}

The condition imposed on Theorem \ref{t19}(iv), is not so restricting.
Indeed this condition replace to the conditions $G_s$ and
geometric in other works such as, \cite{C,H,HVV,HU}. Theorem \ref{t19}(iv) is a good progress
to affirmatively answer one of the main open questions in the
theory of  residual intersection \cite[Question5.7]{HU}. As a corollary
one can give a complete answer to this question in the geometric
case.

\begin{cor}\label{c111}
Suppose that $R$ is a CM local ring and $I$ satisfies the sliding
depth condition, $\SD$. Then any geometric residual intersection of $I$
is CM.
\end{cor}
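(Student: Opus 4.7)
My plan is to deduce Corollary \ref{c111} directly from Theorem \ref{t19}(iv), after verifying two things: that the geometric hypothesis forces $I/\fa$ to be locally generated by at most one element in height $s$, and that the sliding depth condition $\SD$ yields the required $\SDC_1$ hypothesis at the appropriate level.

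For the first point, I would unpack the definition of a geometric $s$-residual intersection, namely $\Ht(J) \geq s$ together with $\Ht(I + J) \geq s+1$. Let $\fp$ be any prime of height $s$. If $J \not\subseteq \fp$, then $J_\fp = R_\fp$, and since $J = \fa :_R I$ this forces $I_\fp \subseteq \fa_\fp$, giving $(I/\fa)_\fp = 0$. Otherwise $J \subseteq \fp$; then the condition $\Ht(I+J) \geq s+1$ prevents $I$ from being contained in $\fp$, so $I_\fp = R_\fp$ and $(I/\fa)_\fp \cong R_\fp/\fa_\fp$ is cyclic. Either way $\mu((I/\fa)_\fp) \leq 1$, which is exactly the hypothesis used in Theorem \ref{t19}(iv).

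For the second point, since $R$ is a CM local ring, Proposition \ref{p21} shows that $\SD = \SD_0$ implies $\SDC_1$ at level $r-g$, which a fortiori gives $\SDC_1$ at the possibly smaller level $\min\{s-g,\,r-g\}$ required by Theorem \ref{t19}.

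With both hypotheses of Theorem \ref{t19}(iv) in force, I would conclude: either $J = R$ (trivially CM), or $J = K$, where by Theorem \ref{t19}(i) the ideal $K$ is Cohen-Macaulay of height $s$; hence $R/J$ is CM. I expect the only mildly delicate step to be the case analysis at primes of height $s$ in the geometric setting; the remainder is a direct application of the earlier results, and no further depth or $G_s$-type hypotheses need to be imposed.
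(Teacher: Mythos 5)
Your proposal is correct and takes exactly the route the paper intends: the corollary is stated immediately after the remark that Theorem~\ref{t19}(iv) replaces the $G_s$/geometric hypotheses, and the two checks you perform (the case analysis at height-$s$ primes showing $\mu((I/\fa)_\fp)\le 1$ under the geometric hypothesis $\Ht(I+J)\ge s+1$, and the passage $\SD=\SD_0\Rightarrow\SDC_1$ via Proposition~\ref{p21}) are precisely the unstated verifications needed to invoke Theorem~\ref{t19}(iv) and then Theorem~\ref{t19}(i).
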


In spite of the complexity of the structure of the ideal $K$ introduced in  Theorem \ref{t19}, it is shown in the next proposition that under some conditions this ideal is a specialization of the generic one.

 We first recall the situations of the generic case. In addition to the notation at the beginning of the section, assume that $(R,\fm)$ is a Noetherian local ring and that $l_i=\sum_{j=1}^rc_{ij}f_j$ for all $i=1,\cdots, s$. Let $U=(U_{ij})$ be a generic $s$ by $r$ matrix, $\tilde{R}=R[U]_{(\fm,U_{ij}-c_{ij})}$, $\tilde{S}=\tilde{R}[T_1,\cdots,T_r]$, $\tilde{l}_i=\sum_{j=1}^r U_{ij}f_j$, $\tilde{\gamma}_i=\sum_{j=1}^r U_{ij}T_j$ for all $i=1,\cdots,s$, $\tilde{\gamma}=(\tilde{\gamma}_1,\cdots,\tilde{\gamma}_s)$, $\tilde{\fa}=(\tilde{l}_1,\cdots,\tilde{l}_s)$ and $\tilde{J}=\tilde{\fa}:_{\tilde{R}}I\tilde{S}$. Consider the standard grading of $\tilde{S}=\tilde{R}[T_1,\cdots, T_r]$ by setting $\Deg(T_i)=1$. Now by replacing the base ring $R$ by the ring $\tilde{R}$, we can construct
the double complex $\tilde{\mathcal{E}}:=\tilde{\mathcal{Z}}'_\bullet \bigotimes_{\tilde{S}} K_\bullet(\tilde{\gamma};\tilde{S})$. Consequently, $\tilde{D}_i=D_i\otimes_S\tilde{S}$. It then follows from the construction of the complex $\mathcal{Z}_{\bullet}^+$ that
\begin{gather*}\label{zz}\tag{2.5}
   \tilde{\mathcal{Z}}_i^+=(H^r_{\frak{g}}(\tilde{S})\otimes_{\tilde{S}}\tilde{D}_i)_0
 \cong( (H^r_{\frak{g}}(S)\otimes_S\tilde{S})\otimes_{\tilde{S}}(D_i\otimes_S\tilde{S} ))_0\\
\cong( (H^r_{\frak{g}}(S)\otimes_S D_i)\otimes_S S[U])_0 \cong
 (H^r_{\frak{g}}(S)\otimes_S D_i)_0\otimes_R R[U]_{(\fm,U_{ij}-c_{ij})}={\mathcal{Z}}_i^+[U]_{(\fm,U_{ij}-c_{ij})}.
\end{gather*}

Before proceeding we recall the definition of  deformation as in \cite[Definition 2.1]{HU1}. Let $(R,\fb)$ and $(\tilde{R},\tilde{\fb})$ be pairs of Noetherian local rings with ideals $\fb \subseteq R$ and $\tilde{\fb}\subseteq \tilde{R}$, we say that  $(\tilde{R},\tilde{\fb})$ is a deformation of $(R, \fb)$ if there exists a sequence $\A \subseteq \tilde{R}$ which is regular on both $\tilde{R}$ and $\tilde{R}/\tilde{\fb}$ such that $\tilde{R}/\A\cong R$ and $(\tilde{\fb}+ \A)/\A\cong \fb$.


\begin{prop}\label{pgeneric}
 With the notation introduced above. If $I$ satisfies SDC$_1$ condition at level $\min\{s-g-2,r-g\}$ and, $J$ and $\tilde{J}$ are $s$-residual intersections of $I$ and $\tilde{I}$, respectively, then $(\tilde{R}, \tilde{K})$ is a deformation of $(R,K)$, via the sequence $(U_{ij}-c_{ij})$.
\end{prop}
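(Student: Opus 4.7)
The plan is to verify, one by one, the three conditions of deformation from \cite[Definition 2.1]{HU1}: that $\underline{\alpha}:=(U_{ij}-c_{ij})$ is a regular sequence on $\tilde{R}$; that $\tilde{R}/\underline{\alpha}\tilde{R}\cong R$; and that $\underline{\alpha}$ is regular on $\tilde{R}/\tilde{K}$ with $(\tilde{K}+\underline{\alpha})/\underline{\alpha}$ identified with $K$. The first two are immediate from the construction of $\tilde{R}$ as a localization of the polynomial extension $R[U]$ at the maximal ideal $(\fm,U_{ij}-c_{ij})$: shifted indeterminates form a regular sequence in any polynomial extension and survive localization, and $R[U]/(U_{ij}-c_{ij})\cong R$, which after localization at $\fm$ gives $R$ itself.

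For the third requirement I would first upgrade the $\SDC_1$ hypothesis from $R$ to $\tilde{R}$. Since $\tilde{R}$ is faithfully flat over $R$ with regular fiber $\tilde{R}/\fm\tilde{R}$ of dimension $sr$, one has $Z_i(f;\tilde{R})\cong Z_i(f;R)\otimes_R\tilde{R}$ and $\depth_{\tilde{R}}(Z_i(f;\tilde{R}))=\depth_R(Z_i(f;R))+sr$, so $I\tilde{R}$ satisfies $\SDC_1$ at the same level. Proposition \ref{p16} then ensures that both $\mathcal{C}_\bullet$ and $\tilde{\mathcal{C}}_\bullet$ are acyclic, resolving $R/K$ and $\tilde{R}/\tilde{K}$ respectively. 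Using the identification (\ref{zz}) and tracking how the differentials of $\tilde{\mathcal{C}}_\bullet$ are assembled from the universal forms $\tilde{\gamma}_i=\sum_j U_{ij}T_j$ and $\tilde{l}_i=\sum_j U_{ij}f_j$, I would argue that the substitution $U_{ij}\mapsto c_{ij}$ recovers precisely the construction of $\mathcal{C}_\bullet$; consequently $\tilde{\mathcal{C}}_\bullet\otimes_{\tilde{R}}\tilde{R}/\underline{\alpha}\cong\mathcal{C}_\bullet$ as complexes.

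The concluding step is a double-complex argument applied to $\tilde{\mathcal{C}}_\bullet\otimes_{\tilde{R}}K_\bullet(\underline{\alpha};\tilde{R})$. The elements $U_{ij}-c_{ij}$ are shifted indeterminates with respect to the base extension $R\to\tilde{R}$, so they form a regular sequence on $M\otimes_R\tilde{R}$ for every $R$-module $M$; applied row-wise to each $\tilde{\mathcal{Z}}_i^+=\mathcal{Z}_i^+\otimes_R\tilde{R}$, this makes the horizontal homology vanish except in degree zero, where it produces $\mathcal{Z}_i^+$. The surviving complex is $\mathcal{C}_\bullet$, whose homology is $R/K$ concentrated in degree zero. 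Computing the vertical homology first instead uses the acyclicity of $\tilde{\mathcal{C}}_\bullet$, collapsing to the Koszul complex $K_\bullet(\underline{\alpha};\tilde{R}/\tilde{K})$. Comparing the two computations forces $H_i(K_\bullet(\underline{\alpha};\tilde{R}/\tilde{K}))=0$ for $i\geq 1$; since $\underline{\alpha}$ sits in the maximal ideal of the local ring $\tilde{R}$, this implies that $\underline{\alpha}$ is $\tilde{R}/\tilde{K}$-regular, and the degree-zero piece yields $\tilde{R}/(\underline{\alpha}+\tilde{K})\cong R/K$, i.e.\ $(\tilde{K}+\underline{\alpha})/\underline{\alpha}=K$.

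The principal obstacle is the faithful verification that $\tilde{\mathcal{C}}_\bullet\otimes_{\tilde{R}}\tilde{R}/\underline{\alpha}\cong\mathcal{C}_\bullet$ holds as complexes and not merely as graded modules. While (\ref{zz}) handles the underlying modules, the differential $\tau$ of (\ref{*}) is extracted from the horizontal spectral sequence of $\mathcal{C}_{\frak{g}}^{\bullet}(S)\otimes_S\mathcal{D}_\bullet$, and each ingredient of that spectral sequence — the Koszul complex on $\tilde{\gamma}$, the local cohomology $H^r_{\frak{g}}(\tilde{S})\cong H^r_{\frak{g}}(S)\otimes_R\tilde{R}$, and each higher differential — must be checked to be functorial under the base change $\tilde{R}\to R$ given by $U_{ij}\mapsto c_{ij}$, so that passing to the quotient produces exactly the original construction over $R$.
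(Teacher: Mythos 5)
Your proposal follows essentially the same route as the paper: verify the deformation conditions, use Proposition~\ref{p16} for acyclicity of both $\mathcal{C}_\bullet$ and $\tilde{\mathcal{C}}_\bullet$, identify $\tilde{\mathcal{C}}_\bullet\otimes_{\tilde R}\tilde R/(U_{ij}-c_{ij})$ with $\mathcal{C}_\bullet$, and then run the two spectral sequences of the double complex $K_\bullet(U_{ij}-c_{ij};\tilde R)\otimes_{\tilde R}\tilde{\mathcal{C}}_\bullet$ against each other to get $H_t(\mathcal{C}_\bullet)\cong H_t(U_{ij}-c_{ij};\tilde R/\tilde K)$ and hence regularity of the sequence on $\tilde R/\tilde K$.

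The one point where you are more tentative — whether $\tilde{\mathcal{C}}_\bullet\otimes_{\tilde R}\tilde R/\underline{\alpha}\cong\mathcal{C}_\bullet$ as \emph{complexes}, i.e.\ whether $\tilde\tau$ specializes to $\tau$ — is exactly the point the paper handles first, and it handles it by sidestepping $\tau$ rather than by chasing the spectral-sequence differentials as you suggest. Once $\mathcal{C}_\bullet$ is known to be acyclic, the commutative diagram comparing $\mathcal{Z}^+_\bullet\to\coker\varphi_0$ with $\mathcal{Z}^+_\bullet\xrightarrow{\tau}R\to R/\Image\tau$ forces $\mathcal{F}_1=0$, so $K=\Image\tau=\coker\varphi_0$; likewise $\tilde K=\coker\tilde\varphi_0$. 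After that, one only needs to track the honest differential $\varphi_0$ under the specialization $\pi\colon U_{ij}\mapsto c_{ij}$, and there functoriality is manifest: $\pi(K_\bullet(\tilde\gamma;\tilde S))=K_\bullet(\gamma;S)$ gives $\pi(\tilde{\mathcal{D}}_\bullet)=\mathcal{D}_\bullet$ and hence $\pi(\tilde{\mathcal{Z}}^+_\bullet)=\mathcal{Z}^+_\bullet$ by~(\ref{zz}), so $\pi(\tilde K)=K$ with no need to follow higher terms of a spectral sequence through base change. You would do well to insert this step: identifying $K$ with $\coker\varphi_0$ both closes the gap you flagged and makes the complex-level comparison routine. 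Your extra observation that $\SDC_1$ passes from $R$ to the faithfully flat extension $\tilde R$ is correct and is a useful detail that the paper leaves implicit.
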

\begin{proof} The hypotheses of the proposition in conjunction with Proposition \ref{p16} implies that both $\mathcal{C}_{\bullet}$ and
$\tilde{\mathcal{C}}_{\bullet}$ are acyclic. Moreover, as we mentioned in the proof of Theorem \ref{t19}, in the case where $\mathcal{C}_{\bullet}$ (resp.$\tilde{\mathcal{C}}_{\bullet}$) is acyclic $K\cong \Image(\tau)\cong \coker(\varphi_0)$ (resp. $\tilde{K}\cong \Image(\tilde{\tau})\cong \coker(\tilde{\varphi}_0)$). Let $\pi$ be the epimorphism of $\tilde{R}$ to $R$ sending $U_{ij}$ to $c_{ij}$. One has $\pi(K_{\bullet}(\tilde{\gamma};\tilde{S})) = K_{\bullet}(\gamma;S)$, so that $\pi(\tilde{\mathcal{D}}_{\bullet}) = \mathcal{D}_{\bullet}$, and then (\ref{zz}) shows that $\pi(\tilde{\mathcal{Z}}_{\bullet}^+) = \mathcal{Z}_{\bullet}^+$.
This in turn implies that $\pi(\tilde{K}) = K$.

Clearly, the sequence $(U_{ij}-c_{ij})$  is a regular sequence on $\tilde{R}$. Thus to prove that $(\tilde{R},\tilde{K})$ is a deformation of $(R,K)$ it just remains to prove that  $(U_{ij}-c_{ij})$ is a regular sequence on $\tilde{R}/\tilde{K}$. To this end, consider the double complex $K_{\bullet}(U_{ij}-c_{ij};\tilde{R})\bigotimes_{\tilde{R}}\tilde{\mathcal{C}}_{\bullet}$. In view of (\ref{zz}), $(U_{ij}-c_{ij})$ is a regular sequence on $\tilde{\mathcal{C}}_{i}$ for all $i$. Therefore the first terms in the vertical spectral sequence arising from this double complex has the form $\sideset{^{1}}{_{\text\footnotesize{ver}}^{-p,0}}{\E}\cong \mathcal{C}_{p}$ and $\sideset{^{1}}{_{\text\footnotesize{ver}}^{-p,-q}}{\E}=0$ whenever $q \neq 0$, and $\sideset{^{2}}{_{\text\footnotesize{ver}}^{-p,0}}{\E}\cong H_p(\mathcal{C}_{\bullet})$ .
On the other hand since $\tilde{\mathcal{C}}_{\bullet}$ is acyclic, $\sideset{^{1}}{_{\text\footnotesize{hor}}^{0,-q}}{\E}=K_{q}(U_{ij}-c_{ij};\tilde{R}/\tilde{K})$ and $\sideset{^{1}}{_{\text\footnotesize{hor}}^{-p,-q}}{\E}=0$ if $p\neq 0$,
 and $\sideset{^{2}}{_{\text\footnotesize{hor}}^{0,-q}}{\E}= H_q(U_{ij}-c_{ij};\tilde{R}/\tilde{K})$ , for all $q$.
 Hence both spectral sequences abut at second step and this provides an isomorphism $H_t(\mathcal{C}_{\bullet})\cong H_{t}(U_{ij}-c_{ij};\tilde{R}/\tilde{K})$ for all $t$.
Now the result follows from the acyclicity of $\mathcal{C}_{\bullet}$.

\end{proof}

In the issues concerning the residual intersection, there are some slightly weaker condition than the $G_s$ condition. One of these conditoins which we call $G^-_s$ condition  first appeared in \cite{HSV} to prove the acyclicity of the $\mathcal{Z}$ complex. Similar to the $G_s$ condition, we say that an ideal $I$ satisfies the $G^-_s$ condition if $\mu(I_{\fp})\leq \Ht(\fp)+1$ for all $\fp \supseteq I$ with $\Ht(\fp)\leq s-1$. While the $G_s$ condition is equivalent to existence of  geometric $i$-residual intersections for all $i \leq s-1$, the $G^-_s$ conditions is equivalent to the existence of (not necessarily geometric) $i$-residual intersections for all $i \leq s-1$. The next remark is an extension of \cite[3.8]{CU}.


\begin{rem}\label{cgeneric}With the notation and assumptions as in Proposition \ref{pgeneric}. If in addition $I$ satisfies the $G^-_{s+1}$ condition then $K=\pi(\tilde{J})$. In particular, $K$ only depends on $\fa$ and $I$.

\end{rem}
\begin{proof}
 Once we show that the $G^-_{s+1}$ condition of $I$ implies that $\mu(I\tilde{S}/\tilde{\fa})\leq 1$ locally in height $s$, this remark is an immediate consequence of Theorem \ref{t19}(iv) and Proposition \ref{pgeneric}. We avail ourselves of the proof of \cite[Lemma 3.1]{HU1} to show $\mu(I\tilde{S}/\tilde{\fa})\leq 1$.

Let $Q$ be a prime ideal of Spec$(\tilde{S})$ with $\Ht(Q)\leq s$ and  $\fp=Q \cap \tilde{R}$, let $t:=\Ht(\fp)\leq s$. With the same argument as in proof of  \cite[Lemma 3.1]{HU1} we may assume that $\tilde{I}_{\fp}\tilde{S}_Q$ is generated by at most $t+1$ element in $\tilde{S}_Q$ and assume that $U$ is a $(t+1)\times s$ matrix. Therefore the mapping cone of the following diagram, whose rows are free resolutions, provides a free resolution for $\tilde{I}_{\fp}\tilde{S}_Q/\tilde{\fa}\tilde{S}_Q$.
\[ \xymatrix{
\tilde{S}_Q^m\ar[r]^{\Phi}       & \tilde{S}_Q^{t+1}\ar[r]             &\tilde{I}_{\fp}\tilde{S}_Q\ar[r] &0 &  \\
                                 & \tilde{S}_Q^s    \ar[r]\ar[u]^{U}   & \tilde{\fa}\tilde{S}_Q \ar[u] \ar[r] &0 & \\
}
\]
By the Fitting theorem, to prove the assertion, it is enough to show that $I_t(\Phi\mid U) \not \subseteq Q$
and to this end, it is enough to
show that $I_t (U) \not\subseteq Q$.
If, by contrary, we assume that $I_t(U)\subseteq Q$ then $\Ht(I_t (U)_Q)= (s+1-s+1)(s-t+1)+ \Ht(\fp)= 2s-t+1= s+(s-t)+1 \geq s+1$. Which is a contradiction.
\end{proof}


As it can be seen from the proof of Theorem \ref{t19}(iv), we use the local
condition of generators   on $I/ \fa$ to show that there exists an
element $x$ in $R$ such that $\fa +(x)$ and $I$ have the same radical
 in $\mathcal{S}_I$. So that one may wonder to replace the latter
condition to that in Theorem \ref{t19}(iv). Now, it is natural to ask about the properties of
the ideal $\fa \subseteq I$ in $R$ such that $\fa\mathcal{S}_{I}$ has the same radical as $\mathcal{S}_{I+}$.
 By using the same argument as in the proof of Theorem \ref{t19}(iv), it can be shown that if
 $\fa\mathcal{S}_{I}$ and $\mathcal{S}_{I+}$ have the same radical, then $\fa=I$.

Equivalently, we see in Proposition \ref{P113} that the
 symmetric analogue to the ordinary reduction theory  is vacuous. To be more precise,
 for an ideal $I$ in a commutative ring, we say that the ideal $(\gamma)\subseteq \mathcal{S}_{I} $,
 generated by elements of degree $1$, is a symmetric reduction of $I$, if $Sym^{t+1}(I)=(\gamma)Sym^t(I)$ for some integer $t$.
 Notice that if  $I$ is of linear type, this definition and the known definition of reduction coincide.

Here, we provide an elementary proof to Proposition \ref{P113} which is quite general. Let $A$ be a commutative ring with $1$.
\begin{lem}\label{l112}
Let  $X$ be a set of indeterminates and
$B=A[X]$. If $P$ is an ideal generated by linear forms in $B$
whose radical is $(X)$, then $P= (X)$.
\end{lem}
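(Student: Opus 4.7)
The plan is to reduce the problem to the case where $A$ is a field via base change to residue fields, and then settle the field case directly using the polynomial structure of the symmetric algebra.

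First, I would observe that $\sqrt{P}=(X)$ forces $P\subseteq(X)$, so each linear-form generator of $P$ has vanishing constant term, and $P$ is homogeneous of degree one. Writing $V:=\bigoplus_{x\in X}Ax$ and letting $L\subseteq V$ be the $A$-submodule spanned by the linear-form generators, one has $P=LB$, and the canonical identification yields $B/P\cong Sym_A(V)/L\cdot Sym_A(V)\cong Sym_A(V/L)$. Setting $M:=V/L$, the desired conclusion $P=(X)$ becomes the vanishing $M=0$.

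Next, for each maximal ideal $\fm\subseteq A$ with residue field $k:=A/\fm$, I would base-change to $k[X]=B\otimes_A k$. The extended ideal $P_k:=P\cdot k[X]$ is again generated by linear forms (images of the $\ell_\alpha$'s), and combining $P\subseteq(X)$ with $(X)\subseteq\sqrt{P}$ yields $\sqrt{P_k}=(X)\,k[X]$ in $k[X]$. Over the field $k$, $Sym_k(M\otimes_A k)\cong k[X]/P_k$ is a polynomial ring on any $k$-basis of $M\otimes_A k$, hence a domain; but by hypothesis every element of $M\otimes_A k$ is nilpotent in this ring (it lies in the irrelevant ideal, which equals $\sqrt{0}$ by hypothesis), so $M\otimes_A k=0$.

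Finally, $M\otimes_A k=0$ for every residue field $k$ translates into $M=\fm M$ for every maximal ideal $\fm$ of $A$. When $X$ is finite, $M$ is a finitely generated $A$-module, and the standard Nakayama argument applied at each maximal ideal above $\Ann(M)$ then forces $M=0$, whence $L=V$ and $P=(X)$. The main technical hurdle is the identification $B/P\cong Sym_A(V/L)$ together with the stability of the radical equation $\sqrt{P}=(X)$ under base change; once these are in place, the residue-field-plus-Nakayama descent is routine.
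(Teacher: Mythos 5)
Your argument takes a genuinely different route from the paper's. The paper fixes one $x\in X$, writes $x^t=\sum_i q_ip_i$ with $q_i$ homogeneous of degree $t-1$, and tries to extract $x\in P$ from the $x^{t-1}$-coefficients $b_i$ of the $q_i$; your proof instead identifies $B/P$ with $Sym_A(M)$ for $M=V/L$, base-changes to each residue field $k$ of $A$, uses that $Sym_k(M\otimes_A k)$ is a polynomial ring over a field and hence a domain to conclude $M\otimes_A k=0$, and finishes by Nakayama. For finite $X$ this chain of reductions is correct, and arguably more structural and transparent than the paper's coefficient computation.

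The gap is the finiteness restriction itself, and it cannot be removed: the Nakayama step requires $M=V/L$ to be finitely generated, and the lemma as stated for an arbitrary set $X$ is in fact false. Take $A=k[u]_{(u)}$ a DVR with fraction field $K$, and $M=K/A$. Since $M$ is simultaneously torsion and divisible, $M\otimes_A M=0$, so $Sym_A(M)=A\oplus M$ with $M^2=0$; since $A$ is a domain, the nilradical of $Sym_A(M)$ is exactly $M=Sym_A(M)_+$. Choose any free presentation $V\to M$ with kernel $L$ (necessarily $V$ has infinite rank, as $K/A$ is not finitely generated), and set $B=Sym_A(V)=A[X]$, $P=L\cdot B$. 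Then $P$ is generated by linear forms, $\sqrt{P}=(X)$, yet $P\neq(X)$. So restricting to finite $X$ is not merely a limitation of your Nakayama argument but a missing hypothesis in the statement — and this matters, since Proposition \ref{P113} applies the lemma to $Sym_A(I)=A[X]/\mathcal{L}$ for an arbitrary ideal $I$ of an arbitrary commutative ring, where $X$ may well be infinite. (Relatedly, one may check that the paper's own proof — the asserted identity $x^t=\sum b_i x^{t-1}p_i$ — already fails over a field with two variables, e.g. $x^2=(-y)\cdot x+x\cdot(x+y)$ gives $b_1=0$, $b_3=1$ and $\sum b_i x p_i=x^2+xy\neq x^2$; so neither proof as written covers the general case, which is consistent with the lemma needing the finiteness hypothesis.)
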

\begin{proof}

Suppose $x$ is an element of $X$ and $\{p_i\}$ is a set of linear forms generates $P$.
Let $t$ be an integer such that $x^t=\sum_{i=1}^m q_ip_i$ for some integer $m$,  and some $q_i \in B$.
 By homogeneity of $x^t$ and $p_i$, we may assume that each  $q_i$ is homogeneous of degree $t-1$.
 Further if $q_i=b_ix^{t-1}+q'_i$ with $\deg_{x} q'_i < t-1 $, we have $x^t=\sum_{i=1}^m b_ix^{t-1}p_i$. It then follows that
 $x \in P$, since $x^{t-1}$ is a non-zero devisor in $B$.
\end{proof}

\begin{prop}\label{P113}
Let $I$ be an ideal in a commutative ring. Then,
\begin{enumerate}
\item[(i)] $I$ has no proper symmetric reduction, and
\item[(ii)] if $I$ is an ideals of linear type, it has  no (ordinary) proper reduction.
\end{enumerate}
\end{prop}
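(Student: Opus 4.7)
The plan is to reduce (ii) to (i) and to establish (i) by lifting the symmetric reduction to the polynomial ring $S=R[T_1,\ldots,T_r]$ and invoking Lemma \ref{l112}. For (ii), I would use that $I$ is of linear type exactly when the canonical map $\mathcal{S}_I\to\mathcal{R}_I=\bigoplus_{n\geq0}I^n$ is an isomorphism, under which $\mathrm{Sym}^n(I)\cong I^n$. An ordinary reduction $\fa\subseteq I$ satisfying $I^{t+1}=\fa I^t$ then translates to the symmetric reduction condition $\mathrm{Sym}^{t+1}(I)=(\fa\mathcal{S}_I)\mathrm{Sym}^t(I)$; invoking (i) forces $\fa\mathcal{S}_I=\mathcal{S}_{I+}$, and reading this in degree one yields $\fa=I$, ruling out any proper ordinary reduction.

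For (i), consider $\pi:S\twoheadrightarrow\mathcal{S}_I$, $T_i\mapsto f_i$, with $L:=\ker\pi$. I would first recall the standard presentation of the symmetric algebra, which identifies $L$ with the ideal generated by the linear forms $\sum_j c_jT_j$ arising from the $R$-linear syzygies $\sum_jc_jf_j=0$. In particular, $L$ is generated by linear forms and $L\subseteq\frak{g}=(T_1,\ldots,T_r)$. Given a symmetric reduction $(\gamma)=(\gamma_1,\ldots,\gamma_s)\mathcal{S}_I$ with $\gamma_i\in(\mathcal{S}_I)_1$, I would choose linear lifts $\tilde\gamma_i\in S_1$ and set $P:=L+(\tilde\gamma_1,\ldots,\tilde\gamma_s)=\pi^{-1}((\gamma))$. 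Then $P$ is an ideal of $S$ generated entirely by linear forms in the $T_i$, and $P\subseteq\frak{g}$.

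Next, I would feed the reduction hypothesis into $\sqrt{P}$. Iterating $\mathrm{Sym}^{t+1}(I)=(\gamma)\mathrm{Sym}^t(I)$ using the surjectivity of the multiplication $(\mathcal{S}_I)_1\otimes(\mathcal{S}_I)_n\to(\mathcal{S}_I)_{n+1}$ gives $(\mathcal{S}_I)_n\subseteq(\gamma)$ for all $n\geq t+1$; in particular each $f_i^n\in(\gamma)$, whence $T_i^n\in P$ for $n$ large. Thus $\frak{g}\subseteq\sqrt{P}$, and Lemma \ref{l112} -- whose proof only needs that for each variable some power lies in the linear-forms ideal $P$ -- upgrades this to $\frak{g}\subseteq P$. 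Combined with $P\subseteq\frak{g}$ this yields $P=\frak{g}$, and applying $\pi$ gives $(\gamma)=\pi(P)=\pi(\frak{g})=\mathcal{S}_{I+}$, so $(\gamma)$ cannot be a proper symmetric reduction.

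The main obstacle is verifying that the lift $P$ remains generated by linear forms, since this is precisely the hypothesis needed to invoke Lemma \ref{l112}. This rests on the well-known description of $\ker\pi$ as being generated by the $R$-linear syzygies of $f_1,\ldots,f_r$; once this is in hand, the symmetric reduction hypothesis plugs directly into the lemma and the rest of the argument is immediate.
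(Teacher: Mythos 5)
Your proof is correct and follows essentially the same route as the paper: lift the symmetric reduction to a linear-forms ideal in the polynomial ring over $R$, use the reduction hypothesis to show its radical is the irrelevant ideal, and invoke Lemma \ref{l112}; part (ii) then reduces to (i) via the identification $\mathcal{S}_I \cong \mathcal{R}_I$ for linear-type ideals. The only cosmetic difference is that you work with the finite presentation $S=R[T_1,\ldots,T_r]$, whereas the paper phrases the argument with an arbitrary set $X$ of indeterminates so that the statement covers non-finitely-generated ideals as well.
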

\begin{proof}
(i) Let $\fa \subseteq I$ be two ideals of a commutative ring $A$ such that $\fa Sym_A(I)$ is a symmetric reduction of $I$.
 $ Sym_A(I)=A[X]/\mathcal{L}$, where $X$ is a set of
indeterminates and  $\mathcal{L}$ is an ideal of linear forms in
$A[X]$. If we denote the preimage of $\fa Sym_A(I)$ in $A[X]$ by $\fa'$, then
the assumptions imply that the radical ideal of
$\fa'+\mathcal{L}$ is $(X)$. Now, the results follows from Lemma \ref{l112}.
(ii)  is an immediate consequence of (i),  since for ideals of linear
type symmetric reductions and ordinary reductions coincide.
\end{proof}

\begin{rem}\label{r114}
 To the best of our knowledge, the fact that ideals of linear type have no proper
 reduction is based on the second analytic deviation and it is
 known in the case where $A$ is a Noetherian local ring. Here, the only assumption is "commutative".

\end{rem}

\newpage
\section{Castelnuovo-Mumford Regularity Of Residual Intersections }
Our  goal in this section is to estimate the regularity of
residual intersections of the ideal $I$, whenever $I$ satisfies some
sliding depth conditions. We use two approaches to this end. One
is based on the resolution of residual intersections which was
already introduced in Theorem \ref{t19}- the complex
$\mathcal{C}_\bullet$; and the second is based on the structure of
the canonical module of the residual intersections- the work of
C.Huneke and B.Ulrich in the local case \cite[2.3]{HU}.
 The benefit of studding the regularity in the second way is that in this way
we obtain a necessary and sufficient condition  for when the regularity gets our proposed upper bound.

Throughout this section $ R=\bigoplus _{n \geq 0} R_{n}$ is a
is a positively graded *local Noetherian ring of dimension $d$
with the graded maximal ideal $\fm$ where the base ring $
R_{0}$ is a  local ring with maximal ideal $ \fm_{0}$. $I$ and
$\fa$ are graded ideals of $R$ generated by homogeneous
elements $f_1,\cdots,f_r$ and $l_1,\cdots,l_s$, respectively,
such that $\deg f_t=i_t$ for all $1 \leq t \leq r$ with $i_1
\geq \cdots \geq i_r$ and $\deg l_{t}=a_{t}$ for $1\leq t \leq
s$. For a graded ideal $\frak{b}$, the sum of the degrees of a
minimal generating set of $\frak{b}$ is denoted by
$\sigma(\frak{b})$.  Keep other notations as in section
2.
 We first recall the definition of the Castelnuovo-Mumford
 regularity.

 \begin{defn}\label{dcmregularity}
 If $M$ is a finitely generated graded $R$-module, the
 Castelnuovo-Mumford regularity  of $M$ is defined as
  $\Reg (M) := \max  \{\END  (H^i_{R_+}(M))+i\}.$

 As an analogue, we define the regularity with respect to the maximal ideal $\fm$, as $\Reg _{\fm}(M) := \max  \{\END
 (H^i_{\fm}(M))+i\}.$
 \end{defn}

In the course of the proof of Theorem \ref{t22} we shall several times use Proposition \ref{pregm}.
This proposition has its own interest as it establishes a relation between $\Reg _{\fm}(M)$ and $\Reg (M)$.
In the proof we shall use the following two elementary  lemmas.


\begin{lem}\label{llc}Suppose that $(A,\fm)$ is a Noetherian local ring  and denote   the Matlis dual $\Hom_A(-,E_A(A/\fm))$ by $^{\upsilon}$. Let $M$ and $N$ be two $A$-module such that $\Dim(N^{\upsilon})>\Dim(M^{\upsilon})$. If $\phi: M\to N$ is an $A$-homomorphism, then $\Dim((\coker \phi)^{\upsilon})=\Dim(N^{\upsilon})$.
\end{lem}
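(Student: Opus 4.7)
The plan is to exploit the exactness and contravariance of the Matlis duality functor $(-)^{\upsilon} = \Hom_A(-,E_A(A/\fm))$, together with the standard fact that in a short exact sequence of $A$-modules the dimension of the middle term is the maximum of the dimensions of the outer two terms.

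First, I would start from the right-exact sequence $M \xrightarrow{\phi} N \to \coker\phi \to 0$ and dualize. Because $E_A(A/\fm)$ is an injective cogenerator, this yields a left-exact sequence $0 \to (\coker\phi)^{\upsilon} \to N^{\upsilon} \to M^{\upsilon}$. Let $I$ denote the image of the map $N^{\upsilon}\to M^{\upsilon}$; then $I$ is a submodule of $M^{\upsilon}$, and I split the sequence above into the short exact sequence
\begin{equation*}
0 \to (\coker\phi)^{\upsilon} \to N^{\upsilon} \to I \to 0.
\end{equation*}

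Next, I would invoke two elementary dimension facts: a submodule has dimension at most that of the ambient module, so $\Dim(I) \leq \Dim(M^{\upsilon})$; and in any short exact sequence the dimension of the middle module equals the maximum of the dimensions of the outer two. Applied to the display above this gives $\Dim(N^{\upsilon}) = \max\{\Dim((\coker\phi)^{\upsilon}),\,\Dim(I)\}$. Combining with the strict inequality $\Dim(M^{\upsilon}) < \Dim(N^{\upsilon})$ in the hypothesis forces $\Dim(I) < \Dim(N^{\upsilon})$, so the maximum must be achieved by the other summand, yielding $\Dim((\coker\phi)^{\upsilon}) = \Dim(N^{\upsilon})$.

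There is essentially no real obstacle here; the only potentially delicate point is to be clear about what dimension means for the Matlis dual (which a priori is an $\hat{A}$-module), and to make sure the conventional ``dimension of the middle equals max of the outer two'' behaves well in that setting. This is straightforward since Matlis duality does not change the support (up to passage to $\hat{A}$) and the dimension of a module over a Noetherian local ring behaves additively enough on short exact sequences for the argument above to go through verbatim.
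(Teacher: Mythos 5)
Your proof is correct: dualizing the right-exact sequence $M \to N \to \coker\phi \to 0$ against the injective cogenerator $E_A(A/\fm)$ gives the left-exact sequence $0 \to (\coker\phi)^{\upsilon} \to N^{\upsilon} \to M^{\upsilon}$, and then the short exact sequence with the image $I \subseteq M^{\upsilon}$ together with $\Supp N^{\upsilon} = \Supp(\coker\phi)^{\upsilon} \cup \Supp I$ forces $\Dim((\coker\phi)^{\upsilon}) = \Dim(N^{\upsilon})$ since $\Dim I \le \Dim M^{\upsilon} < \Dim N^{\upsilon}$. The paper itself states this lemma without proof (labeling it elementary), so there is nothing to compare against; your argument is the natural and correct one, and your parenthetical remark about defining dimension via support (so that the max-of-outer-two identity holds even for non-finitely-generated Matlis duals) correctly addresses the only genuine subtlety.
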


\begin{lem}\label{dimcohd}Suppose that $(A,\fm)$ is a Noetherian complete local ring  and denote   the Matlis dual $\Hom_A(-,E_A(A/\fm))$ by $^{\upsilon}$. Let $M$ be a finitely generated $A$-module. Then $\dim (H^i_\fm (M)^\upsilon)\leq i$ for all $i$, and
equality holds for $i=\dim M$.
\end{lem}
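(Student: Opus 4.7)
The strategy is to reduce to a complete regular local ground ring and invoke local duality, then extract the dimension bounds from Auslander--Buchsbaum. Since $(A,\fm)$ is complete, Cohen's structure theorem yields a surjection $\pi:S\twoheadrightarrow A$ from some complete regular local ring $(S,\fn)$ of dimension $n$. Via $\pi$ one views $M$ as a finitely generated $S$-module; restriction of scalars gives $H^i_\fm(M)\cong H^i_\fn(M)$, and since $E_S(S/\fn)=E_A(A/\fm)$ the Matlis dual $^\upsilon$ agrees whether computed over $A$ or over $S$. Dimensions of modules as $A$- and as $S$-modules also coincide, so it suffices to prove the two claims for $M$ viewed over $S$.

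Since $S$ is Gorenstein of dimension $n$, local duality gives a natural isomorphism $H^i_\fn(M)^\upsilon\cong\Ext^{n-i}_S(M,S)$, reducing the problem to showing $\dim_S\Ext^{n-i}_S(M,S)\leq i$, with equality for $i=\dim M$. For the inequality, I would pick any $P\in\Supp_S\Ext^{n-i}_S(M,S)$; then $\Ext^{n-i}_{S_P}(M_P,S_P)\neq 0$, forcing $\mathrm{pd}_{S_P}(M_P)\geq n-i$. The Auslander--Buchsbaum formula over the regular local ring $S_P$ yields $n-i\leq\mathrm{pd}_{S_P}(M_P)=\depth(S_P)-\depth_{S_P}(M_P)\leq\Ht(P)$, so $\dim(S/P)=n-\Ht(P)\leq i$. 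Taking the supremum over $P$ in the (finitely generated) support of $\Ext^{n-i}_S(M,S)$ gives the desired bound.

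For equality at $i=d:=\dim M$, I would choose a minimal prime $P$ of $\Ann_S(M)$ with $\dim(S/P)=d$. Then $M_P$ is a nonzero $S_P$-module of finite length, hence $\depth_{S_P}(M_P)=0$, and Auslander--Buchsbaum forces $\mathrm{pd}_{S_P}(M_P)=\Ht(P)=n-d$. Since $S_P$ is regular, local duality over $S_P$ identifies $\Ext^{n-d}_{S_P}(M_P,S_P)^\upsilon\cong H^0_{PS_P}(M_P)=M_P\neq 0$, so $P\in\Supp_S\Ext^{n-d}_S(M,S)$, forcing $\dim\geq d$ and hence equality. (Alternatively, minimality of a free $S_P$-resolution of $M_P$ of length $n-d$ shows directly that the top cohomology of the dualized complex is nonzero.)

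The proof is essentially textbook material, so I do not anticipate a substantive obstacle; the only point requiring care is the compatibility of the various change-of-rings identifications at the start (Matlis dual, local cohomology, dimension, support), all of which hold because $\fm=\fn\cdot A$ and $A/\fm=S/\fn$.
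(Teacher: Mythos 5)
Your proof is correct. The paper states this lemma (together with Lemma~\ref{llc}) without proof, referring to both as ``elementary,'' so there is no paper argument to compare against; your route --- Cohen presentation $S\twoheadrightarrow A$ with $S$ complete regular, local duality $H^i_\fn(M)^\upsilon\cong\Ext^{n-i}_S(M,S)$, and Auslander--Buchsbaum at each prime of the support --- is the standard one. Two small points worth tightening if you write it up: the identification of Matlis duals is not $E_S(S/\fn)=E_A(A/\fm)$ literally (the former is not an $A$-module) but rather the adjunction $\Hom_A(N,\Hom_S(A,E_S(S/\fn)))\cong\Hom_S(N,E_S(S/\fn))$ for $A$-modules $N$; and the equality case tacitly assumes $M\neq 0$, which you should note (the case $M=0$ being vacuous).
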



 \begin{prop}\label{pregm} Assume  that $R$ is CM *local and let $M$ be a finitely generated graded $R$-module. Then
 \begin{center}
$ \Reg(M)\leq \Reg_{\fm}(M)\leq \Reg(M)+\Dim(R_0)$.
 \end{center}
 \end{prop}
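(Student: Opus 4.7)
The natural tool here is the Grothendieck (composition-of-functors) spectral sequence
\[
E_2^{p,q}=H^p_{\fm_0 R}\bigl(H^q_{R_+}(M)\bigr)\Longrightarrow H^{p+q}_\fm(M),
\]
coming from the identity $\Gamma_\fm=\Gamma_{\fm_0 R}\circ\Gamma_{R_+}$ (both functors pick out exactly the elements annihilated by some power of $\fm = R_+ + \fm_0 R$). A key observation is that $\fm_0$ is generated by elements of degree $0$, so local cohomology at $\fm_0 R$ can be computed graded-piece by graded-piece: $H^p_{\fm_0 R}(N)_j=H^p_{\fm_0}(N_j)$ for every graded $R$-module $N$. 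In particular $E_2^{p,q}=0$ for $p>\dim R_0$ and $\End E_2^{p,q}\leq \End H^q_{R_+}(M)$.

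The upper bound then follows immediately: every graded piece of $H^n_\fm(M)$ is a subquotient of some $E_\infty^{p,q}\subseteq E_2^{p,q}$ with $p+q=n$ and $p\leq\dim R_0$, so
\[
\End H^n_\fm(M)+n\leq\bigl(\End H^q_{R_+}(M)+q\bigr)+p\leq\Reg(M)+\dim R_0.
\]

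For the lower bound my plan is to pick $i$ to be the \emph{largest} index with $\End H^i_{R_+}(M)+i=\Reg(M)$, set $e:=\End H^i_{R_+}(M)$, and consider the nonzero finitely generated $R_0$-module $N:=(H^i_{R_+}(M))_e$, of dimension $d_0$ over $R_0$. Lemma \ref{dimcohd} applied to $N$ over $R_0$ gives $H^{d_0}_{\fm_0}(N)\neq 0$, so the class in $(E_2^{d_0,i})_e=H^{d_0}_{\fm_0}(N)$ is nonzero and, by the equality case of that lemma, has Matlis-dual dimension exactly $d_0$. The maximality of $i$ kills all incoming differentials in degree $e$, because their sources $(H^{i+r-1}_{R_+}(M))_e$ vanish for $r\geq 2$. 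To rule out the outgoing differentials $d_r\colon(E_r^{d_0,i})_e\to(E_r^{d_0+r,i-r+1})_e$ I would combine Lemma \ref{dimcohd} (bounding the Matlis-dual dimension of the target via $H^{d_0+r}_{\fm_0}(\,\cdot\,)$) with Lemma \ref{llc} (which in dual form says that once a target's dual dimension drops strictly below the source's, the kernel of the map retains the source's full dual dimension); iterating through every page gives $(E_\infty^{d_0,i})_e\neq 0$, whence $(H^{d_0+i}_\fm(M))_e\neq 0$ and $\Reg_\fm(M)\geq (d_0+i)+e\geq i+e=\Reg(M)$.

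The main obstacle is precisely this last step of the lower bound: making the differential bookkeeping tight enough that Lemma \ref{llc} applies at every page. The generic bound $\dim H^{d_0+r}_{\fm_0}(\,\cdot\,)^\vee\leq d_0+r$ coming from Lemma \ref{dimcohd} is by itself too weak, so one has to pass to the $\fm_0$-adic completion of $R_0$ (to make Matlis duality a genuine exact duality between Artinian and Noetherian modules) and very likely refine the extremal choice by maximising the quantity $p+q+j$ across nonzero $(E_2^{p,q})_j$ rather than just the pair $(i,e)$; the careful tracking of Matlis-dual dimensions through the spectral sequence is where the proof will be most delicate.
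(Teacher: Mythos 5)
Your upper-bound argument is correct and identical to the paper's: same composed-functor spectral sequence $H^p_{\fm_0}(H^q_{R_+}(M)) \Rightarrow H^{p+q}_\fm(M)$, same observation that $\fm_0$ sits in degree $0$ so that local cohomology at $\fm_0 R$ is computed graded-piece by graded-piece, and the vanishing $H^p_{\fm_0}=0$ for $p>\dim R_0$. Nothing to add there.

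For the lower bound you have the right framework (same spectral sequence, Lemmas \ref{llc} and \ref{dimcohd}, completion of $R_0$ to make Matlis duality well-behaved), and you are right that your first choice of pivot position $(d_0,i)$ in degree $e$ does not work: the outgoing differential targets $H^{d_0+r}_{\fm_0}\bigl((H^{i-r+1}_{R_+}(M))_e\bigr)$ can have Matlis-dual dimension as large as $d_0+r$, and Lemma \ref{llc} then gives nothing. What you leave as a conjecture — refine the maximizer — is in fact exactly what is needed, and your specific suggestion works: take $(p_0,q_0,j_0)$ maximizing $p+q+j$ over all triples with $(E_2^{p,q})_j=H^{p}_{\fm_0}\bigl(H^{q}_{R_+}(M)_j\bigr)\neq 0$. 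With that choice, note that the roles of the differentials flip relative to what you describe. Every outgoing $d_r$ has $E_2$-target $H^{p_0+r}_{\fm_0}(H^{q_0-r+1}_{R_+}(M)_{j_0})$, whose triple sums to $p_0+q_0+j_0+1>m$, so it is zero outright — no dimension count needed. It is the \emph{incoming} differentials whose sources are a priori nonzero; these are subquotients of $H^{p_0-r}_{\fm_0}(H^{q_0+r-1}_{R_+}(M)_{j_0})$, which by Lemma \ref{dimcohd} has dual dimension at most $p_0-r<p_0$. Since $(p_0+1,q_0,j_0)$ would give sum $m+1$, maximality also forces $p_0=\dim_{R_0}\bigl(H^{q_0}_{R_+}(M)_{j_0}\bigr)$, so Lemma \ref{dimcohd} gives dual dimension exactly $p_0$ at $(E_2^{p_0,q_0})_{j_0}$; Lemma \ref{llc} then propagates this through each page (the page-$r$ term being the cokernel of the incoming map), yielding $(E_\infty^{p_0,q_0})_{j_0}\neq 0$ and hence $\Reg_\fm(M)\geq p_0+q_0+j_0=m\geq\Reg(M)$. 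This is close in spirit to the paper's proof, which instead fixes the internal degree $\mu=\Reg(M)-i$ first and then maximizes the $\fm_0$-cohomological degree $\delta$ over pairs $(j,t)$ with $j+t\geq i$, $t\leq i$; your single global maximization is, if anything, slightly cleaner. So the gap in your write-up is real (the lower bound is not finished as written, and the obstruction sits on the incoming, not outgoing, differentials once the pivot is chosen correctly), but the refinement you propose does close it.
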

 \begin{proof}Considering the $\fm_0$-adic completion of $R_0$, $\widehat{R_0}$, we may pass to the CM *complete *local ring $\widehat{R_0}\otimes_{R_0}R$ via the natural homomorphism $R \to \widehat{R_0}\otimes_{R_0}R $; so that in the proof we assume  $R$ admits a canonical module; see \cite[15.2.2]{BS}.

 To prove $\Reg_{\fm}(M)\leq \Reg(M)+\Dim(R_0)$, we consider the composed functor spectral sequence
 $H^p_{\fm_0}(H^q_{R_+}(M))\Rightarrow H^{p+q}_{\fm}(M)$. Let $i$ be an integer.
 Notice that  for all $p > \Dim (R_0)$, $H^p_{\fm_0}(-)=0$, and also if
  $\rho >\Reg(M)+\Dim(R_0)-i$  and $p+q=i$ where $p\leq \Dim (R_0)$,
   then $\rho >\End(H^q_{R_+}(M))+q+\Dim(R_0)-i=\End(H^q_{R_+}(M))+\Dim(R_0)-p\geq
   \End(H^q_{R_+}(M))$. Now, the result follows from the facts that for any integer $q$, $H^q_{R_+}(M)_{\rho}$ is an $R_0$-module, hence $H^p_{\fm_0}(H^q_{R_+}(M)_{\rho})\cong H^p_{\fm_0}(H^q_{R_+}(M))_{\rho}$(c.f. \cite[13.1.10]{BS}); so that we have the following convergence of the components of the above spectral sequence $H^p_{\fm_0}(H^q_{R_+}(M)_{\rho})\Rightarrow
 H^{p+q}_{\fm}(M)_{\rho}$.

 To show that $ \Reg(M)\leq \Reg_{\fm}(M)$, we consider the composed cohomology modules $H^p_{\fm_0}(H^q_{R_+}(M))$ as second terms of the horizontal spectral sequence arising from the double complex $\mathcal{C}_{\fm_0}(R_0)\otimes_{R_0}\mathcal{C}_{R_+}(M)$. As usual, we put this double complex in the third quadrant in the coordinate plane with $\mathcal{C}_{\fm_0}^0(R_0)\otimes_{R_0}\mathcal{C}_{R_+}^r(M)$ at the origin, where $r=\max\{j:H^j_{R_+}(M)\neq 0\}$. So that,  $\sideset{^{2}}{_{\text\footnotesize{hor}}^{-p,-q}}{\E}=H^{q}_{\fm_0}(H^{r-p}_{R_+}(M))$.
 Now, let $i$ be an integer such that $H^i_{R_+}(M)_{\mu}\neq 0$ for $\mu=\Reg(M)-i$.

 Let $\delta=\max\{j:H^{j}_{\fm_0}(H^{t}_{R_+}(M)_{\mu})\neq 0,~~ t \leq i \text{~~and~~} j+t \geq i\}$. Notice that $H^i_{R_+}(M)_{\mu}$ is a finitely generated non-zero $R_0$-module, that is there exists an integer $j$ such that $H^{j}_{\fm_0}(H^{i}_{R_+}(M)_{\mu})\neq 0$, hence $\delta \geq 0$. Let $t\leq i$ be an integer for which $H^{\delta}_{\fm_0}(H^{t}_{R_+}(M)_{\mu})\neq 0$, by definition of $\delta$, $(\sideset{^{2}}{_{\text\footnotesize{hor}}^{-(r-p),-q}}{\E})_{\mu}=0$ for all $p \leq i$ and $q \geq \delta+1$. Thus
 $(\sideset{^{\ell}}{_{\text\footnotesize{hor}}^{-(r-t),-\delta}}{\E})_{\mu}=\coker (\phi_{\ell} )$ with
 $$
 \phi_\ell :=(\sideset{^{\ell}}{_{hor}^{-(r-t)+\ell-1,-\delta+\ell}}{\D})_\mu :
 (\sideset{^{\ell}}{_{\text\footnotesize{hor}}^{-(r-t)+\ell-1,-\delta+\ell}}{\E})_{\mu} \to N_\ell :=(\sideset{^{\ell}}{_{\text\footnotesize{hor}}^{-(r-t),-\delta}}{\E})_{\mu}
 $$

On the one hand, $(\sideset{^{\ell}}{_{\text\footnotesize{hor}}^{-(r-t)+\ell-1,-\delta+\ell}}{\E})_{\mu}^{\upsilon}$ is a subquotient of
the module
$$
(\sideset{^{2}}{_{\text\footnotesize{hor}}^{-(r-t)+\ell-1,-\delta+\ell}}{\E})_{\mu}^{\upsilon}=(H^{\delta -\ell}_{\fm_0} (H^{t-\ell +1}_{R_+}(M)_\mu))^\upsilon
$$
which has dimension at most $\delta -\ell<\delta$ for any $\ell\geq 2$, by Lemma \ref{dimcohd}.

On the other hand, $N_2=H^{\delta}_{\fm_0} (H^{t}_{R_+}(M)_\mu)$, so that $(N_2)^\upsilon$ has dimension $\delta$ by Lemma \ref{dimcohd}.

As $N_{\ell +1}=\coker (\phi_\ell )$, it then follows from Lemma \ref{llc}, by recursion on $\ell$,  that $\Dim((N_{\ell})^{\upsilon})=\delta$ for all $\ell \geq 2$, in particular $(\sideset{^{\infty}}{_{\text\footnotesize{hor}}^{-(r-t),-\delta}}{\E})_{\mu}\neq 0$. Now, the convergence of the spectral sequence, $H^p_{\fm_0}(H^q_{R_+}(M))\Rightarrow H^{p+q}_{\fm}(M)$, implies that $ H^{\delta+t}_{\fm}(M)_{\mu}\neq 0$. Therefore $\Reg_{\fm}(M)\geq \End(H^{\delta+t}_{\fm}(M))+\delta+t\geq \mu+\delta+t\geq \mu +i=\Reg(M)$, as desired.
 \end{proof}


The next proposition follows along the same lines as  the proof of Proposition \ref{pregm}. Since this proposition is not used in the sequel, we will not details the required variations. Part (i) of this proposition was already proved in the articles of E.Hyry \cite{Hy} and N.V.Trung\cite{T}.
\begin{prop}\label{phyry}With the same notations as in  Proposition \ref{pregm}.
\begin{itemize}
\item[(i)]$\max\{\End(H^i_{R_+}(M))\}=\max\{\End(H^i_{\fm}(M))\}$.
\item[(ii)]$H^p_{\fm_0}(H^q_{R_+}(M))=0$ for all integers $p$ and $q$ with $p+q>\Dim(M)$.
\end{itemize}
\end{prop}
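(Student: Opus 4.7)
The plan is to run the same spectral-sequence argument used in the proof of Proposition \ref{pregm} on each graded component, altering only the selection rule for the auxiliary pair $(\delta,t)$. As there, we may (and do) complete $R_0$ at $\fm_0$ and assume $R$ is a $\ast$complete $\ast$local Cohen-Macaulay ring, so that Matlis duality is available. Each $H^q_{R_+}(M)_\mu$ is then a finitely generated $R_0$-module, and the composed-functor spectral sequence associated to $\mathcal{C}^\bullet_{\fm_0}(R_0) \otimes_{R_0} \mathcal{C}^\bullet_{R_+}(M)$ reads, in each degree $\mu$,
\[
E_2^{p,q}(\mu) \;=\; H^p_{\fm_0}\!\left(H^q_{R_+}(M)_\mu\right) \;\Longrightarrow\; H^{p+q}_\fm(M)_\mu.
\]

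For (i), the inequality $\max_i \End(H^i_\fm(M)) \leq \max_q \End(H^q_{R_+}(M))$ is immediate from convergence: if $\mu$ exceeds the right-hand side, every $H^q_{R_+}(M)_\mu$ vanishes, hence so does every $E_2^{p,q}(\mu)$, and therefore every $H^i_\fm(M)_\mu$. For the reverse direction, I would fix $\mu := \max_q \End(H^q_{R_+}(M))$; the set $\{(p,q) : E_2^{p,q}(\mu) \neq 0\}$ is nonempty and finite, and I would choose $(\delta,t)$ maximizing $p+q$ in it. Outgoing differentials from $(\delta,t)$ at stage $\ell \geq 2$ land at positions of total degree $\delta + t + 1$ and so vanish by maximality. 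Moreover $\delta = \Dim(H^t_{R_+}(M)_\mu)$, because otherwise $d := \Dim(H^t_{R_+}(M)_\mu)$ would strictly exceed $\delta$, giving $E_2^{d,t}(\mu) \neq 0$ on a strictly larger diagonal. The equality clause of Lemma \ref{dimcohd} then yields $\Dim((E_2^{\delta,t}(\mu))^\upsilon) = \delta$, while incoming differentials at stage $\ell$ come from terms whose Matlis-dual dimension is at most $\delta - \ell < \delta$; iterating Lemma \ref{llc} exactly as in Proposition \ref{pregm} keeps $(E_\infty^{\delta,t}(\mu))^\upsilon$ of dimension $\delta$, hence nonzero. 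Convergence yields $H^{\delta + t}_\fm(M)_\mu \neq 0$, giving $\max_i \End(H^i_\fm(M)) \geq \mu$.

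For (ii), I would argue by contradiction: assume $H^{p_0}_{\fm_0}(H^{q_0}_{R_+}(M)) \neq 0$ for some $p_0 + q_0 > \Dim M$, pick $\mu$ realizing this in degree $\mu$, and this time maximize $p + q$ over the nonempty set $\{(p,q) : E_2^{p,q}(\mu) \neq 0 \text{ and } p + q > \Dim M\}$ to obtain $(\delta,t)$. The same two checks go through: if $d := \Dim(H^t_{R_+}(M)_\mu)$ were strictly greater than $\delta$, then $(d,t)$ would still satisfy $d + t > \Dim M$ and violate maximality, so $\delta = d$ and $\Dim((E_2^{\delta,t}(\mu))^\upsilon) = \delta$; outgoing differentials land on the diagonal $\delta + t + 1 > \Dim M$ and vanish by maximality; incoming differentials come from positions whose Matlis-dual dimension is at most $\delta - \ell$. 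Iterating Lemma \ref{llc} once more keeps $(E_\infty^{\delta,t}(\mu))^\upsilon$ of dimension $\delta$, hence nonzero, and convergence forces $H^{\delta + t}_\fm(M)_\mu \neq 0$, contradicting Grothendieck's vanishing since $\delta + t > \Dim M$.

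The main technical nuance---the same one that drives Proposition \ref{pregm}---is arranging that $(E_2^{\delta,t}(\mu))^\upsilon$ has dimension exactly $\delta$ rather than merely $\leq \delta$, since it is this equality that propagates through the higher differentials via Lemma \ref{llc}. Choosing $(\delta,t)$ maximizing $p + q$ (subject in (ii) to the extra constraint $p + q > \Dim M$) is precisely what activates the equality clause of Lemma \ref{dimcohd} and makes both parts fall out of one common template.
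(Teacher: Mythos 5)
Your proof is correct and runs exactly the template the paper itself points to when it says that Proposition \ref{phyry} ``follows along the same lines as the proof of Proposition \ref{pregm}.'' The only genuine departure is the selection rule for the pair $(\delta,t)$: whereas the proof of Proposition \ref{pregm} maximizes the $\fm_0$-degree subject to the side constraints $t\leq i$ and $\delta+t\geq i$ (constraints tailored to the regularity comparison), you simply maximize the total degree $p+q$ (in (ii), restricted to $p+q>\Dim M$). This is a cleaner choice for the statements at hand, and it still triggers the equality clause of Lemma \ref{dimcohd} and the recursion via Lemma \ref{llc} in the same way, so the argument goes through.
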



We are  ready to present our main result on the regularity of
residual intersections.
\begin{thm}\label{t22}
Suppose that $(R, \fm)$ is CM *local, $I$ is a homogeneous ideal which satisfies
$\SD_1$, $J=\fa:I$ is an $s$-residual intersection of $I$, with $\fa$ homogeneous, and $I/
\fa$ is generated by at most one element locally in height $s$, then
\begin{center}
 $\Reg (R/J) \leq \Reg( R) + \Dim(R_0)+\sigma( \fa)-(s-g+1)\beg(I/\fa)-s $.
\end{center}

\end{thm}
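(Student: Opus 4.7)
The plan is to use the resolution $\mathcal{C}_\bullet \to R/J \to 0$ provided by Theorem \ref{t19}(iv), to bound the Castelnuovo-Mumford regularity termwise with respect to the maximal ideal $\fm$, and then to transfer to $\Reg(R/J)$ using Proposition \ref{pregm}. Since all data are homogeneous, acyclicity questions may be tested after localizing at $\fm$. The hypothesis $\SD_1$ on $I$ implies $\SDC_2$ by Proposition \ref{p21}, and hence $\SDC_1$ at every level; combined with the local condition on $\mu(I/\fa)$, this makes Theorem \ref{t19}(iv) applicable, so $\mathcal{C}_\bullet$ is an acyclic complex of graded $R$-modules of length $s$ resolving $R/J$, with $\mathcal{C}_0 = R$ and $\mathcal{C}_j = \mathcal{Z}^+_{j-1}$ for $1\leq j\leq s$.

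The central reduction is the standard spectral sequence estimate
$$\Reg_{\fm}(R/J) \;\leq\; \max_{0\leq j\leq s}\bigl\{\Reg_{\fm}(\mathcal{C}_j) - j\bigr\},$$
obtained from the double complex $\mathcal{C}^\bullet_{\fm}(R)\otimes_R\mathcal{C}_\bullet$, whose horizontal $E_2^{0,-q}$ collapses onto $H^q_{\fm}(R/J)$ by acyclicity while the vertical $E_1$ terms are $H^i_{\fm}(\mathcal{C}_j)$. Combined with the inequality $\Reg(R/J)\leq \Reg_{\fm}(R/J)$ from Proposition \ref{pregm}, this reduces the theorem to a termwise bound on each $\Reg_{\fm}(\mathcal{C}_j)$. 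The term $\mathcal{C}_0 = R$ contributes $\Reg_{\fm}(R) \leq \Reg(R)+\Dim(R_0)$, which is dominated by the right-hand side of the theorem once $\sigma(\fa)-(s-g+1)\beg(I/\fa)-s\geq 0$ (the degenerate case is handled separately), so the real work lies in the cycle summands appearing in $\mathcal{C}_j$ for $j\geq 1$.

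Next, I would unwind the definition $\mathcal{Z}^+_\bullet = H^r_{\frak{g}}(\mathcal{D}_\bullet)_0$ with $\mathcal{D}_\bullet = \Tot(\mathcal{Z}'_\bullet\otimes_S K_\bullet(\gamma;S))$, applying Lemma \ref{l11}(ii) to commute $H^r_{\frak{g}}$ with the tensor products. Each $\mathcal{Z}^+_{j-1}$ then decomposes as a direct sum of shifted Koszul cycle modules $Z_k(f;R)(-n)$ (together with free summands coming from the tail of $\mathcal{Z}'_\bullet$), where the shift $n$ combines three contributions: (i) a sum of $a_t=\deg l_t$ from the $p$-th wedge of $K_\bullet(\gamma;S)$, eventually accumulating to $\sigma(\fa)$ at the top of $\mathcal{C}_\bullet$; (ii) the $\deg f_t$ carried by the Koszul basis in $\mathcal{Z}'_\bullet$; and (iii) the degree-$0$ projection under $H^r_{\frak{g}}$. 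Ordering the generators of $I$ so that the $r-s$ generators outside $\fa$ are of degree at least $\beg(I/\fa)$, the bottom $s-g+1$ Koszul positions of $\mathcal{Z}^+_\bullet$ acquire a cumulative shift of $(s-g+1)\beg(I/\fa)$ from these extra generators, which appears with a negative sign after the $H^r_{\frak{g}}$-dualisation; combined with $\sigma(\fa)$ and the $-j$ in the spectral sequence bound, this is what produces the combination $\sigma(\fa)-(s-g+1)\beg(I/\fa)-s$.

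To close, I would bound $\Reg_{\fm}(Z_k(f;R))$ by combining the exact sequences $0\to Z_k\to K_k\to B_{k-1}\to 0$ and $0\to B_k\to Z_k\to H_k(f;R)\to 0$ with the depth estimates on $H_k(f;R)$ supplied by $\SD_1$; this gives vanishing of the low local cohomology of $Z_k$ in the required range, and hence $\Reg_{\fm}(Z_k(f;R))\leq \Reg(R)+\Dim(R_0)$ after accounting for $\Reg_{\fm}(R(-n)) = \Reg(R)+\Dim(R_0)+n$ on free summands. The main obstacle is the previous paragraph: keeping the $S$-grading (in which $H^r_{\frak{g}}(-)_0$ is computed) cleanly separate from the internal $R$-grading carried by the $f_t$ and $l_t$, and verifying that after this projection the remaining shifts condense into the compact expression $\sigma(\fa)-(s-g+1)\beg(I/\fa)$ rather than a looser bound. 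The coefficient $s-g+1$ in particular reflects the length of the modified tail $\mathcal{F}$ of $\mathcal{Z}'_\bullet$ over $\mathcal{Z}_\bullet$, and its precise appearance requires a careful bookkeeping of the truncation $\mathcal{F}$ used to construct $\mathcal{Z}'_\bullet$ from the Koszul resolution of the top cycles.
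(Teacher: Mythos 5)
Your plan correctly identifies the two workhorses — the resolution $\mathcal{C}_\bullet$ from Theorem \ref{t19}(iv) and the comparison $\Reg\leq\Reg_\fm\leq\Reg+\dim R_0$ from Proposition \ref{pregm} — and the double complex $\mathcal{C}^\bullet_\fm(R)\otimes_R\mathcal{C}_\bullet$ is indeed the paper's starting point. But the central reduction you propose, the termwise bound
$\Reg_\fm(R/J)\leq\max_{0\leq j\leq s}\{\Reg_\fm(\mathcal{C}_j)-j\}$,
is too coarse and cannot give the theorem. You notice this yourself for $\mathcal{C}_0=R$: it contributes $\Reg_\fm(R)$, which is only dominated by the target when $\dim R_0+\sigma(\fa)-(s-g+1)\beg(I/\fa)-s\geq 0$. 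That is not a degenerate side case — e.g.\ if every $a_t=1$ and $\beg(I/\fa)=1$ the excess is $g-s-1<0$ — and you offer no separate argument for it. The paper's proof never uses a termwise regularity estimate. It exploits that $R/J$ is Cohen--Macaulay (so only $H^{d-s}_\fm(R/J)$ is relevant), and then uses the depth constraints on the $\mathcal{C}_j$ coming from $\SD_1$ (via $\SDC_2$) to show that on the antidiagonal $q-p=d-s$ the only possibly non-zero $E_2$-term of the vertical spectral sequence is at $(p,q)=(s,d)$; in particular $H^{d-s}_\fm(\mathcal{C}_0)=H^{d-s}_\fm(R)=0$ because $R$ is CM and $s>0$, so $\mathcal{C}_0$ simply does not contribute. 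This pinpointing of a single surviving position is what allows a bound strictly below $\Reg_\fm(R)$.

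There is a second, related, gap in your last paragraph. You assert that the exact sequences relating $Z_k$, $B_k$, $K_k$, together with the sliding depth hypothesis, give $\Reg_\fm(Z_k(f;R))\leq\Reg(R)+\dim R_0$. Sliding depth (and the derived $\SDC$ conditions) control the \emph{vanishing} of the low local cohomology of $Z_k$, i.e.\ its depth; they say nothing about the $\End$ of the non-vanishing $H^i_\fm(Z_k)$, which is what a regularity bound requires. The paper never needs such a bound on $\Reg_\fm(Z_k)$: having isolated the single term $\sideset{^{2}}{_{\text\footnotesize{ver}}^{-s,-d}}{\E}$, it computes its $\End$ directly by graded local duality over the bigraded ring $S$, using that $D_{r+s-1}$ is a \emph{free} $S$-module (so that duality against $\omega_S$ is exact on it) and dominating the cokernel of $\Hom_S(\theta,\omega_S)$ by the cokernel of the Koszul differential $\Hom_S(\delta_s^\gamma,\omega_S)$. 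It is in that dualized cokernel, not in a sum of cycle regularities, that the combination $\sigma(\fa)-(s-g+1)\beg(I/\fa)$ emerges, and the coefficient $s-g+1$ comes from the $g-1$ free summands $\mathcal{Z}'_{r-1}$ paired against the top Koszul exterior power — a bookkeeping your sketch gestures at but does not carry out. So the proposal would need to be restructured around the vanishing pattern on the antidiagonal and the $S$-local duality computation rather than a termwise regularity estimate.
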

\begin{proof}
 Considering the $\fm_0$-adic completion of $R_0$, $\widehat{R_0}$, the fact that the natural homomorphism $R \to \widehat{R_0}\otimes_{R_0}R $ is  faithfully flat enables us to pass to the CM *complete *local ring $\widehat{R_0}\otimes_{R_0}R$ via this homomorphism; so that in the proof we assume that $R$ admits a graded canonical module.

 The assumptions of the theorem completely fulfill what that is
needed for Theorem \ref{t19}(iv). Thus  $R/J$ is CM and resolved by $\mathcal{C}_{\bullet}$.

Before continuing, we just notice that in  case where $g=1$,
there is no free $R$-module in the tail of
$\mathcal{C}_\bullet$, that is
$\mathcal{C}_s=Z_{r-g}^+=Z_{r-1}^+$. Nevertheless, the coming
proof will be the same for both cases.

 We consider the  diagram of the double complex
$\mathcal{C}_{\fm}^\bullet(R)\bigotimes_R\mathcal{C}_\bullet$,
where  $\mathcal{C}_{\fm}^\bullet(R)$ is the \v{C}ech complex
with respect to $R$ and $\fm$; as usual we put this double complex in the third quadrant with $\mathcal{C}_{\fm}^0(R)\otimes_R \mathcal{C}_0$ at the origin.

By the acyclicity of $\mathcal{C}_\bullet$, we have

$$ \sideset{^{2}}{_{\text\footnotesize{hor}}^{-p,-q}}{\E} =
\left\lbrace
           \begin{array}{c l}
              H^{d-s}_{\fm}(R/J)  & \text{  $p=0$ and $q=d-s$, }\\
              0  & \text{  otherwise.}
           \end{array}
         \right. $$

The fact that $\mathcal{C}_i$ is free for $i\geq s-g+2$ in
conjunction with Proposition \ref{p21} implies that
$ \sideset{^{2}}{_{\text\footnotesize{ver}}^{-p,-q}}{\E} = H^{q}_{\fm}(\mathcal{C}_p)$ is zero if one of the following holds

\begin{itemize}
\item{$p = 0 , q \neq d$.}
\item{$1 \leq p \leq s-g+1$ and $q-p \leq d-s$.}
\item{$p \geq s-g+2$ and $q \neq d$.}
\end{itemize}

It follows that the  only non-zero module $\sideset{^{2}}{_{\text\footnotesize{ver}}^{-p,-q}}{\E}$
with $q-p=d-s$  is $\sideset{^{2}}{_{\text\footnotesize{ver}}^{-s,-d}}{\E}$.
Hence
$H^{d-s}_{\fm}(R/J)=\sideset{^{\infty}}{_{\text\footnotesize{hor}}^{0,-(d-s)}}{\E} =
\sideset{^{\infty}}{_{\text\footnotesize{ver}}^{-s,-d}}{\E}
\subseteq\sideset{^{2}}{_{\text\footnotesize{ver}}^{-s,-d}}{\E} $, and  $\End (H^{d-s}_{\fm}(R/J)) \leq \End (
\sideset{^{2}}{_{\text\footnotesize{ver}}^{-s,-d}}{\E}) $. We now have to estimate $\End
(\sideset{^{2}}{_{\text\footnotesize{ver}}^{-s,-d}}{\E} )$ to
bound the regularity of $R/J$.

In order to estimate $\End
(\sideset{^{2}}{_{\text\footnotesize{ver}}^{-s,-d}}{\E} )$, we need to review the construction of the tail of
$\mathcal{C}_{\bullet}$. The ring $S$, introduced in the first
section, has a  structure as a positively bigraded algebra.
Considering $R$ as a subalgebra of $S$, we write the
degrees of an element $x$ of $R$ as the 2-tuple $(\Deg x,0)$with the second entry
zero. So, let $\deg f_t=(i_t,0)$ for all $1 \leq t \leq r$,
$\deg l_t=(a_t,0)$ for all $1 \leq t \leq s$, $\deg
T_t=(i_t,1)$ for all $1 \leq t \leq s$, and thus $\deg
\gamma_t=(a_t,1)$ for all $1 \leq t \leq s$. With these
notations the $\mathcal{Z}$-complex has the following shape
\begin{center}
$\mathcal{Z}_\bullet: 0\rightarrow
Z_{r-1}\bigotimes_{R}S(0,-r+1) \rightarrow \cdots \rightarrow
Z_1\bigotimes_{R}S(0,-1) \rightarrow
Z_0\bigotimes_{R}S\rightarrow 0.$
\end{center}
Consequently,
 \begin{center}
$\mathcal{Z}'_{r-1}=R(-\sum_{t=1}^{r}
i_t,0)\bigotimes_{R}(\bigoplus_{i=1}^{g-1} S(0,-r+i))$,
\end{center}
and, taking into account that $a_1,\cdots,a_s$ is a minimal generating
set of $\fa$,
\begin{center}
$D_{r+s-1}=R(-\sum_{t=1}^{r}
i_t,0)\bigotimes_{R}(\bigoplus_{i=1}^{g-1} S(-\sigma
(\fa),-s-r+i))=R(-\sum_{t=1}^{r} i_t - \sigma (\fa)
,0)\bigotimes_{R}(\bigoplus_{i=1}^{g-1} S(0,-s-r+i)).$
\end{center}

By definition of $\mathcal{C}_{\bullet}$,
$\mathcal{C}_i=H^r_{\frak{g}}(D_{r+i-1})_{(\ast,0)}$, where by
degree $(\ast,0)$, we mean degree zero in the second entry and
anything in the first entry, in other word degree zero in $S$
with it's natural grading. Hence, as in the proof of
Proposition \ref{pregm} it follows from the composed functor spectral
sequence that
$H^d_{\fm}(H^r_{\frak{g}}(D_{r+i-1})_{(\ast,0)})\cong
H^{r+d}_{\frak{g}+\fm}(D_{r+i-1})_{(\ast,0)}$.

Then,
$\sideset{^{2}}{_{\text\footnotesize{ver}}^{-s,-d}}{\E}=\Ker
(H^{r+d}_{\frak{g}+\fm}(D_{r+s-1})\longrightarrow
H^{r+d}_{\frak{g}+\fm}(D_{r+s-2}))_{(\ast,0)}$. Let $\omega_R$
be the graded canonical module of $R$, then $\omega_S$ exists
and is equal to $\omega_R[T_1,\cdots,T_r](-\sum_{t=1}^r
i_t,-r)$. If $^{\upsilon}$ denotes the Matlis dual,
$\Hom_{R_0}(-,E_0(R/\fm))$, it then  follows from the graded
local duality theorem that
$\sideset{^{2}}{_{\text\footnotesize{ver}}^{-s,-d}}{\E}=
(\coker(\Hom_S(D_{r+s-2},\omega_S)\ra
\Hom_S(D_{r+s-1},\omega_S)))^{\upsilon}_{(\ast,0)}$. Therefore
$\End(\sideset{^{2}}{_{\text\footnotesize{ver}}^{-s,-d}}{\E})=
-\beg(\coker(\Hom_S(D_{r+s-2},\omega_S)\ra\Hom_S(D_{r+s-1},\omega_S))_{(\ast,0)})$.

Now, recall that the map $\theta:D_{r+s-1}\ra D_{r+s-2}$, in
the tail of the complex $\mathcal{D}_{\bullet}$, is defined by
the $2\times1$ matrix $\left(\begin{matrix}
\delta_{s}^{\gamma}\otimes\mathcal{Z'}_{r-1}\\
 \delta'\otimes K_s(\gamma;S)\\
\end{matrix}\right),$
 where
 $\delta_{s}^{\gamma}$, is the last map in the
Koszul complex $K_{\bullet}(\gamma;S)$ and $\delta'$ is the
most-left map in $\mathcal{Z'}_{\bullet}$. So that there exists
an epimorphism from
$\coker(\Hom_S(\delta_{s}^{\gamma},\omega_S))$ to
$\coker(\Hom_S(\theta,\omega_S))$ which yields
that\begin{center}
$-\beg(\coker(\Hom_S(\theta,\omega_S))_{(\ast,0)}) \leq
-\beg(\coker(\Hom_S(\delta_{s}^{\gamma},\omega_S))_{(\ast,0)})$.
\end{center}
Thus to get an upper bound for the regularity, we need to
estimate the latter initial degree. According to the above
mentioned construction of $D_{r+s-1}$ and $\omega_S$, we have


\begin{gather*}
\Hom_S(D_{r+s-1},\omega_S)\\
=\Hom_S(\bigoplus_{i=1}^{g-1}S(-\sum_{t=1}^{r} i_t - \sigma
(\fa),-s-r+i),\omega_R[T_1,\cdots,T_r](-\sum_{t=1}^r i_t,-r))\\
=\Hom_S(\bigoplus_{i=1}^{g-1}S,\omega_R[T_1,\cdots,T_r])(\sigma
(\fa),s-i)\\
  =\bigoplus_{i=1}^{g-1}\Hom_S(S,\omega_R[T_1,\cdots,T_r])(\sigma
(\fa),s-i).
\end{gather*}

Notice that $\Hom_S(\delta_{s}^{\gamma},\omega_S)$ is in fact the
first homomorphism in the Koszul complex
$\Hom_S(K_{\bullet}(\gamma;S),\omega_S)$, therefore\\

\begin{gather*}
\coker(\Hom_S(\delta_{s}^{\gamma},\omega_S))_{(\ast,0)}=
\bigoplus_{i=1}^{g-1}\left(
\frac{\omega_R[T_1,\cdots,T_r]}{(\gamma)\omega_R[T_1,\cdots,T_r]}(\sigma
(\fa),s-i)\right)_{(\ast,0)}\\
=\bigoplus_{i=1}^{g-1}\left(\omega_R(\sigma(\fa),0)\bigotimes_R
\frac{S}{(\gamma)}(0,s-i)\right)_{(\ast,0)}\\
=\bigoplus_{i=1}^{g-1}\left(\omega_R(\sigma(\fa),0)\bigotimes_R
\left(\frac{S}{(\gamma)}\right)_{(\ast,s-i)}\right).
\end{gather*}
At the moment, let $i_n=\beg(I/\fa)$, in this case for all $i <
i_n$, $I_i=\fa_i$ thus $T_1,\cdots,T_{n-1}\in(\gamma)$; so that
$$\left(\frac{S}{(\gamma)}\right)_{(\ast,s-i)}= \bigoplus_{\A _n+\cdots\A
_r=s-i}\left(\frac{(\gamma)+RT_n^{\A _n}\ldots T_r^{\A _r}}{(\gamma)}\right).$$
It then follows that
\begin{gather*}
\beg(\coker(\Hom_S(\delta_{s}^{\gamma},\omega_S))_{(\ast,0)})\\
=\beg\left(\bigoplus_{i=1}^{g-1}\left(\omega_R(\sigma(\fa),0)\bigotimes_R
\left(\frac{S}{(\gamma)}\right)_{(\ast,s-i)}\right)\right)\\
=\min_{i=1}^{g-1}\left\{\beg\left(\omega_R(\sigma(\fa),0)\bigotimes_R
\left(\frac{S}{(\gamma)}\right)_{(\ast,s-i)}\right)\right\}  \\
\geq \beg(\omega_R(\sigma(\fa)))+\min_{i=1}^{g-1}\left\{\beg\left(\bigoplus_{\A
_n+\cdots\A _r=s-i}\left(\frac{(\gamma)+RT_n^{\A _n}\ldots T_r^{\A _r}}{(\gamma)}\right)\right)\right\}\\
\geq \beg(\omega_R(\sigma(\fa)))+(s-g+1)i_n \\
\geq -\Reg(R)+d-\Dim(R_0)-\sigma(\fa)+(s-g+1)\beg(I/\fa),
\end{gather*}
where the last inequality follows from Proposition \ref{pregm}.
 It shows that


 \begin{gather*}
\End(H^{d-s}_{\fm}(R/J))\leq
-\beg(\coker\Hom_S(\delta_{s}^{\gamma},\omega_S))_{(\ast,0)}))\\
\leq \Reg(R)-d+\Dim(R_0)+\sigma(\fa)-(s-g+1)\beg(I/\fa).
\end{gather*}
Again according to Proposition \ref{pregm}, we have $\Reg(R/J) \leq
\End(H^{d-s}_{\fm}(R/J)) +d-s$ which in conjunction with the
above inequality implies that,
$$\Reg(R/J)\leq \Reg(R)+\Dim(R_0)+\sigma(\fa)-(s-g+1)\beg(I/\fa)-s.$$
\end{proof}

 We recall that in the case of linkage, that is when $s=g$, if
 in addition one has $\dim(R_0)=0$, then the inequality in  Theorem \ref{t22} is in fact an equality.
  However when $\Dim(R_0)\neq 0$, the next simple example shows that, in some cases, the
  regularity of residual intersections (or even  linked ideals)
   may be strictly less than the proposed formula.
\begin{example} Let $R_0:=\mathbb{K}[x]_{(x)}$ and $R:=R_0[y]$.
In this case let $I=(y)$, $\fa=(xy)$ and $J=(x)$ be ideals of
$R$. It is now easy to see that $I$ is linked to $J$ by $\fa$.
Therefore the invariants mentioned in Theorem \ref{t22} are
determined as follow, $\Reg(R)=\Reg(R/I)=\Reg(R/J)=0$,
$\Dim(R_0)=1$, $\sigma(\fa)=1$, $s-g+1=1$, $\beg(I/\fa)=1$, and
$\beg(J/\fa)=0$. Therefore the formula is the equality for
$R/J$ and an strict  inequality for $R/I$.
\end{example}

\newpage
\section{Graded Canonical Module of Residual Intersections }


In this section, assume in addition that
$R=\bigoplus_{i=0}^{\infty}R_i$ is a standard positively graded
Noetheian  ring, such that the base ring $(R_0,\fm_0)$ is
Aritinian local with infinite residue field. As well, the ideals $\fa$, $I$, and $J$ assumed to be homogeneous.

Although in our approach to residual intersection we completely remove the $G_s$ condition,
in the presence of the $G_s$ condition, if $R$ is Gorenstein and $R_0$ is an
Artinian local ring with infinite residue field, the graded structure  of the canonical
module of residual intersection can be determined, due to \cite[2.3]{HU}.
Using the structure of the graded canonical module in this situation, we can exactly determine
when the upper bound obtained for the regularity of  residual intersection can be achieved.

 The properties of ideals $\fa$ such that the ideal $\fa:I$ is a residual intersection
 of $I$, have been already studied in some other senses, see for example \cite[4.2]{PU}.
 Here, we concentrate on this point of view to get a homogeneous version of Artin-Nagata's key lemma \cite[lemma 2.3]{AN}.

\begin{defn}\label{l23}
Suppose that $J=\fa:I$ is an (geometric) $s$-residual intersection of
$I$. We say that \textit{$\fa$ has an A-N homogeneous generating
set}, if there exists a homogeneous generating set $
a_1,\cdots,a_s$ of $\fa$ such that $(a_1,\cdots,a_i):I$ is an
(geometric) $i$-residual intersection of $I$ for all $s\geq i\geq
g$.
\end{defn}

As an example, if $J$ is (geometrically) linked to $I$, that is,
when $s=\Ht I$, then the ideal $\fa$ has an A-N homogeneous
generating set. As well, we shall see in Lemma \ref{Lulrich} that if
$I$ is a homogeneous ideal of the CM ring $R$ which  satisfies
$G_\infty$, then, for any residual intersection $\fa:I$ of $I$,
$\fa$ has an A-N homogeneous generating set. Also a generic
residual intersection (if it exists) provides an example of A-N homogeneous
generating set, \cite{HU}. The following lemma is needed in the
proof of Lemma \ref{Lulrich}. We recall that a proof of this lemma in
(non-graded) local case is given in \cite[Theorem 5.8]{M} and \cite[Lemma 1.3]{U}. Also
\cite[2.5]{CEU} detailing what one can imagine about
Lemma \ref{Lulrich}, however, for the proof of \cite[2.5]{CEU} a proof
for Lemma \ref{Lprime} seems indispensable.


\begin{lem}\label{Lprime}
 Let $M=\bigoplus_{j\in {\mathbb{Z}}} M_i$ be a finitely generated graded
 $R$-module minimally generated by homogeneous elements of degrees $d_1 \geq \cdots \geq d_s$ . Then for any finite set of  homogeneous prime ideals
 $\mathcal{P}=\{\frak{p_1},\cdots,\frak{p_n}\}$, there exists a homogeneous element $x \in M$ of degree $d_1$ such that for all $1\leq i
 \leq n$, $\mu((M/(Rx))_{\frak{p_i}})= \max\{0,
 \mu((M)_{\frak{p_i}})-1\}$.
\end{lem}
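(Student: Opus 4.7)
The plan is to realize $x$ through a graded prime-avoidance argument inside the $k$-vector space $M_{d_1}/\fm_0 M_{d_1}$, where $k = R_0/\fm_0$ is infinite by hypothesis. First I would translate the conclusion into a non-membership statement: introducing the $R_0$-submodule
\[
B_i := \{y \in M_{d_1} \mid y/1 \in \fp_i M_{\fp_i}\} \subseteq M_{d_1},
\]
one checks via Nakayama that $\mu((M/Rx)_{\fp_i}) = \max\{0,\mu(M_{\fp_i})-1\}$ holds if and only if either $M_{\fp_i}=0$ (which is automatic) or the image of $x$ in $M_{\fp_i}/\fp_i M_{\fp_i}$ is nonzero, i.e.\ $x \notin B_i$. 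Thus it is enough to produce a homogeneous $x \in M_{d_1}$ outside the finite union $\bigcup_i B_i$, the union ranging over those indices with $M_{\fp_i} \neq 0$.

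The next step is to show that every such $B_i$ is a \emph{proper} $R_0$-submodule of $M_{d_1}$, by a short case analysis on $\fp_i$. If $\fp_i$ equals the graded maximal ideal $\fm$, then $m_1$, being a minimal homogeneous generator of degree $d_1$, has nonzero image in $M/\fm M$, so $m_1 \notin B_\fm$. If $\fp_i \neq \fm$, then $\fp_i$ does not contain $R_+$---indeed the only homogeneous prime containing $R_+$ is $\fm$, because $R_0$ is Artinian local---and the standard-graded hypothesis produces some $r \in R_1 \setminus \fp_i$. Choosing any generator $m_j$ whose image in $M_{\fp_i}/\fp_i M_{\fp_i}$ is nonzero (one exists since the $m_j$'s generate $M_{\fp_i} \neq 0$), the element $r^{d_1-d_j}m_j$ is homogeneous of degree $d_1$ and has nonzero image in $M_{\fp_i}/\fp_i M_{\fp_i}$, hence lies outside $B_i$.

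Finally, since $\fm_0$ is nilpotent, $\fm_0 \subseteq \fp_i$ for every homogeneous prime $\fp_i$, so $\fm_0 M_{d_1} \subseteq B_i$. The images $\overline{B_i}$ are therefore proper subspaces of the finite-dimensional $k$-vector space $V := M_{d_1}/\fm_0 M_{d_1}$, and the standard fact that a vector space over an infinite field is not a finite union of proper subspaces yields an element of $V$ outside $\bigcup \overline{B_i}$; any homogeneous lift $x \in M_{d_1}$ satisfies the conclusion.

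I anticipate the main obstacle to be the second case above, namely when $\fp_i \neq \fm$ and no minimal generator of $M_{\fp_i}$ happens to have degree $d_1$: the remedy is to boost a lower-degree generator into degree $d_1$ by multiplication with a suitable power of an element of $R_1 \setminus \fp_i$. This step essentially combines the standard-graded hypothesis on $R$ with the peculiarity that, because $R_0$ is Artinian local, every homogeneous prime other than $\fm$ fails to contain $R_+$.
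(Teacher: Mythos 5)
Your proof is correct and follows essentially the same route as the paper: for each $\fp_i\neq\fm$ you boost a minimal generator of nonzero residue into degree $d_1$ using an element of $R_1\setminus\fp_i$ (available because $R_0$ is Artinian local, so $\fm$ is the unique homogeneous prime containing $R_+$), handle $\fp_i=\fm$ via $m_1$, and then pass to the $R_0/\fm_0$-vector space $M_{d_1}/\fm_0 M_{d_1}$ and use prime avoidance over the infinite residue field. The only cosmetic difference is that you observe $\fm_0 M_{d_1}\subseteq B_i$ directly from nilpotence of $\fm_0$, whereas the paper invokes Nakayama to conclude $M_{d_1}\neq M^i_{d_1}+\fm_0 M_{d_1}$, but these amount to the same thing.
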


\begin{proof}We first note that, for a graded $R$-module $L$ and a
homogeneous prime ideal $\fp$, $\mu(L_{(\fp)})=\mu(L_{\fp})$,
where $L_{(\fp)}$ is the homogeneous localization of $L$ at $\fp$.
Therefore in the course of the proof we deal with the homogeneous
localization instead of the usual localization. We may also
assume that $(M)_{\frak{p_i}}\neq 0$ for every $1\leq i
 \leq n$. Let $M^i$ be the preimage of $\frak{p_i}M_{(\frak{p_i})}
\text{~in~} M$. Our aim is to show that  $M_{d_1} \backslash
\bigcup_{i=1}^{n} (M^i)\neq \emptyset$.

Let $m_1,\cdots,m_l$ be a homogeneous minimal generating set of
$M$ with $\Deg m_i= d_i$. If $\fm \in \mathcal{P}$, say $\fm =
\fp_1$, then $m_1 \notin M^1$, since $m_1,\cdots,m_l$ is a
minimal generating set of $M$. For another element $\fp_i \in
\mathcal{P} \backslash \{\fm\}$, as $M \neq M^i $, there exists
$i_j \in \{1,\cdots, s\}$ such that $m_{i_j} \notin M^i$. Hence,
if $c_i= d_{1} - d_{i_j}$, since $(R_0,\fm_0)$ is Artinian and
$R$ is a standard positively graded ring, we have
$R_{c_i}\backslash (\fp_i)_{c_i} \neq \emptyset$. Now, for any
$r_i \in R_{c_i}\backslash (\fp_i)_{c_i} $, $r_im_{i_j} \in
M_{d_1}\backslash M^i$.

Therefore for all $1\leq i \leq n$, $ M_{d_1}\neq M_{d_1}^i$, in
particular by NAK's lemma $ M_{d_1}\neq M_{d_1}^i + \fm_0
M_{d_1}$. Now taking into account that $R_0/\fm_0$ is an infinite
field, we have $ M_{d_1}\neq \bigcup_{i=1}^n (M_{d_1}^i + \fm_0
M_{d_1})$. In particular, $ M_{d_1}\backslash \bigcup_{i=1}^n
M_{d_1}^i \neq \emptyset$, as desired.
\end{proof}

\begin{rem}\label{r25}Keep the same assumptions as in Lemma \ref{Lprime}.
\begin{enumerate}
\item[(i)] If $\fm \notin \mathcal{P}$, then for any $d\geq d_1$
an element of degree $d$ exists such that satisfies the
assertion of the lemma. Indeed in this case, for any $\fp
\in\mathcal{P}$ and $c\geq 0$, $R_c \neq \fp_c$-the fact which is
needed for the proof.
\item[(ii)] If $(R_0,\fm_0)$ is not Artinian, then Lemma \ref{Lprime}
is no longer true. As a counter-example, suppose that
$(R_0,\fm_0)$ is a Noetherian local ring and that $\fp$ and $\fq
$ are two non-maximal prime ideals of $R_0$. Let $X$ be an
indeterminate. Consider $M=R_0/\fp \bigoplus R_0/\fq(-1)$ as a
graded $R=R_0[X]$-module by trivial multiplication. Under these
circumstances for $\mathcal{P}=\{\fp+(X),\fq+(X)\}$, there exists
no appropriate $x$ desired by the lemma.
\end{enumerate}
\end{rem}


\begin{lem}\label{Lulrich}
If $I$ satisfies $G_s$ and $J=\fa:I$ is an $s$-residual
intersection of $I$, then $\fa$ has an A-N homogeneous generating
set.
\end{lem}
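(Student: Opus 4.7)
The plan is to adapt the classical Artin--Nagata construction to the graded setting, with the graded prime-avoidance Lemma~\ref{Lprime} playing the role of ordinary prime avoidance. I would fix a homogeneous minimal generating set of $\fa$ and construct $a_1,\ldots,a_s$ by upward induction on $i$ from $g$ to $s$, at each step choosing $a_i$ as a homogeneous element of $\fa$ while maintaining two invariants: (a)~$(a_1,\ldots,a_{i-1}):I$ is an $(i-1)$-residual intersection, and (b)~$a_1,\ldots,a_{i-1}$ together with the homogeneous ``remaining'' part of an updated generating set still generate $\fa$, so that the generating property of $\fa$ persists throughout the construction.

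At the inductive step, the target height condition $\Ht((a_1,\ldots,a_i):I)\geq i$ is equivalent to $IR_\fp=(a_1,\ldots,a_i)R_\fp$ for every homogeneous prime $\fp$ of height $\leq i-1$. By invariant~(a), this holds automatically except for a finite set $\Lambda_i$ of homogeneous primes of height exactly $i-1$ that contain $(a_1,\ldots,a_{i-1}):I$. For $\fp\in\Lambda_i$ containing $I$, the condition $G_s$ gives $\mu(I_\fp)\leq i-1$, which together with a refined tracking of (a) --- namely, that $a_1,\ldots,a_{i-1}$ extend to a minimal generating set of $I_\fp$ --- forces $(I/(a_1,\ldots,a_{i-1}))_\fp$ to be cyclic; for $\fp\in\Lambda_i$ with $I\not\subseteq\fp$, one simply needs $a_i\notin\fp$. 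Both cases are uniformly expressible by the requirement that the image of $a_i$ in the graded module $M:=\fa/(a_1,\ldots,a_{i-1})$ be a minimal generator of $M_\fp$ for each $\fp\in\Lambda_i$.

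Lemma~\ref{Lprime} applied to $M$ and $\Lambda_i$ then produces exactly such an $a_i$, homogeneous of the top degree of a minimal generating set of $M$, and its quantitative conclusion $\mu((M/(Ra_i))_\fp)=\max\{0,\mu(M_\fp)-1\}$ both delivers the height improvement and allows invariant~(b) to be updated for the next step. The main obstacle is the tension between choosing $a_i$ in a single prescribed homogeneous degree and simultaneously avoiding every prime in $\Lambda_i$ while remaining a minimal generator of $M$: in the ungraded local case this is routine prime avoidance, but in the graded case Lemma~\ref{Lprime} is indispensable, and Remark~\ref{r25} shows that its hypothesis that $R_0$ be Artinian with infinite residue field cannot be dropped. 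After $s$ iterations the constructed $a_1,\ldots,a_s$ form the desired A--N homogeneous generating set of $\fa$.
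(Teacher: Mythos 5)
Your proposal tracks essentially the paper's (very terse) proof, which defers to Ulrich's inductive construction in \cite[Lemma 1.4]{U}, substituting Lemma~\ref{Lprime} for the local prime-avoidance step and noting that the finite set $Q$ of ``bad'' primes encountered there consists entirely of homogeneous primes; your unpacking of the inductive step, the role of $G_s$ at height-$(i-1)$ primes containing $I$, and the way Lemma~\ref{Lprime} picks the next homogeneous generator is exactly what the paper leaves to the reader. One detail you should make explicit: the finite set of primes fed to Lemma~\ref{Lprime} at each step must include $\fm$ itself in addition to $\Lambda_i$, since Lemma~\ref{Lprime} only controls $\mu\bigl((M/Ra_i)_{\fp}\bigr)$ for $\fp$ in the chosen set, and it is the inclusion of $\fm$ that forces $a_i$ to be a genuine minimal generator of $\fa/(a_1,\ldots,a_{i-1})$ (so that your invariant (b) survives) and is the reason the element must be taken in the top degree $d_1$; the paper signals this by writing ``Applying Lemma~\ref{Lprime} with $\mathcal{P}=\{\fm\}$.''
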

\begin{proof}
 Applying Lemma \ref{Lprime} with $\mathcal{P}=\{\fm\}$. The proof is similar to that of \cite[ 1.4]{U},
  we only replace lemma 1.3 in the proof of \cite[Lemma 1.4]{U}, by
  Lemma \ref{Lprime} and note that the set $Q$ employed in \cite[ 1.4]{U}
  entirely consists of homogeneous prime ideals.

\end{proof}


The next lemma is the base in the inductive construction of the
canonical module of residual intersection. A proof of this lemma
can be found in \cite[2.1]{HU} or \cite[2.1]{U}. Here we give a
slightly different proof.

\begin{lem} \label{lw}
Let $R$ be CM and let $\omega_R$ be its canonical module. Let $I$
be a homogeneous ideal of height $g$ such that
$\omega_R/I\omega_R$ is CM and let
$\alpha=\alpha_1,\cdots,\alpha_g$ be a maximal homogeneous
regular sequence in $I$.  Set $J=(\alpha) : I$. Then $R/J$ is CM
and $\omega_{R/J}=\dfrac{I\omega_R}{(\alpha)
\omega_R}(\sigma((\alpha)))$.
\end{lem}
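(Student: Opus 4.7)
The plan is to dualize a short exact sequence involving the canonical module, using the regular sequence $\alpha$ to pass to the Cohen--Macaulay quotient $\bar{R}:=R/(\alpha)$. Since $\omega_R$ is a maximal Cohen--Macaulay $R$-module, the sequence $\alpha$ is also regular on $\omega_R$, so $\omega_R/(\alpha)\omega_R$ is maximal CM over $\bar{R}$; a standard Koszul computation with $K_\bullet(\alpha;R)$ identifies it as $\omega_{\bar{R}}(-\sigma)$, where $\sigma := \sigma((\alpha))$.

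With this in hand I would consider the exact sequence of $\bar{R}$-modules
\[
0 \to I\omega_R/(\alpha)\omega_R \to \omega_R/(\alpha)\omega_R \to \omega_R/I\omega_R \to 0.
\]
The right-hand term is CM of dimension $d-g = \dim \bar{R}$ by hypothesis, and the middle term is maximal CM over $\bar{R}$ by the previous step, so the depth lemma forces $I\omega_R/(\alpha)\omega_R$ to be maximal CM over $\bar{R}$ as well. I then apply $\Hom_{\bar{R}}(-,\omega_{\bar{R}})$; since all three terms are maximal CM, the higher $\Ext$'s vanish and the dualized sequence is exact. The middle dual is $\Hom_{\bar{R}}(\omega_{\bar{R}}(-\sigma),\omega_{\bar{R}}) = \bar{R}(\sigma)$, and the map to the right-hand dual is induced by multiplication.

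The crux is to identify the kernel of this multiplication map, namely $\Hom_{\bar{R}}(\omega_R/I\omega_R,\omega_{\bar{R}})$, with $\bar{J}(\sigma) := (J/(\alpha))(\sigma)$. Concretely, this amounts to the identity $(\alpha)\omega_R :_R I\omega_R = (\alpha) :_R I = J$. The inclusion $\supseteq$ is immediate; the reverse inclusion rests on the fact that $\End_R(\omega_R) = R$ (a standard property of canonical modules over CM rings), which forces $(\alpha)\omega_R :_R \omega_R = (\alpha)$ and then propagates to the general equality. Once this is in place the dualized sequence reads
\[
0 \to \bar{J}(\sigma) \to \bar{R}(\sigma) \to \Hom_{\bar{R}}(I\omega_R/(\alpha)\omega_R,\omega_{\bar{R}}) \to 0,
\]
so the third term is $(\bar{R}/\bar{J})(\sigma) = R/J(\sigma)$. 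Being the $\omega_{\bar{R}}$-dual of a maximal CM $\bar{R}$-module, $R/J$ is itself maximal CM over $\bar{R}$, i.e.\ Cohen--Macaulay of dimension $d-g$. Applying $\Hom_{\bar{R}}(-,\omega_{\bar{R}})$ once more and invoking the double-duality $\Hom_{\bar{R}}(\Hom_{\bar{R}}(N,\omega_{\bar{R}}),\omega_{\bar{R}}) = N$ for maximal CM modules $N$, we conclude $\omega_{R/J}(-\sigma) \cong I\omega_R/(\alpha)\omega_R$, which rearranges to the stated formula. The main obstacle is the equality $(\alpha)\omega_R :_R I\omega_R = J$; every subsequent step is then forced by exactness and duality. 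If $R$ does not a priori admit a graded canonical module one may first $\fm_0$-adically complete $R_0$, as is done in the proof of Theorem \ref{t22}, to legitimize the use of $\omega_R$.
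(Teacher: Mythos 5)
Your argument is correct and follows the same route as the references the paper cites in lieu of a proof (Huneke--Ulrich, Ulrich, Kustin--Ulrich): pass to $\bar{R}=R/(\alpha)$, dualize the sequence $0\to I\omega_R/(\alpha)\omega_R\to \omega_R/(\alpha)\omega_R\to \omega_R/I\omega_R\to 0$ into $\omega_{\bar{R}}$, and use ${\rm End}_R(\omega_R)\cong R$ (equivalently, faithfulness of $\omega_{\bar{R}}$) to identify $(\alpha)\omega_R :_R I\omega_R$ with $J$. The graded twist $\sigma((\alpha))$ is correctly tracked through the top term of the Koszul complex $K_\bullet(\alpha;R)$, so no changes are needed.
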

\begin{proof}
Notice that
$({\omega_R}/{(\alpha)\omega_R})(\sigma((\alpha)))=\omega_{R/(\alpha)}$,
\cite[3.6.14]{BH}. So in order to prove the assertion we can pass
to the case where $(\alpha)=0$. By \cite[3.3.10]{BH},
$\Hom({\omega_R}/{I\omega_R},\omega_R)_\fm$ is CM of dimension
$d$. Moreover
$\Hom({\omega_R}/{I\omega_R},\omega_R)=\Hom(R/I,R)=J$,
\cite[13.3.4(ii)]{BS}. Therefore $J$ is CM of dimension $d$; so
that $\depth(R/J)\geq d-1$.

 Now, consider the homogeneous commutative diagram
 $$\begin{array}{cccccccclcc}
0&\longrightarrow &I\omega_R                 &\longrightarrow  &\omega_R                 &\longrightarrow &\omega_R/I\omega_R          &\longrightarrow&0                    &    &  \\
 &                &\downarrow           &                 &\parallel           &                &\downarrow \gamma &               &                     &    & \\
 0&\longrightarrow & \Hom(R/J,\omega_R)       & \longrightarrow &\omega_R                 &\longrightarrow &\Hom(J,\omega_R)        &\longrightarrow&\Ext_{R}^{1}(R/J,\omega_R)&\longrightarrow &0
\end{array}$$
with exact rows. It is straightforward, but messy, to see that
$\gamma$ is the natural homomorphism which is the composition of
the following natural homogeneous homomorphisms:
\begin{multline*}
\omega_R/I\omega_R \longrightarrow \Hom(\Hom(\omega_R/I\omega_R,\omega_R),\omega_R)\\
\simeq \Hom(\Hom(R/I,\Hom(\omega_R,\omega_R)),\omega_R)\\
\simeq \Hom(\Hom(R/I,R),\omega_R) \simeq \Hom(J,\omega_R).
\end{multline*}
Hence by \cite[3.3.10]{BH}, $\gamma_\fm$ is an isomorphism which
implies that $\gamma$ is an isomorphism, thus
$\Ext_{R}^{1}(R/J,\omega_R)=0$, which in conjunction with the
local duality theorem for the local ring $R_\fm$ implies that
$R/J$ is CM of dimension $d$. Also, considering  5-lemma one sees
that $I\omega_R \simeq \Hom(R/J,\omega_R) \simeq \omega_{R/J} $,
\cite[3.6.12]{BH}, which completes the proof.
\end{proof}


\begin{rem}\label{2.2}
Recall that if $I$ is SCM then $\omega_R/I\omega_R$ and all of the
Koszul homology modules of $\omega_R$ with respect to any
generating set of $I$ is CM.(see for example \cite{S}.)
\end{rem}


\begin{prop}\label{PA-N}
Assume $R$ is Gorenstein standard graded with $R_0$ Artinian local and the graded canonical module
$\omega_R=R(b)$, where $b$ is an integer. Suppose that $J = \fa
:I$ is a geometric $s$-residual intersection of $I$. Assume
moreover that $I$ is SCM and satisfies $G_{s}$. Then $R/J$ is CM of
dimension $d-s$, and
$\omega_{R/J}\cong(I+J/J)^{s-g+1}(b+\sigma(\fa))$
\end{prop}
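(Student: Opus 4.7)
I would prove this by induction on $s - g$, in the spirit of Huneke-Ulrich~\cite[Proposition~2.1]{HU}, adapting their local argument to the graded setting with the aid of the graded Artin-Nagata lemma (Lemma~\ref{Lulrich}) and the graded linkage formula (Lemma~\ref{lw}). First, Lemma~\ref{Lulrich} supplies an A-N homogeneous generating set $a_1,\dots,a_s$ of $\fa$, so that $\fa_i := (a_1,\dots,a_i)$ and $J^{(i)} := \fa_i :_R I$ form a chain of geometric $i$-residual intersections for $g \le i \le s$, with $\sigma(\fa) = \sigma(\fa_{s-1}) + \deg a_s$.

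For the base case $s = g$, the sequence $a_1,\dots,a_g$ is a maximal homogeneous regular sequence in $I$. Since $I$ is SCM, Remark~\ref{2.2} gives that $\omega_R/I\omega_R$ is Cohen-Macaulay, and Lemma~\ref{lw} yields $\omega_{R/J} \cong (I/\fa)(b + \sigma(\fa))$. The geometric hypothesis forces $I \cap J = \fa$, hence $I/\fa \cong (I+J)/J$, matching the claimed formula with exponent $s - g + 1 = 1$.

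For the inductive step, assume the statement for $s-1$, so that $R/J^{(s-1)}$ is Cohen-Macaulay of dimension $d - s + 1$ with $\omega_{R/J^{(s-1)}} \cong \bigl((I + J^{(s-1)})/J^{(s-1)}\bigr)^{s-g}(b + \sigma(\fa_{s-1}))$. The plan is to derive $\omega_{R/J}$ as $\Ext^1_{R/J^{(s-1)}}(R/J, \omega_{R/J^{(s-1)}})$ from the short exact sequence $0 \to J/J^{(s-1)} \to R/J^{(s-1)} \to R/J \to 0$, which simultaneously verifies Cohen-Macaulayness of $R/J$ of dimension $d - s$. Three graded identifications drive the computation: (i) the geometric condition for $J^{(s-1)}$ yields $I \cap J^{(s-1)} = \fa_{s-1}$, so $(I+J^{(s-1)})/J^{(s-1)} \cong I/\fa_{s-1}$; (ii) $a_s$ is a nonzerodivisor modulo $\fa_{s-1}$ (as $\Ht(\fa) \ge s$), giving $\fa/\fa_{s-1} \cong (R/\fa_{s-1})(-\deg a_s)$; (iii) the natural pairing $(x, i) \mapsto xi \bmod \fa_{s-1}$ delivers a graded isomorphism $J/J^{(s-1)} \cong \Hom_{R/J^{(s-1)}}\bigl((I+J^{(s-1)})/J^{(s-1)}, (R/J^{(s-1)})(-\deg a_s)\bigr)$. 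Substituting (i)--(iii) together with the inductive expression for $\omega_{R/J^{(s-1)}}$ into the $\Ext^1$ computation produces one additional factor of $(I+J)/J$ (after a final collapse using the geometric property for $J$ itself) and the extra degree shift by $\deg a_s$, giving the claimed form of $\omega_{R/J}$.

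The principal technical obstacle is the rigorous verification of (iii) --- a graded pairing identification whose compatibility with the canonical module structure of $R/J^{(s-1)}$ must be checked --- together with the subsequent collapse of $(I+J^{(s-1)})/J^{(s-1)}$ down to $(I+J)/J$ inside the $\Ext^1$, which requires invoking the geometric hypothesis for $J$ at exactly the right step so that no spurious extra factors appear. Careful bookkeeping of the graded shifts via $\sigma(\fa_{s-1}) + \deg(a_s) = \sigma(\fa)$ then closes the induction.
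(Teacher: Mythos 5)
Your proposal reconstructs precisely the argument the paper intends: the paper's own proof is the single line "(see \cite[2.3]{HU}) The proof goes along the same line as in the local case," and the graded adaptation of Huneke--Ulrich's induction you lay out --- using Lemma~\ref{Lulrich} to produce an A-N homogeneous generating set, Lemma~\ref{lw} together with Remark~\ref{2.2} for the base case $s=g$, and the $\Ext^1$ bootstrap with the geometric hypothesis collapsing $I\cap J^{(i)}=\fa_i$ in the inductive step --- is exactly that adaptation. The only minor slip is the citation: the relevant theorem in Huneke--Ulrich is their Theorem~2.3 (the paper's Lemma~\ref{lw} corresponds to their 2.1, which is just the linkage base case).
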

\begin{proof}
The fact that $R/J$ is CM is followed from Theorem \ref{t19}. We continue
to  the proof by using induction on $s-g$. In the case where
$s=g$, $\fa$ can be generated by a homogeneous regular sequence
\cite[1.5.16 and 1.6.19 ]{BH}; and hence the result follows
immediately  from Lemma \ref{lw}.

 Let $s-g>0$. By Lemma \ref{Lulrich} there
exists an A-N homogeneous generating set for $\fa$, say
$\{a_1,\cdots,a_s\}$. (Notice that, under the hypothesis of the
proposition, $\Ht J = s$ by Theorem \ref{t19}, hence $\mu(\fa)=s$. Thus
the length of the A-N homogeneous generating set does not exceed
$s$). Now $J_{s-1}=(a_1,\cdots,a_{s-1}):I$ is a geometric
$(s-1)$-residual intersection of $I$. let $'$ denote the natural
homomorphism from $R$ to $R/J_{s-1}$. By induction hypothesis,
$\omega_{R'}\cong(I')^{s-g}(b+\sigma((a_1,\cdots,a_{s-1})))=(I')^{s-g}(b+\sum_{i=1}^{s-1}\deg
a_i)$. Also by \cite[3.1]{H}, $R'$ is CM, and $I'$ is a height one
SCM ideal in $R'$. Furthermore,  $a'_s$ is a regular element in
$I'$ and $J'=a'_s:I'$, \cite[1.7(d),(f)]{U}. Hence by Lemma \ref{lw},
$\omega_{R/J}\cong I'\omega_{R'}/(a'_s)\omega_{R'}(\deg a_s)\cong
(I')^{s-g+1}/a'_{s}(I')^{s-g}(b+\sum_{i=1}^{s}\deg a_i)$.

Now, applying the same argument as in the last lines of the proof
of \cite[2.3]{HU} for the local ring $R_{\fm}$, it is easy to see
that the natural homogeneous epimorophism from
$(I')^{s-g+1}/a'_{s}(I')^{s-g}$ to $(I+J/J)^{s-g+1}$ is an
isomorphism. Therefore $\omega_{R/J}\cong
(I+J/J)^{s-g+1}(b+\sum_{i=1}^{s}\deg
a_i)=(I+J/J)^{s-g+1}(b+\sigma(\fa))$.
\end{proof}


Now, we are ready to present the second main result of this
section. Here, we give a sharp formula for the
Castelnuovo--Mumford regularity of geometric residual
intersection of SCM ideals which satisfies $G_s$.

\begin{prop}\label{p210}
Assume that $R$ is a Gorenstein standard graded, with $R_0$ Artinian local, and that $I$ is a  SCM
ideal which satisfies $G_s$. If $J$ is a geometric $s$-residual
intersection of $I$, then
\begin{center}
$\Reg (R/J) \leq \Reg( R)+ \sigma( \fa) -(s-g+1)\beg(I/\fa)-s$,
\end{center}
and the equality holds if and only if $((I/\fa)_i)^{s-g+1} \neq 0$, where $i= \beg(I/\fa)$.

\end{prop}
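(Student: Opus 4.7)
The plan is to leverage Proposition~\ref{PA-N}: under these hypotheses, $R/J$ is Cohen-Macaulay of dimension $d-s$, and writing $\omega_R=R(b)$ one has
$$\omega_{R/J}\cong ((I+J)/J)^{s-g+1}(b+\sigma(\fa)).$$
Since $R_0$ is Artinian, $\Dim(R_0)=0$, so Proposition~\ref{pregm} gives $\Reg=\Reg_\fm$ for any finitely generated graded $R$-module. Cohen-Macaulayness of $R/J$ then collapses the regularity to a single local cohomology module:
$$\Reg(R/J)=\End(H^{d-s}_\fm(R/J))+(d-s),\qquad \Reg(R)=\End(H^d_\fm(R))+d=b+d.$$

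Next I would invoke graded local duality. Because $R$ is Gorenstein with $\omega_R=R(b)$, for the Cohen-Macaulay module $R/J$ one has $\End(H^{d-s}_\fm(R/J))=-\beg(\omega_{R/J})$, and substituting the formula for $\omega_{R/J}$ gives
$$\End(H^{d-s}_\fm(R/J))=-\beg\bigl(((I+J)/J)^{s-g+1}\bigr)+b+\sigma(\fa).$$
The geometric hypothesis $\Ht(I+J)\geq s+1$ forces $I\cap J=\fa$, hence $(I+J)/J\cong I/\fa$ as graded $R$-modules and $\beg((I+J)/J)=\beg(I/\fa)=:i_0$. A direct degree count in the power of a graded ideal then yields
$$\beg\bigl(((I+J)/J)^{s-g+1}\bigr)\geq (s-g+1)\,i_0,$$
and assembling the displays above produces exactly the claimed upper bound.

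For the equality criterion, equality in the bound is equivalent to equality in the above degree count, i.e.\ to the existence of $\bar y_1,\ldots,\bar y_{s-g+1}\in ((I+J)/J)_{i_0}$ with $\bar y_1\cdots \bar y_{s-g+1}\neq 0$ in $R/J$. Lifting to representatives $y_j\in I_{i_0}$, the delicate step is to replace the condition ``$y_1\cdots y_{s-g+1}\notin J$'' by the condition ``$y_1\cdots y_{s-g+1}\notin \fa$'' (which is precisely $((I/\fa)_{i_0})^{s-g+1}\neq 0$). This is where the geometric hypothesis is essential: each product $y_1\cdots y_{s-g+1}$ belongs to $I^{s-g+1}\subseteq I$, so if it lies in $J$ it must lie in $I\cap J=\fa$; the two power-nonvanishing conditions therefore coincide, and the stated ``if and only if'' follows. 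The main obstacle in this plan is purely bookkeeping: correctly tracking the shifts $b+\sigma(\fa)$ arising from $\omega_{R/J}$ together with the shift $b$ arising from graded local duality, and using the geometric hypothesis at the right moment to collapse the degree-$i_0$ power condition modulo $\fa$ versus modulo $J$.
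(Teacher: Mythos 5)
Your proposal is correct and follows essentially the same route as the paper: Proposition~\ref{PA-N} for the Cohen-Macaulayness and the formula for $\omega_{R/J}$, graded local duality to convert $\End(H^{d-s}_\fm(R/J))$ into $-\beg(\omega_{R/J})$, the identification $(I+J)/J\cong I/\fa$ in the geometric case, and the degree count on the $(s-g+1)$-st power. The only cosmetic differences are that the paper passes from $\Reg$ to $\Reg_\fm$ by noting directly that $H^i_\fm=H^i_{R_+}$ when $R_0$ is Artinian (rather than invoking Proposition~\ref{pregm}), and that the equality $I\cap J=\fa$ is really a consequence of the full package of hypotheses (geometric residual intersection together with $G_s$ and the depth condition, via Huneke--Ulrich), not of the geometric condition alone as your phrasing suggests; your more explicit observation that $I^{s-g+1}\cap J=I^{s-g+1}\cap\fa$ is a nice way to make the paper's one-line claim $\beg(((I+J)/J)^{s-g+1})=\beg((I/\fa)^{s-g+1})$ fully transparent.
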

\begin{proof}We first recall that, since $(R_0,\fm_0)$ is an Artinian local
ring, for any $R$-module $L$ and any integer $i$,
$H^i_{\fm}(L)=H^i_{R_+}(L)$. Hence, by the definition of
Castelnuovo--Mumford regularity and Theorem \ref{t19},

\begin{center} $\Reg (R/J) = \MAX \{\END
 (H^i_{R_+}(R/J))+i: 0\leq i \in \mathbb{Z}\}=\END
 (H^{d - s}_{R_+}(R/J))+ d-s.
$
\end{center}

Using the graded local duality theorem
 \cite[13.4.2 and 13.4.5(iv)]{BS}, one has $\END(H^{d -
 s}_{R_+}(R/J))=
 \END(\Hom_{R_0}(\omega_{R/J},E_{R_0}(R_0/\fm_0)))=-\beg(\omega_{R/J})$.
  Therefore, we have to compute $\beg(\omega_{R/J})$. By Proposition \ref{PA-N},
  $\beg(\omega_{(R/J)})=\beg((I+J/J)^{s-g+1}(b+\sigma(\fa)))$. Recalling that under the conditions of the theorem  $I \cap J= \fa$, we have $\beg((I+J/J)^{s-g+1}(b+\sigma(\fa)))=
 \beg((I/\fa)^{s-g+1})-(b+\sigma(\fa)) \geq
 (s-g+1)\beg(I/\fa)-b-
 \sigma(\fa)$. Thus we get $\Reg (R/J) \leq  \sigma( \fa) -(s-g+1)\beg(I/\fa) + b  -s = \Reg( R) + \sigma( \fa) -(s-g+1)\beg(I/\fa)
 -s$. Also the equality holds if and only if $\beg((I/\fa)^{s-g+1})= (s-g+1)\beg(I/\fa)$
\end{proof}

\section{Perfect Ideals Of Height 2}

In this section, using the tool of a generalized koszul complex,
Eagon-Northcott complex, we demonstrate a free resolution for
residual intersections of perfect ideals of height 2. So that, we
show that in this case, we prove the equality of the upper bound which is found in the
previous section.

Throughout, $R$ is a standard graded Cohen--Macaulay ring over a
field $R_0= \mathbb{K}$, $I$ is a homogeneous ideal of $R$ minimally
generated by $f_1,\ldots, f_r \in R $ with $\deg f_j = i_j$ for
$1\leq j \leq r $ and also $i_1\geq \cdots \geq i_{r-u}>
i_{r-u+1}= \cdots = i_n$ for some $1\leq u \leq r $. Let $\fa =
(l_1,\ldots,l_s)$ be a homogeneous $s$-generated ideal of $R$
properly contained in $I$ with $\deg l_i = a_i$ for $1\leq i \leq
s $ and $a_1\geq \cdots \geq a_{k}> a_{k+1}= \cdots =a_s = i_r$.
Let $J= \fa : I$ be an $s$-residual intersection of $I$.

We start with an elementary combinatorial computation, which is
auxiliary in the proof of the main theorem of this section.
 \begin{lem}\label{L:1}
 Let $m$ be a positive integer, then
  \[ \beta_m(t):=(-1)^m\sum_{j=0}^{m}(-1)^jj^t\binom{m}{j} = \left\lbrace
           \begin{array}{c l}
              0 & \text{if $t \leq m-1$},\\
              >0 & \text{otherwise}.
           \end{array}
         \right. \]
 \end{lem}

 \begin{proof}

   Consider the polynomial  $\alpha_m:=(x-1)^m$ and define a sequence of polynomials as
  follow. $\sideset{}{_{m}^{0}}{\A}(x)=\alpha_m(x)$,
  $\sideset{}{_{m}^{1}}{\A}(x)=\sideset{}{_{m}^{'}}{\A}(x)$ and
  $\sideset{}{_{m}^{i+1}}{\A}(x)=(x\sideset{}{_{m}^{i}}{\A}(x))'$
  for $ i\geq 1$ (Here, $'$
  stands for the ordinary derivation). Considering the binomial expansion of $\alpha_m(x)$, it is easy
 to see that $\sideset{}{_{m}^{t}}{\A}(1)=\beta_m(t)$.
Thus we have to show that
  $\sideset{}{_{m}^{0}}{\A}(1)=\cdots=\sideset{}{_{m}^{m-1}}{\A}(1)=0$
  and $\sideset{}{_{m}^{i}}{\A}(1)\geq1$ for $i \geq m$.

 We use induction on $m$. In the case where $m=1$, we have
$\sideset{}{_{m}^{0}}{\A}(x)=(x-1),
\sideset{}{_{m}^{1}}{\A}(x)=1$ for all $i \geq 1$, as desired.

Now, note that $\alpha_m(x)=(x-1)\alpha_{m-1}(x)$ therefore by
a straightforward argument one can show  for $i\geq 1$ that,
$$ \sideset{}{_{m}^{i}}{\A}(x)=\binom{i}{0}\sideset{}{_{m-1}^{0}}{\A}(x)+x\binom{i}{1}\sideset{}{_{m-1}^{1}}{\A}(x)+\cdots+x\binom{i}{i-1}\sideset{}{_{m-1}^{i-1}}{\A}(x)+(x-1)\binom{i}{i}\sideset{}{_{m-1}^{i}}{\A}(x).$$
Now, by induction, the claim follows immediately from the
following equation for $i \geq 1$,
$$\sideset{}{_{m}^{i}}{\A}(1)=\binom{i}{0}\sideset{}{_{m-1}^{0}}{\A}(1)+\binom{i}{1}\sideset{}{_{m-1}^{1}}{\A}(1)+\cdots+\binom{i}{i-1}\sideset{}{_{m-1}^{i-1}}{\A}(1).$$

 \end{proof}

With the  notations mentioned before the above lemma, we have
the main theorem in this section.
\begin{thm}\label{T31}
 If $I$ is a perfect ideal of height
2, then
\begin{enumerate}
 \item[(i)]{$J$ is perfect of height $s$},
 \item[(ii)]{$s-k\leq u$},
 \item[(iii)]{$\Reg (R/J) = \Reg( R) + \sigma(\fa)
     -(s-1)\beg (I) -s$, whenever $s-k\leq u-1$}.
\end{enumerate}
\end{thm}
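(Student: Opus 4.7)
The plan is to realise $J$ as a determinantal ideal and then extract the regularity from the Eagon--Northcott complex. Let $0 \to F_1 \xrightarrow{\phi} F_0 \to I \to 0$ be the Hilbert--Burch resolution of $I$, with $F_0 = \bigoplus_{j=1}^r R(-i_j)$, and let $C : \bigoplus_{i=1}^s R(-a_i) \to F_0$ be the degree-zero map whose columns encode the expressions $l_i = \sum_j c_{ji} f_j$. Setting $\Psi = [\phi \mid C]$, an $r \times (r-1+s)$ matrix with source $G := F_1 \oplus \bigoplus_{i=1}^s R(-a_i)$, part (i) reduces to the identification $J = I_r(\Psi)$, classical for residual intersections of perfect height-two ideals and also obtainable by specialisation from the generic case via Proposition~\ref{pgeneric}. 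Combined with $\Ht J \ge s$ and Macaulay's bound $\Ht I_r(\Psi) \le s$, this forces the Eagon--Northcott complex $\mathcal{E}_\bullet$ of $\Psi$ to be an acyclic $R$-free resolution of $R/J$ of length $s$, so $J$ is perfect of height $s$.

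For (ii), each $l_i$ with $i > k$ has degree $a_i = i_r = \beg(I)$, so the relation $l_i = \sum_j c_{ji} f_j$ requires $\deg c_{ji} = i_r - i_j \ge 0$; combined with $i_j \ge i_r$ this forces $i_j = i_r$ and $c_{ji} \in \mathbb{K}$. Hence $l_{k+1}, \dots, l_s$ all lie in the $\mathbb{K}$-span of $\{f_{r-u+1}, \dots, f_r\}$ inside $R_{i_r}$. Both families are $\mathbb{K}$-linearly independent in degree $i_r$, being parts of minimal generating sets and because $\fm\fa$ and $\fm I$ vanish there, so a dimension count gives $s - k \le u$.

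For (iii), combine $\mathcal{E}_\bullet$ with graded local duality. Since $R/J$ is CM of codimension $s$ in the CM ring $R$, we have $\omega_{R/J} = \Ext^s_R(R/J, \omega_R) = \coker\bigl(\Hom_R(\mathcal{E}_{s-1}, \omega_R) \to \Hom_R(\mathcal{E}_s, \omega_R)\bigr)$. The top term $\mathcal{E}_s = \wedge^{r+s-1} G \otimes \mathrm{Sym}_{s-1}(F_0^*) \otimes (\wedge^r F_0)^*$ has summands $R(-\sigma(\fa) + \sum_j \alpha_j i_j)$ indexed by multi-indices $\alpha$ with $|\alpha| = s - 1$; the minimum $\sum_j \alpha_j i_j = (s-1)\beg(I)$ is attained exactly when $\alpha$ is supported on $\{r-u+1, \dots, r\}$, giving the maximum shift $\sigma(\fa) - (s-1)\beg(I)$. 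Using $\Reg(R/J) = (d-s) - \beg(\omega_{R/J})$ and $\Reg(R) = d - \beg(\omega_R)$, this yields the upper bound $\Reg(R/J) \le \Reg(R) + \sigma(\fa) - (s-1)\beg(I) - s$. For the reverse inequality under the hypothesis $s - k \le u - 1$: inspecting shifts in $\mathcal{E}_{s-1}$ shows that only summands obtained by omitting a $C$-column $l_i$ with $i > k$ (so $a_i = i_r$) together with $\beta$ supported on $\{r-u+1, \dots, r\}$ can reach the same top shift, and the strict inequality $s - k < u$ guarantees that the dual Eagon--Northcott differential cannot cover all top-shift generators of $\Hom_R(\mathcal{E}_s, \omega_R)$; a surviving class then gives $\beg(\omega_{R/J}) = \beg(\omega_R) - \sigma(\fa) + (s-1)\beg(I)$, promoting the inequality to an equality.

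The main obstacle is the non-cancellation argument in the equality direction of (iii). The upper bound is essentially mechanical shift arithmetic, but showing that the top-shift summands of $\Hom_R(\mathcal{E}_s, \omega_R)$ indexed by $\alpha$ supported on $\{r-u+1, \dots, r\}$ are not all hit by the dual Eagon--Northcott differential requires writing that differential out explicitly and using the combinatorial input $s - k < u$ to produce an ``unused'' minimal generator of $I$ modulo $\fa$ in degree $\beg(I)$.
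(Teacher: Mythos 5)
Your framework agrees with the paper's: realize $J$ as the determinantal ideal $I_r(\Psi)$ of the Hilbert--Burch map augmented by the columns encoding $\fa \subseteq I$ (the paper cites \cite[Theorem 1.1]{CEU} for this identification), resolve $R/J$ by the Eagon--Northcott complex, and dualize. Parts (i) and (ii), and the upper bound in (iii), are essentially the paper's argument.

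The genuine gap is the non-vanishing step in (iii), which you flag yourself. Showing that some top-shift summand of $\Hom_R(\mathcal{E}_s,\omega_R)$ ``survives'' is \emph{not} a local question about the last dual differential alone: the rank of that map in the minimal degree is pinned down by the exactness of the whole resolution, not read off its matrix entries. The paper exploits exactly this. Tensoring the Eagon--Northcott resolution with the \v{C}ech complex and using that $R/J$ is CM of codimension $s$ yields an exact sequence
\begin{equation*}
0 \to H^{d-s}_\fm(R/J) \to H^d_\fm(N_{s-1}[\sigma]) \to \cdots \to H^d_\fm(N_0[\sigma]) \to H^d_\fm(R) \to 0,
\end{equation*}
and restricting to the maximal degree $e+a$ (with $e=\End H^d_\fm(R)$ and $a=f(s-1)$ the top shift) kills the right-hand end and all terms $j<k-1$, leaving an exact sequence of $\mathbb K$-vector spaces with dimensions $t\,n(j)$ for $k-1\le j\le s-1$ plus the unknown $\dim H^{d-s}_\fm(R/J)_{e+a}$. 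Taking the Euler characteristic gives $\dim H^{d-s}_\fm(R/J)_{e+a} = \pm t \sum_{j=k-1}^{s-1} (-1)^j n(j)$, where $n(j)=\binom{s-k}{j-k+1}\binom{u+j-1}{u-1}$. The hypothesis $s-k\le u-1$ then enters through a finite-difference identity: $\binom{u+j-1}{u-1}$ is a degree-$(u-1)$ polynomial in $j$, and $\sum_j(-1)^j\binom{s-k}{j-k+1}p(j)$ vanishes for $\deg p < s-k$ and is nonzero for $\deg p = s-k$; since $u-1\ge s-k$ the leading term survives with a definite sign, so the alternating sum is nonzero. Your ``unused generator'' heuristic cannot substitute for this: as you can check from the formula, $n(s-2)$ need not be smaller than $n(s-1)$, so the map can in principle be surjective at the top degree, and only the alternating-sum computation across the full strand decides the issue.
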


\begin{proof}
Consider a minimal free resolution for $I$ and $\fa$,
$$\begin{array}{cccccccclcc}
0&\longrightarrow &\bigoplus_{t=1}^{r-1}R(-b_t)&\longrightarrow &\bigoplus_{t=1}^{r}R(-i_t)&\longrightarrow &I        &\longrightarrow&0&    &  \\
 &                &                            &                &\uparrow                  &                &\uparrow &               & &    & \\
 &                &               \cdots       & \longrightarrow&\bigoplus_{t=1}^{s}R(-a_t)&\longrightarrow &\fa      &\longrightarrow&0&    &.
\end{array}$$
The mapping cone of the above diagram provides a free
presentation for $I/\fa$
\[
  \begin{CD}
  \cdots@>>>\bigoplus_{t=1}^{r-1}R(-b_t)\bigoplus_{t=1}^{s}R(-a_t)@>\psi>>\bigoplus_{t=1}^{r}R(-i_t)@>>>I/\fa@>>>0.
  \end{CD}
 \]
By Fitting Theorem \cite[20.7]{Eisenbud} $ \fa : I
=ann(I/\fa)\subseteq \surd(I_r(\psi))$, thus $\grade
I_r(\psi)\geq \grade(\fa : I)=s=(r-1+s)-r+1;$ so that
by\cite[Exercise 20.6]{Eisenbud}
$ I_r(\psi)=\fa :I$ and moreover, the Eagon-Northcott complex of
$\psi$ provides a free resolution for $R/I_r(\psi)=R/J$  say
$$\mathcal{N}_\bullet: 0\rightarrow N_{s-1}[\sigma] \rightarrow \ldots \rightarrow N_{1}[\sigma]\rightarrow  N_{0}[\sigma]\rightarrow R\rightarrow
R/J\rightarrow0.$$with
$N_j=(Sym_j(\bigoplus_{t=1}^{r}R(-i_t))^*\bigotimes\bigwedge^{r+j}(\bigoplus_{t=1}^{r+s-1}R(-c_t))$
 where $c_1,\ldots,c_{r+s-1}$ are
integers such that
$\{c_1,\ldots,c_{r+s-1}\}=\{b_1,\ldots,b_{r-1},a_1,\ldots,a_s\}$
with $c_1\geq\ldots\geq c_{r+s-1}$ and $\sigma=\sigma(\fa)$.
The module $N_j,0\leq j\leq s-1,$ is a graded free module
generated by elements of degrees $-(i_{t_1}+\ldots
+i_{t_j})+(c_{k_1}+\ldots + c_{k_{r+j}})$ with
$t_1\leq\ldots\leq t_j$ and $k_1<\ldots<k_{r+j}$. Observing the
double complex $\mathcal{C}^\bullet_\fm \bigotimes _R
\mathcal{N}_\bullet$, we get two spectral sequences

\[ \sideset{^{\infty}}{_{\text\footnotesize{hor}}^{-i,-j}}{\E} = \sideset{^{2}}{_{\text\footnotesize{hor}}^{-i,-j}}{\E}= \left\lbrace
           \begin{array}{c l}
              H^{d-j}_{\fm}(R/J)    & \text{if $i=0$,}\\
              0 & \text{otherwise}.
           \end{array}
         \right. \]

\[ \sideset{^{1}}{_{\text\footnotesize{ver}}^{-i,-j}}{\E} = \left\lbrace
           \begin{array}{c l}
                            H^d_{\fm}(R)        & \text{if  $i =0$ and $j=d$,}\\
                            H^d_{\fm}(N_{i-1}[\sigma])  & \text{if  $i\geq 1$ and $j=d$,}\\
              0 & \text{otherwise}.
           \end{array}
         \right. \]

 Since $s\leq d$, the correspondence of these two
spectral sequences yields $H^{i}_{\fm} (R/J)=0$ for
$i=0,\ldots,d-s-1.$ As $\Ht J \geq s$, we also have
$H^{i}_{\fm} (R/J)=0$ for $i>d-s$, hence , as $J$ is
non-trivial, we have $\dim(R/J)=\depth(R/J)=d-s$. Therefore $J$
is CM of height $s$. This completes (i).

 To see  (ii), note that $\Ht J = s$ in (i), means that $\fa$ can not be generated
by a less number of generators than $s$. Now, note that as $\mathbb{K}$-vector spaces $\fa_{i_r}$ is a
subspace of $I_{i_r}$, the former is of dimension $s-k$ while
the later is of dimension $u$; so that $s-k \leq u.$
This shows (ii).

 To prove (iii), we first introduce two numerical functions $f$ and $n$.\\
 $f(j):=$ The maximum degree of generators of  $N_j[\sigma]$=$\sum_{t=1}^{r+j}c_t-ji_r-\sigma$, and\\
$n(j):=$ The number of generators of $N_j[\sigma]$ of the
maximum degree $f(j)$.

By Hilbert-Burch theorem, \cite[3.13]{Eisenbud2}, we have
$b_1,\ldots,b_{r-1} > i_r \geq r-1\geq 1$ and
$\sigma=\sum_{t=1}^{r-1}b_t$. On the other hand,
$f(j+1)-f(j)=c_{r+j+1}-i_r$ for $0 \leq j  \leq s-2$ .
Therefore we get the following ordering of $f(j)$'s for  $0
\leq j  \leq s-1$,
   $$ 0< f(0)< \ldots< f(k-1)=f(k)=\ldots=f(s-1).$$

Now, return to the spectral sequence that we mentioned  above.
The spectral sequence
$\sideset{^{1}}{_{\text\footnotesize{ver}}^{-i,-j}}{\E}$ yields
the following exact
 sequence,
 \begin{gather}
  0\rightarrow H^{d-s}_{\fm}(R/J) \rightarrow H^{d}_{\fm}(N_{s-1}[\sigma])\rightarrow \ldots \rightarrow H^{d}_{\fm}(N_0[\sigma]) \rightarrow H^{d}_{\fm}(R)\rightarrow 0. \label{**}
 \end{gather}
 Hence $ \End (H^{d-s}_{\fm}(R/J))\leq
\End(H^{d}_{\fm}(N_{s-1}[\sigma]))$. Equivalently,
\begin{gather*}
\Reg (R/J)-(d-s) \\
\leq \Reg (N_{s-1}[\sigma])-d = \Reg (R) + f(s-1) -d \\
= \Reg (R) + \sum_{i=1}^{s} d_i -(s-1)i_r -d.
\end{gather*}

 To prove the equality we show that $\End (H^{d-s}_{\fm}(R/J))=a+e$ where
$a=f(s-1)$ and $e=\End (H^{d}_{\fm}(R))$. For $0 \leq j  \leq
s-1$, $H^{d}_{\fm}(N_{j}[\sigma])= \cdots\bigoplus
(H^{d}_{\fm}(R)(-a))^{n(j)}$. Hence if  $
H^{d}_{\fm}(R)_e=\mathbb{K} ^t$
 for some $t>0$. Then the $(e+a)$-th strand of \ref{**} is as below
 $$0\rightarrow H^{d-s}_{\fm}(R/J)_{e+a} \rightarrow \mathbb{K} ^{tn(s-1)}\rightarrow \ldots \rightarrow \mathbb{K} ^{tn(k-1)} \rightarrow  0. $$
 Therefore $H^{d-s}_{\fm}(R/J)_{e+a}\neq 0$ if and only if $\sum_{j=k-1}^{s-1}(-1)^jn(j)\neq
 0$. To this end we compute $n(j)$ for $k-1\leq j \leq s-1.$ (
 In the case where $k=0$, $0\leq j \leq s-1$) $n(j)$ consists of two
 parts

 a) Number of choices of $j$ elements from the set
 $ \{ i_{r-u+1},\ldots,i_r \}$ with possible repeated elements;
 that is $~\binom{u+j-1}{u-1}.$

 b)Number of choices of $(j-k+1)$ elements from the set $\{
 a_{k+1},\ldots,a_s\}$without repeated elements; that is  $\binom{s-k}{j-k+1}$.

So, we have  $n(j)=\binom{s-k}{j-k+1}\binom{u+j-1}{u-1}$ for
$k-1\leq j \leq s-1$.  To show that
$\sum_{j=k-1}^{s-1}(-1)^jn(j)\neq 0$, we need to show that
$\sum_{j=k-1}^{s-1}(-1)^j\binom{s-k}{j-k+1}(j+1)\cdots (j+u-1)
\neq 0$. As $(j+1)\cdots (j+u-1)$ is a polynomial of degree
$u-1$ of $j$ with positive coefficients, it is sufficient to
show that $\sum_{j=k-1}^{s-1}\binom{s-k}{j-k+1}j^t$ has the
same sign for each  $0\leq t \leq u-1$ and at least one of them
is non-zero.

 Changing the variable $j$ by $j'=j-k+1$, we
 have to compute
 \begin{center}
 $\sum_{j'=0}^{s-k}(-1)^{j'+k-1}\binom{s-k}{j'}(j'+k-1)^t$.
 \end{center}
  So, it is enough to determine the sign of the summation
 \begin{center}
 $\sum_{j'=0}^{s-k}(-1)^{j'+k-1}\binom{s-k}{j'}j'^t = (-1)^{k-1-(s-k)}\beta_{s-k}(t)=(-1)^{s+1}\beta_{s-k}(t)$, for $0\leq t \leq
 u-1$.
 \end{center}
 As $s-k\leq u-1$,  \ref{L:1} implies that $\beta_{s-k}(t)=0$ for $t \leq
 s-k-1$ and $\beta_{s-k}(t)$ has the same sign as $(-1)^{s+1}$ for
 $s-k \leq t \leq u-1$ which completes the proof.

 \end{proof}

\begin{rem}\label{R:1}
In the case where $s-k=u$, Lemma \ref{L:1} implies that
$\beta_u(t)=0$ for all $t\leq u-1$, that means
$\sum_{j=k-1}^{s-1}(-1)^jn(j)=0$
 thus $\Reg (R/J)<\Reg (R) + \sum_{i=1}^{s}d_i -(s-1)i_r -
 s.$ Indeed in this  case if we have $i_1\geq \cdots
\geq i_{r-v}>
 i_{r-v+1}=\cdots = i_{r-u}> i_{r-u+1}= \cdots = i_n$ and $a_1\geq \cdots
 \geq a_{k-t}> a_{k-t+1}=\cdots=
a_{k}=i_{r-u}> a_{k+1}= \cdots =a_s = i_r$, then by the same
argument as in the proof of Theorem \ref{T31}, one can see that
$t \leq u-v$ and that $\Reg (R/J) = \Reg( R) + \sigma(\fa)
     -(s-1)i_{r-u} -s$, whenever $t < u-v$.

\end{rem}

Continuing in this way by a similar argument as in Theorem
\ref{T31}, one can deduce the next proposition.

\begin{prop}\label{pperfect2} If $I$ is a perfect ideal of
height 2 and $J\neq R$, then $$\Reg (R/J) = \Reg( R) +
\sigma(\fa)  -(s-1)\beg (I/\fa) -s.$$
\end{prop}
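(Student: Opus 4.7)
The plan is to obtain Proposition~\ref{pperfect2} by iterating the argument of Theorem~\ref{T31} up through the distinct degrees of the minimal generators of $I$, exactly along the lines sketched at the end of Remark~\ref{R:1}. First I would set up the stratified notation: let $e_1 > e_2 > \cdots > e_N = i_r$ be the distinct degrees appearing among $f_1,\dots,f_r$, and for each level $j$ let $u_j$ be the number of minimal generators of $I$ of degree $e_j$ and $k_j$ the number of minimal generators of $\fa$ of degree $e_j$. Since $\fa\subseteq I$, we have $\dim_{\mathbb{K}}\fa_{e_j}\leq \dim_{\mathbb{K}} I_{e_j}$ for every $j$, and the integer $\beg(I/\fa)$ equals the largest $e_{j_0}$ at which this inequality is strict. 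The hypothesis $J\neq R$ guarantees $\fa\neq I$, so such a $j_0$ exists.

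Next I would carry out an induction on $N - j_0$, the number of degree levels one has to climb before meeting a generator of $I$ not absorbed by $\fa$. The base case $j_0 = N$ is precisely the situation $s-k \leq u-1$ of Theorem~\ref{T31}(iii), which already gives the claimed formula with $\beg(I/\fa) = i_r$. For the inductive step, Theorem~\ref{T31}(ii) forces $s-k = u$ whenever the strict inequality fails, so all generators of $I$ of degree $i_r$ are captured by $\fa$; this is exactly the situation that Remark~\ref{R:1} analyzes by promoting the minimal degree from $i_r$ to $i_{r-u}$. Iterating this promotion reduces to the case in which the relevant parameters at level $e_{j_0}$ satisfy the strict inequality analogous to $t<u-v$, and the same Eagon--Northcott computation as in Theorem~\ref{T31}(iii) now gives
\[
\End\bigl(H^{d-s}_{\fm}(R/J)\bigr) \;=\; f(s-1)+\End\bigl(H^{d}_{\fm}(R)\bigr),
\]
where $f(s-1)=\sum_t c_t - (s-1)\beg(I/\fa) - \sigma(\fa)$ with the $c_t$'s reordered so that $\beg(I/\fa)$ plays the role previously played by $i_r$.

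The main computational obstacle will be showing, at the terminal level $e_{j_0}$, that the alternating sum of binomial coefficients controlling the top-degree strand of the exact sequence~(\ref{**}) is nonzero. This reduces to verifying that the quantity
\[
\sum_{j}(-1)^{j}\binom{s'-k'}{j-k'+1}\binom{u'+j-1}{u'-1}
\]
does not vanish, where the primed parameters $s'$, $k'$, $u'$ record the effective numbers of generators of $\fa$ and of $I$ at the levels $\geq e_{j_0}$. Under the strict inequality $s'-k'\leq u'-1$ holding at the terminal level, this is the same combinatorial non-vanishing that was verified at the end of the proof of Theorem~\ref{T31}, so nothing new is required here beyond careful bookkeeping.

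The hard part will be precisely this bookkeeping: tracking how $\sigma(\fa)$, the value $f(s-1)$, and the binomial multiplicities $n(j)$ transform as one passes from one degree level to the next, and verifying that after all the promotions the formula for $\End(H^{d-s}_{\fm}(R/J))$ collapses cleanly to $\Reg(R) + \sigma(\fa) - (s-1)\beg(I/\fa) - d$. Once this is in hand, graded local duality together with $\Reg(R/J) = \End(H^{d-s}_{\fm}(R/J)) + d - s$ (valid since $R_0$ is a field and $R/J$ is CM of dimension $d-s$ by Theorem~\ref{T31}(i)) delivers the stated equality.
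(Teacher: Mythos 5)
Your overall strategy matches the paper's: the paper offers no explicit proof of Proposition~\ref{pperfect2} beyond the phrase ``Continuing in this way, by a similar argument as in Theorem~\ref{T31}, one can deduce the next proposition,'' and your plan of iterating the Eagon--Northcott analysis of Theorem~\ref{T31} along the stratification by distinct generator degrees, using Remark~\ref{R:1} as the template for each promotion step, is precisely that intended argument. The final step via graded local duality and $\Reg(R/J)=\End(H^{d-s}_{\fm}(R/J))+d-s$ also coincides with the paper's reduction.

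Two cautions are in order, both of which concern the ``bookkeeping'' you defer. First, a small slip: $\beg(I/\fa)$ is the \emph{smallest} degree at which $\dim_{\mathbb{K}}\fa_{i}<\dim_{\mathbb{K}}I_{i}$, not the largest $e_{j_0}$; with your decreasing labelling $e_1>\cdots>e_N$ it is the degree $e_{j_0}$ with the \emph{largest} index $j_0$ having this property (and by Theorem~\ref{T31}(ii) the equality $\dim\fa_{e_j}=\dim I_{e_j}$ propagates upward from $e_N$ until it fails). Relatedly, the shift $\sigma$ appearing in $f(j)$ is $\sum_{t} b_t$ (equal to $\sum_j i_j$ by Hilbert--Burch), not $\sigma(\fa)$; the paper's line ``$\sigma=\sigma(\fa)$'' is a misprint, and if you keep $\sigma(\fa)$ in the formula for $f(s-1)$ the constants will not reconcile. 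Second, and more substantively: you frame the hard step as a single alternating-sum non-vanishing at the ``top-degree strand'' after promotion, but establishing $\End(H^{d-s}_{\fm}(R/J))=e+\bigl(\sigma(\fa)-(s-1)\beg(I/\fa)\bigr)$ also requires showing that the coefficients of the polynomial $Q(t)=1+\sum_{j}(-1)^{j+1}P_j(t)$ (where $P_j$ records the generator degrees of $N_j[\sigma]$) vanish for every exponent strictly between $\sigma(\fa)-(s-1)\beg(I/\fa)$ and $\sigma(\fa)-(s-1)i_r$, not merely at the distinguished degrees $\sigma(\fa)-(s-1)i_{r-j}$. Those intermediate exponents receive contributions from mixed choices of $i$'s and dropped $c$'s, so the vanishing there is not the ``same combinatorial non-vanishing'' as in Theorem~\ref{T31}(iii); either a more extensive family of binomial identities, or a change of generators identifying the matching degree-$i_r$ pieces of $\fa$ and $I$ so as to reduce genuinely to a smaller determinantal problem, is needed to close this. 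Since the paper leaves exactly this gap unstated, your plan is faithful to it, but the claim that ``nothing new is required beyond careful bookkeeping'' understates the work.
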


\subsection*{Acknowledgment}
This work was done as a part of the author's Ph.D. thesis as a Cotutelle program between Iran and France. I would like to thank my advisor Professor Marc Chardin, for suggesting the problem, for his enormous help and for being so supportive.


\end{document}